\documentclass[12pt,a4paper]{article}
\usepackage{german,theorem,latexsym,amsfonts}
\usepackage{amssymb,amsmath}
\usepackage{epsfig}
\usepackage{color}
\usepackage[all]{xy}
\setlength{\parindent}{0pt}
\setlength{\parskip}{3pt}
\selectlanguage{\USenglish}

\theoremstyle{change}
\theorembodyfont{\rm}
\newtheorem{prop}{Proposition:}[section]
\newtheorem{coro}[prop]{Corollary:}
\newtheorem{defi}[prop]{Definition:}
\newtheorem{exa}[prop]{Example:}
\newtheorem{exa_n}[prop]{Examples:}
\newtheorem{nr}[prop]{}

\newtheorem{theo}[prop]{Theorem:}
\newenvironment{rema}{\par\noindent\textbf{Remark:}}{}
\newenvironment{proof}{\par\noindent\textbf{Proof:}}{\hfill\ensuremath{\square}}

\def\colim{\mathop{\rm colim}}
\def\hocolim{\mathop{\rm hocolim}}

\def\id{\mathop{\rm id}}
\def\Supp{\mathop{\rm Supp}}
\def\mC{\mathbb{C}}
\def\Top{\mathop{\rm Top}}
\def\Mon{\mathop{\rm Mon}}
\def\op{\mathop{\rm op}}
\def\M{\widehat{M}}

\begin{document}
\title{\textbf{Numerably Contractible Spaces}}
\author{\textbf{Eugenia Schwamberger and Rainer Vogt}}
\date{\textbf{24. 10. 2008}}
\maketitle


\begin{abstract}
Numerably contractible spaces play an important role in the theory of homotopy
pushouts and pullbacks. The corresponding results imply that
a number of well known weak homotopy equivalences are
genuine ones if numerably contractible spaces are involved. 
In this paper we give a first systematic investigation of numerably contractible spaces.
We list the elementary properties of the category of these spaces.
We then study simplicial objects in this category. In particular, we show that the topological 
realization functor preseves fibration sequences if the base is path-connected 
and numerably contractible
in each dimension. Consequently, the loop space functor commutes with
realization up to homotopy. Finally, we give simple conditions which assure that
free algebras over a topological operad are numerably contractible.
\end{abstract}
\section{Introduction}
A \textit{numerably contractible space} is a topological space $X$ which admits a
numerable cover by sets $U\subset X$ for which the inclusions are nullhomotopic.
Numerably contractible spaces are of importance in homotopy theory, as we will
explain.

Some important weak homotopy equivalence are strict ones if the spaces involved are
numerably contractible. Let $k\Top^\ast$ denote the category of based $k$-spaces. For
$X$ in $k\Top^\ast$ let $JX$ denote the James construction on $X$ in $k\Top^\ast$,
i.e. the based free topological monoid on $X$. In \cite[(17.3)]{DKP} and \cite[Cor.
3.4]{Pup1} D. Puppe proved:

\begin{theo}\label{1_1}
If $X$ is $h$-wellpointed, path-connected and numerably contractible, then
$JX\simeq\Omega\Sigma X$.
\end{theo}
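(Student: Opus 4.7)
The plan is to construct the canonical comparison map $j : JX \to \Omega\Sigma X$ and to prove it is a genuine homotopy equivalence, separating the argument into three stages.

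First, I would construct $j$. The adjoint of $\mathrm{id}_{\Sigma X}$ is a based map $\eta : X \to \Omega\Sigma X$ sending $x$ to the loop $t \mapsto [x,t]$. After replacing $\Omega\Sigma X$ by its Moore loop space, if needed, to secure strict associativity, the universal property of $JX$ as the free based topological monoid on $X$ produces a unique continuous monoid homomorphism $j : JX \to \Omega\Sigma X$ extending $\eta$.

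Second, I would show that $j$ is a weak homotopy equivalence. This is James' classical theorem, in which the path-connectedness of $X$ enters essentially. A convenient route is to apply a two-sided bar or Dold--Lashof construction to the monoid $JX$, producing a quasi-fibration with contractible total space and fibre $JX$, whose base is identified with $\Sigma X$ by inspection of the filtration whose successive quotients are the smash powers $X^{\wedge n}$. Comparison with the path-loop fibration over $\Sigma X$ through $j$ gives an isomorphism on all homotopy groups.

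Third, I would promote the weak equivalence to a genuine homotopy equivalence, using the remaining hypotheses. The $h$-wellpointedness of $X$ makes each inclusion $J_{n-1}X \hookrightarrow J_n X$ a cofibration up to homotopy, so $JX$ is presented as a well-behaved sequential colimit of iterated product/smash-type pieces built from $X$. Numerable contractibility is preserved by the operations involved (finite products, quotients along good cofibrations, and suitable sequential colimits), so $JX$ inherits numerable contractibility from $X$. One then argues that $j$ is a homotopy equivalence by invoking a Dold-type principle: a weak equivalence between spaces that are suitably numerably contractible is a genuine homotopy equivalence. The main obstacle I expect is exactly this third step, and specifically the passage from weak to strong equivalence, because numerable contractibility does not transfer naively through the loop-space functor; the cleanest way around this is to argue inside a quasi-fibration whose base $\Sigma X$ is directly numerably contractible, so that $j$ itself supplies the bridge between $JX$ and $\Omega\Sigma X$ without requiring one to produce numerable covers of $\Omega\Sigma X$ independently.
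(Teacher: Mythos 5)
The paper does not prove Theorem \ref{1_1} itself; it quotes it from tom Dieck--Kamps--Puppe and Puppe \cite{DKP,Pup1} and only remarks that ``comparatively simple proofs based on Propositions \ref{1_4} to \ref{1_6}'' exist. So your proposal has to be judged against that intended line of argument. Your first two stages are fine as sketches: the comparison map via the Moore loop space and the universal property of the free monoid is standard, and the identification of $j$ as a weak equivalence via a bar/Dold--Lashof quasifibration with fibre $JX$ over $B(JX)\simeq\widehat{\Sigma}X$ (using path-connectedness so that $JX$ is grouplike) is the classical James argument.

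The genuine gap is in your third stage. The ``Dold-type principle'' you invoke --- that a weak equivalence between numerably contractible spaces is a genuine homotopy equivalence --- is false, and the paper itself exhibits a counterexample: Corollary \ref{6_2} gives a weak equivalence between Dold spaces (a $CW$-approximation of the Dold space $\widehat{\Sigma}Y$ of Proposition \ref{6_1}) that is not a homotopy equivalence. So establishing that $JX$ and $\Omega\Sigma X$ are both Dold spaces (which is true, by \ref{5_4} and \ref{4_13}) does nothing by itself to upgrade a weak equivalence to a genuine one. Your proposed escape --- ``argue inside a quasi-fibration whose base $\Sigma X$ is numerably contractible'' --- does not close the gap either: Dold's Theorem \ref{1_3} requires both projections to have the weak covering homotopy property, and the James-type total space over $\Sigma X$ is in general only a quasifibration, for which the comparison between the actual fibre and the homotopy fibre is again only a weak equivalence. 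What actually carries the Dold hypothesis into the conclusion is the homotopy-pullback/pushout calculus over Dold bases: one writes $\Sigma X$ as the homotopy pushout of $\ast\leftarrow X\to\ast$, uses the cube theorem (Proposition \ref{1_7}) together with Propositions \ref{1_4}--\ref{1_6} to compute the homotopy fibre of $\ast\to\Sigma X$ as an iterated join/telescope built from the James filtration by \emph{genuine} equivalences, and only there do the hypotheses that $\Sigma X$ (and the intermediate stages) are Dold spaces enter. Without some argument of this kind, your step from ``weak'' to ``genuine'' is unsupported.
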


For $X$ in $k\Top^\ast$ let $\mC^\ast_n(X)$ denote the based free algebra over the operad
$\mathcal{C}_n$ of little $n$-cubes. P. May constructed a
weak equivalence $\mC^\ast_n(X)\to\Omega^n\Sigma^nX$ for a path-connected $X$ \cite{May}. 
In his thesis H. Meiwes
proved \cite{Meiwes}.

\begin{theo}\label{1_2}
If $X$ is as in Theorem \ref{1_1}, then May's map
$\mC^\ast_n(X)\to\Omega^n\Sigma^nX$ is a genuine homotopy equivalence.
\end{theo}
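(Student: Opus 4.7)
The plan is to upgrade May's weak equivalence $\mathcal{C}^\ast_n(X)\to\Omega^n\Sigma^n X$ to a genuine homotopy equivalence via Whitehead's theorem, by showing that both sides have the homotopy type of a CW complex. In the framework of the paper, this CW-type property is to follow from numerable contractibility together with the $h$-wellpointedness hypothesis, so the task reduces to verifying that both $\mathcal{C}^\ast_n(X)$ and $\Omega^n\Sigma^n X$ are numerably contractible under our assumptions on $X$.

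For the source, I would appeal directly to the final section of the paper, which (as announced in the abstract) gives simple conditions under which free algebras over a topological operad are numerably contractible. The hypotheses on $X$ ($h$-wellpointed, path-connected, numerably contractible) are chosen precisely so that these criteria apply to the free $\mathcal{C}_n$-algebra $\mathcal{C}^\ast_n(X)$, giving numerable contractibility on the left-hand side for free.

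For the target, the iterated suspension $\Sigma^n X$ is numerably contractible by inspection: for any $Y$, the two open cones form a numerable cover of $\Sigma Y$ by contractible sets, with an explicit partition of unity given by a function of the suspension coordinate, and this iterates in $n$. To transfer numerable contractibility from $\Sigma^n X$ to $\Omega^n\Sigma^n X$, I would exploit the paper's theorem that topological realization preserves fibration sequences when the base is path-connected and numerably contractible in each degree, whose corollary is that $\Omega$ commutes with realization up to homotopy. Concretely, I would model $\Sigma^n X$ as the realization of an auxiliary simplicial space with numerably contractible degrees and path-connected zeroth degree (for instance a bar-type construction on $\Sigma^{n-1}X$), apply the corollary to present $\Omega\Sigma^n X$ as the realization of a simplicial space whose degrees are numerably contractible, and conclude numerable contractibility of that realization from the closure properties of the category developed in the paper's elementary section. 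An iteration of this argument $n$ times then yields numerable contractibility of $\Omega^n\Sigma^n X$.

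The main obstacle is carrying out the iteration on the loop side: at each stage one must exhibit a simplicial model whose degrees remain numerably contractible and whose zeroth degree remains path-connected, so that the realization-fibration theorem is applicable repeatedly without degrading the hypotheses. Once numerable contractibility of both sides is in place, Whitehead's theorem — applied together with the paper's principle that $h$-wellpointed, numerably contractible spaces have CW homotopy type — promotes May's weak equivalence to a genuine homotopy equivalence, proving the theorem.
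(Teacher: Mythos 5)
The paper does not actually prove Theorem \ref{1_2}: it is quoted from Meiwes's thesis \cite{Meiwes}, and the paper only re-derives one ingredient of that proof, namely that $\mathbb{C}^\ast_n(X)$ is a Dold space (Proposition \ref{5_4}). So your proposal can only be measured against the intended strategy, and against that it has a fatal gap in its final step.

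You invoke ``the paper's principle that $h$-wellpointed, numerably contractible spaces have CW homotopy type'' and then apply Whitehead's theorem. No such principle exists, and the paper proves the opposite: Proposition \ref{6_1} exhibits a compact, path-connected Dold space, $\widehat{\Sigma}Y$ with $Y=\{1/n\}\cup\{0\}$, which is not of the homotopy type of a CW-complex, and Corollary \ref{6_2} shows that a weak homotopy equivalence between Dold spaces need not be a genuine one (a CW approximation $R\widehat{\Sigma}Y\to\widehat{\Sigma}Y$ is the counterexample). Hence even after you establish that both $\mathbb{C}^\ast_n(X)$ and $\Omega^n\Sigma^nX$ are Dold spaces, Whitehead's theorem is unavailable, and Dold-ness of source and target does not by itself upgrade May's weak equivalence. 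The whole point of the Dold-space machinery, as the Puppe quotation in the introduction stresses, is to obtain genuine equivalences \emph{without} CW hypotheses, by a different mechanism: Dold's fibration theorem (Theorem \ref{1_3}) and its consequences (Propositions \ref{1_4}--\ref{1_6}), which require one to present the map as a map of fibrations or homotopy pullbacks over a Dold base and check it fibrewise (in Meiwes's setting, by induction over the filtration of $\mathbb{C}^\ast_n(X)$ and May's approximation quasifibrations). Your intermediate steps are not wasted --- Dold-ness of $\mathbb{C}^\ast_n(X)$ is exactly Proposition \ref{5_4}, and Dold-ness of $\Omega^n\Sigma^nX$ can be extracted by iterating Propositions \ref{4_12} and \ref{4_13}, with the care you note, since loop spaces of Dold spaces are not automatically Dold (Example \ref{6_3}) --- but these are inputs to a Dold-type fibrewise argument, not to Whitehead's theorem.
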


In the context of these theorems we share D. Puppe's point of view \cite{Pup1}: 
``Frequently a weak
homotopy equivalence is considered as good as a genuine one, because for spaces having
the homotopy type of a $CW$-complex there is no difference and most interesting 
spaces in algebraic topology are of that kind. I am not going to argue against this 
because I agree with it, but I do think that the methods by which we establish
the genuine homotopy equivalences give some new insight into homotopy theory.''

Indeed, constructing homotopy equivalences between spaces which are not
necessarily of the homotopy type of $CW$-complexes deprives one of the algebraic side
of homotopy theory, so that these constructions have a different, more geometric
flavor.

We do not know whether A. Dold introduced the notion of a numerably contractible
space, but he was certainly among the first ones to work with them. Following J.
Smrekar \cite{Smrekar}, we therefore also call such a space a Dold space. In his
paper ``Partitions of Unity in the Theory of Fibrations'' Dold proved
\cite[Thm. 6.3]{Dold}:

\begin{theo}\label{1_3}
Given a commutative diagram
$$
\xymatrix{
E\ar[rr]^f\ar[rd]_p && E' \ar[ld]^{p'}
\\
& B
}
$$
such that $p$ and $p'$ have the weak covering homotopy property and $B$ is a Dold space
then $f$ is a fiberwise homotopy equivalence iff its restriction to
every fiber is a homotopy equivalence.
\end{theo}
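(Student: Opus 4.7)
My plan is to mimic the standard structure of Dold's original proof: a local trivialization argument over sets that are contractible in $B$, followed by a patching argument that exploits the numerability of the cover coming from the Dold-space hypothesis.

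The ``only if'' direction is immediate, since a fiberwise homotopy equivalence restricts on each fiber to a homotopy equivalence. For the substantive direction, assume each restriction $f_b \colon p^{-1}(b) \to p'^{-1}(b)$ is a homotopy equivalence, and choose a numerable cover $\{U_\alpha\}$ of $B$ by sets $U_\alpha \subset B$ whose inclusions are nullhomotopic in $B$; this is possible precisely because $B$ is a Dold space.

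The local step is to show that for each $\alpha$ the restriction $f\colon p^{-1}(U_\alpha) \to p'^{-1}(U_\alpha)$ is a fiberwise homotopy equivalence over $U_\alpha$. I would do this by picking, for each $\alpha$, a homotopy $h_\alpha \colon U_\alpha \times I \to B$ with $h_{\alpha,0}$ the inclusion and $h_{\alpha,1}$ the constant map at some point $b_\alpha \in B$. Applying the WCHP to $p$ and $p'$ along $h_\alpha$ and feeding in the identity of $p^{-1}(U_\alpha)$ (respectively $p'^{-1}(U_\alpha)$) at time $0$, one obtains fiber homotopy equivalences between $p|_{U_\alpha}$, $p'|_{U_\alpha}$ and the pulled-back trivial fibrations $U_\alpha \times p^{-1}(b_\alpha) \to U_\alpha$ and $U_\alpha \times p'^{-1}(b_\alpha) \to U_\alpha$. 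Under these trivializations the hypothesis that the fiber map $f_{b_\alpha}$ is an ordinary homotopy equivalence immediately yields a fiberwise homotopy inverse $g_\alpha$ together with fiberwise homotopies $g_\alpha f \simeq \id$ and $f g_\alpha \simeq \id$ over $U_\alpha$.

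The global step, which I expect to be the main obstacle, is to assemble the local inverses $g_\alpha$ into a single fiberwise homotopy inverse $g$ defined over all of $B$, together with global fiberwise homotopies. Here I would use the partition of unity $\{\varphi_\alpha\}$ subordinate to $\{U_\alpha\}$ together with a well-ordering of the indices in order to perform an inductive patching: starting from some $g_\alpha$, at each stage one uses the WCHP of $p'$ (applied to the difference of two candidate inverses, measured by a homotopy parameter built out of $\varphi_\alpha$) to alter the current global candidate on the next patch so that it agrees, up to a vertical homotopy, with the local inverse there. The numerability of the cover ensures that at each point of $B$ only finitely many indices contribute, so the inductive construction converges pointwise and yields a continuous global inverse; the homotopies $gf \simeq \id$, $fg \simeq \id$ are produced by the same bookkeeping. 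This patching argument is where all of the delicate use of the WCHP and of the partition of unity $\{\varphi_\alpha\}$ is concentrated, and it is essentially the content of Dold's original Theorem 6.3.
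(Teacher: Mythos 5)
The paper itself gives no proof of Theorem \ref{1_3}: it is quoted verbatim from Dold's paper with the citation [Thm.\ 6.3], so there is nothing internal to compare your argument against. Judged on its own, your outline does follow the strategy of Dold's original proof --- reduce to a local statement over the members of a Dold cover, where the WCHP trivializes both maps up to fiber homotopy, and then globalize using numerability. Two points, however, keep this from being a proof rather than a plan. First, in the local step, after trivializing $p$ and $p'$ over $U_\alpha$ the map $f$ becomes some fiberwise map $U_\alpha\times F\to U_\alpha\times F'$, and it is not automatic that this is $\id\times f_{b_\alpha}$; a fiberwise map of trivial fibrations all of whose fiber restrictions are homotopy equivalences is \emph{not} in general a fiber homotopy equivalence, so you must argue that the two trivializations can be chosen compatibly with $f$ up to vertical homotopy (this follows from the uniqueness, up to vertical homotopy, of WCHP-lifts of the contracting homotopy: lift it in $E$, push forward by $f$, and compare with the lift in $E'$ starting at $f$). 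Your ``immediately yields'' hides exactly this.

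Second, and more seriously, the global patching step is where the entire content of the theorem lives, and your description of it is an appeal to the result you are trying to prove: you write that the patching ``is essentially the content of Dold's original Theorem 6.3,'' which is circular. What is actually needed is Dold's separate local-to-global principle (his Theorem 6.1): a map over $B$ between spaces with the WCHP which is a fiber homotopy equivalence over each member of a numerable cover is a fiber homotopy equivalence. Dold proves this not by the naive transfinite averaging of local inverses you sketch --- the local homotopies $g_\alpha f\simeq\id$ and $g_\beta f\simeq\id$ need not be compatible on overlaps, and interpolating them via the partition of unity requires his machinery of halos and the Section Extension Theorem applied to a mapping-cylinder reformulation of ``fiber homotopy equivalence.'' If you want a complete argument you should either carry out that section-extension argument or cite Theorem 6.1 as a black box and make the local step rigorous as above; as written, the hard half of the proof is asserted rather than proved.
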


As simple consequences of this result one has the following strengthened versions of
well-known results about homotopy pullbacks (for simplicity we state the results for
commutative squares. They also hold for homotopy commutative squares with a
specified homotopy.)

\begin{prop}\label{1_4}
Let
$$
\xymatrix{
X_1 \ar[r]^u\ar[d]_f & Y_1 \ar[d]^g
\\
X_0 \ar[r]^v & Y_0
}
$$
be a homotopy pullback. If $v$ is a homotopy equivalence, so is $u$.
Conversely, if $u$ is a homotopy equivalence, $g$ induces a surjection of
sets of path-components, and $Y_0$ is a Dold space, then $v$ is a homotopy
equivalence.
\end{prop}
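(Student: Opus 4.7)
Part (1) can be established without invoking Theorem \ref{1_3}. The plan is to use homotopy invariance of the homotopy pullback in the input cospan: a map of cospans all three of whose components are homotopy equivalences induces a homotopy equivalence on homotopy pullbacks. Consider the map of cospans given by $(v, \mathrm{id}_{Y_0}, \mathrm{id}_{Y_1})$ from $X_0 \xrightarrow{v} Y_0 \xleftarrow{g} Y_1$ to $Y_0 \xrightarrow{\mathrm{id}} Y_0 \xleftarrow{g} Y_1$. By hypothesis on $v$, every component is a homotopy equivalence, so the induced map on homotopy pullbacks is a homotopy equivalence. The target homotopy pullback is canonically $Y_1$ (with its projection being the identity), and under the equivalence $X_1 \simeq \mathrm{hoPull}(v,g)$ the induced map is $u$; hence $u$ is a homotopy equivalence.

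For part (2), I would apply Theorem \ref{1_3}. Replace $v$ by a Hurewicz fibration $\tilde v : \tilde X_0 \to Y_0$ through the mapping path space, so that $X_0 \hookrightarrow \tilde X_0$ is a homotopy equivalence and it suffices to show $\tilde v$ is a homotopy equivalence. Apply Theorem \ref{1_3} to the triangle over $Y_0$ with $p = \tilde v$, $p' = \mathrm{id}_{Y_0}$ and $f = \tilde v$: both legs are Hurewicz fibrations, and $Y_0$ is a Dold space. The restriction of $f$ to the fibre $\tilde v^{-1}(y_0)$ is the constant map to $y_0$, which is a homotopy equivalence iff that fibre is contractible, so the theorem reduces the claim to the contractibility of every fibre of $\tilde v$; the resulting fibrewise equivalence over $\mathrm{id}_{Y_0}$ then yields a homotopy inverse of $\tilde v$.

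To verify the fibre condition, observe that $\tilde v^{-1}(y_0)$ is a model for the homotopy fibre $\mathrm{hofib}_{y_0}(v)$, and the homotopy pullback property supplies natural homotopy equivalences $\mathrm{hofib}_{g(y_1)}(v) \simeq \mathrm{hofib}_{y_1}(u)$ for every $y_1 \in Y_1$. Since $u$ is a homotopy equivalence, these homotopy fibres are contractible. The relation $gu = vf$ combined with a homotopy inverse to $u$ shows that $g$ factors through $v$ up to homotopy, whence $\pi_0$-surjectivity of $g$ passes to $v$, and hence to $\tilde v$; in particular, every path component of $Y_0$ meets $g(Y_1)$. Because $\tilde v$ is a Hurewicz fibration, fibres over points in the same path component of $Y_0$ are homotopy equivalent, so the contractibility observed on $g(Y_1)$ propagates across the entire base. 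The main obstacle is the coordination of these two pieces: the $\pi_0$-surjectivity hypothesis on $g$ is precisely what guarantees that $\tilde v$ has no empty fibres, over which Theorem \ref{1_3} would fail, while the Hurewicz fibration property of $\tilde v$ is what spreads contractibility within each path component.
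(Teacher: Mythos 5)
Your proposal is correct and follows the route the paper itself indicates: the paper offers no written-out proof, merely declaring Proposition \ref{1_4} a simple consequence of Dold's Theorem \ref{1_3}, and your part (2) is exactly that deduction (fibration replacement of $v$, comparison with $\id_{Y_0}$ over the Dold base $Y_0$, with the $\pi_0$-surjectivity of $g$ used to rule out empty fibres and to spread contractibility of the homotopy fibres of $u$ across all of $Y_0$), while part (1) is the standard homotopy invariance of homotopy pullbacks. No gaps.
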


\begin{prop}\label{1_5}
Given a commutative diagram,
$$
\xymatrix{\
X_2 \ar[r]^{f'} \ar[d]^w \ar @{} [dr] |{\textrm{I}}&
X_1 \ar[r]^f \ar[d]^v \ar @{} [dr] |{\textrm{II}} &
X_0 \ar[d]^u
\\
Y_2 \ar[r]^{g'} &
Y_1 \ar[r]^g &
Y_0
}
$$
\begin{enumerate}
\item suppose that II is a homotopy pullback. Then I is a homotopy pullback iff the
combined square I+II is a homotopy pullback
\item suppose that I and I+II are homotopy pullbacks, that $g'$ induces a surjection
of sets of path-components and $Y_1$ is a Dold space, then II is a
homotopy pullback.
\end{enumerate}
\end{prop}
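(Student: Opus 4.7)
The plan is to prove (1) by the standard pasting lemma for homotopy pullbacks (no Dold hypothesis needed) and to prove (2) by combining (1) with a fiber-level argument that invokes Theorem \ref{1_3}. Set $P:=Y_1\times^h_{Y_0}X_0$ with canonical comparison $h\colon X_1\to P$. The central formal fact behind (1) is the natural homotopy equivalence $Y_2\times^h_{Y_0}X_0\simeq Y_2\times^h_{Y_1}P$, which holds unconditionally. Under the assumption that II is a homotopy pullback, $h$ is a homotopy equivalence, so the induced map $Y_2\times^h_{Y_1}X_1\to Y_2\times^h_{Y_1}P$ is one as well, since the homotopy pullback construction respects homotopy equivalences of its inputs. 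The two comparison maps $X_2\to Y_2\times^h_{Y_1}X_1$ (for I) and $X_2\to Y_2\times^h_{Y_0}X_0$ (for I+II) thus agree up to a homotopy equivalence of their targets, and one is a homotopy equivalence iff the other is.

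For (2) I would first deduce from I and I+II being homotopy pullbacks, together with the pasting identification above, that the induced map $\widehat h\colon Y_2\times^h_{Y_1}X_1\to Y_2\times^h_{Y_1}P$ is a homotopy equivalence: both comparison maps out of $X_2$ are equivalences and factor through $\widehat h$, so this follows by $2$-out-of-$3$. The task is then to descend from $\widehat h$ being an equivalence to $h$ being one, and here the $Y_1$ Dold hypothesis must enter. I would replace $u$ (and, if necessary, $v$) by a Hurewicz fibration via the mapping path space construction, so that $P$ is modelled as the strict pullback $Y_1\times_{Y_0}X_0$ and both $v\colon X_1\to Y_1$ and the projection $\pi\colon P\to Y_1$ become Hurewicz fibrations over the Dold space $Y_1$. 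Then $h$ is a map of fibrations over $Y_1$, and Theorem \ref{1_3} reduces the assertion that $h$ is a fiberwise, hence ordinary, homotopy equivalence to showing that $h$ restricts to a homotopy equivalence on every fiber.

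To verify this fiber condition, fix $y_2\in Y_2$ and set $y:=g'(y_2)$. The hypothesis that I is a homotopy pullback identifies the homotopy fiber of $w\colon X_2\to Y_2$ over $y_2$ with $v^{-1}(y)$, while the hypothesis that I+II is a homotopy pullback identifies the same homotopy fiber with $\pi^{-1}(y)=u^{-1}(g(y))$; by naturality of the comparison maps, the two identifications are intertwined by $h|_{v^{-1}(y)}$, which is therefore a homotopy equivalence. Since $g'$ surjects on $\pi_0(Y_1)$, every path-component of $Y_1$ contains such a $y$; and for any two fibrations over $Y_1$, fiber transport along a path in $Y_1$ commutes up to homotopy with a map between them, so the set of points of $Y_1$ over which $h$ is a homotopy equivalence on fibers is a union of path-components, hence all of $Y_1$. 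Theorem \ref{1_3} then yields the conclusion. The main technical point is the naturality bookkeeping that ensures the two fiber identifications coming from I and I+II genuinely compose to $h|_{v^{-1}(y)}$ and that this survives the replacement of $u$ and $v$ by Hurewicz fibrations; once these identifications are in place, the fiber-level application of Theorem \ref{1_3} is essentially immediate.
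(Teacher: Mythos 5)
Your proof is correct and follows exactly the route the paper intends: the paper gives no written argument for Propositions \ref{1_4}--\ref{1_7}, asserting only that they are ``simple consequences'' of Dold's Theorem \ref{1_3}. Your treatment --- the formal pasting identification for (1), and for (2) the replacement of $u$ and $v$ by fibrations so that the comparison map becomes a map of fibrations over the Dold space $Y_1$, checked to be a fiberwise equivalence over the image of $g'$ and propagated to all of $Y_1$ by fiber transport and $\pi_0$-surjectivity before invoking Theorem \ref{1_3} --- is precisely that reduction.
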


\begin{prop}\label{1_6}
Let
$$
\xymatrix{
X_1 \ar[r]^f\ar[d]_u & X_0 \ar[d]^v
\\
Y_1 \ar[r]_g & Y_0
}
$$
be a commutative square and $F(f,x)$ the homotopy fiber of $f$ over $x\in X_0$.
\begin{enumerate}
\item If the square is a homotopy pullback, then the induced map 
$$
F(f,x)\to F(g,v(x))
$$
is a homotopy equivalence for each $x\in X_0$.
\item If for each $x\in X_0$ the map
 $F(f,x)\to F(g,v(x))$ is a homotopy equivalence and
$X_0$ is a Dold space, the square is a homotopy pullback.
\end{enumerate}
\end{prop}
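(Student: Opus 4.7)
The plan for part (1) is to apply the pasting lemma for homotopy pullbacks, i.e.\ Proposition \ref{1_5}(1). By its very construction, the homotopy fiber $F(f,x)$ fits into a standard homotopy pullback square
$$
\xymatrix{
F(f,x) \ar[r] \ar[d] & X_1 \ar[d]^f \\
\ast \ar[r]^{x} & X_0,
}
$$
and I would paste this onto the left of the given square, which is a homotopy pullback by hypothesis. By Proposition \ref{1_5}(1) the outer combined square, with corners $F(f,x), Y_1, \ast, Y_0$ and bottom edge picking out $v(x)$, is itself a homotopy pullback. But this outer square is precisely (up to the canonical comparison) the defining homotopy pullback square of $F(g,v(x))$, so the universal comparison to the homotopy fiber over $v(x)$ is a homotopy equivalence; by inspection it is the map asserted in the proposition.

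For part (2) the plan is to reduce to Theorem \ref{1_3}. First, I would replace $f\colon X_1\to X_0$ by the mapping path fibration $p\colon P(f)\to X_0$, so that $X_1\to P(f)$ is a homotopy equivalence, $p$ is a Hurewicz fibration, and each fiber $p^{-1}(x)$ is naturally identified with the standard model of $F(f,x)$. Similarly, I would construct the homotopy pullback $W=Y_1\times^h_{Y_0}X_0$ via a mapping path model so that the projection $q\colon W\to X_0$ is a Hurewicz fibration with $q^{-1}(x)$ naturally identified with $F(g,v(x))$. The given commutative square induces a comparison map $\Phi\colon P(f)\to W$ of spaces over $X_0$ whose restriction to the fiber over each $x$ agrees, under these identifications, with the hypothesis map $F(f,x)\to F(g,v(x))$, hence is a homotopy equivalence for every $x\in X_0$. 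Since $X_0$ is a Dold space and both $p$ and $q$ have the weak covering homotopy property (being Hurewicz fibrations), Theorem \ref{1_3} yields that $\Phi$ is a fiberwise homotopy equivalence, in particular a homotopy equivalence. Composing with $X_1\simeq P(f)$, the canonical comparison $X_1\to W$ is then a homotopy equivalence, which is precisely the statement that the square is a homotopy pullback.

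The main obstacle I anticipate is bookkeeping rather than conceptual: one must choose models for $P(f)$ and for $W$ carefully enough that the fibers over $x$ coincide, not merely up to homotopy but as specific spaces, with the constructions $F(f,x)$ and $F(g,v(x))$ used in the statement, and so that the induced map on fibers literally \emph{is} the comparison map that the hypothesis refers to. Without such a careful identification one cannot feed the fiberwise hypothesis into Theorem \ref{1_3}. Once this is in place, part (2) is a direct application of Dold's theorem and part (1) is no more than the pasting lemma already established in Proposition \ref{1_5}.
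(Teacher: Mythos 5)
Your proof is correct and is essentially the argument the paper intends: Proposition \ref{1_6} is stated in the introduction without a written-out proof, as one of the ``simple consequences'' of Dold's Theorem \ref{1_3}, and your part (2) is exactly that reduction (mapping path fibrations over the Dold base $X_0$, then fiberwise comparison via Theorem \ref{1_3}), while part (1) via pasting with the defining square of $F(f,x)$ using Proposition \ref{1_5}(1) is the standard derivation. No gaps; the bookkeeping point you flag about choosing the mapping-path models so the fibers are literally $F(f,x)$ and $F(g,v(x))$ is real but routine.
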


We also have the following improved version of M. Mather's second cube theorem
\cite{Mather}.

\begin{prop}\label{1_7}
Given a commutative cube diagram whose vertical faces are homotopy pullback,
$$
\xymatrix@=1.5ex{
A_0 \ar[rrrr]\ar[dd] \ar[drr] &&&&
A_1  \ar[dd]|!{[d];[d]}\hole \ar[drr]
\\
&&
A_2 \ar[rrrr]\ar[dd] &&&&
A_3 \ar[dd]^f
\\
B_0 \ar[rrrr]|!{[rr];[rr]}\hole \ar[drr]  &&&&
B_1 \ar[drr] 
\\
&&
B_2 \ar[rrrr] &&&&
B_3
}
$$
then
\begin{enumerate}
\item the top face is a homotopy pushout if the buttom face is a homotopy pushout.
\item the bottom face is a homotopy pushout if the top face is a homotopy pushout,
$f$ induces a surjection on path-components, and $B_3$ is a Dold space.
\end{enumerate}
\end{prop}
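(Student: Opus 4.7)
The strategy is to prove both parts simultaneously by pulling the homotopy pushout of the bottom face back along $f:A_3\to B_3$ and then invoking Proposition \ref{1_4}.

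Let $P$ denote the homotopy pushout of the bottom face, with canonical comparison map $\pi:P\to B_3$, and let $Q$ denote the homotopy pushout of the top face, with comparison map $\sigma:Q\to A_3$. Assertion (1) says that $\sigma$ is a homotopy equivalence whenever $\pi$ is; assertion (2) is the converse under the Dold and $\pi_0$-surjectivity hypotheses.

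The first step is to use the four vertical faces being homotopy pullbacks to build canonical equivalences $A_i\simeq B_i\times^h_{B_3}A_3$ for $i=0,1,2$, compatible with the structure maps to $A_3$; these are obtained by composing the vertical pullback squares along the edges of the bottom face. Set $P':=P\times^h_{B_3}A_3$. The second step is to invoke Mather's first cube theorem (homotopy pullback commutes with homotopy pushout in $\Top$) to obtain a canonical equivalence
$$Q\;\simeq\;A_1\sqcup^h_{A_0}A_2\;\simeq\;(B_1\sqcup^h_{B_0}B_2)\times^h_{B_3}A_3\;\simeq\;P',$$
under which $\sigma$ corresponds to the pullback $P'\to A_3$ of $\pi$ along $f$. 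Thus $\sigma$ is a homotopy equivalence if and only if $P'\to A_3$ is.

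With this in hand, both assertions follow from Proposition \ref{1_4} applied to the homotopy pullback square
$$\xymatrix{P'\ar[r]\ar[d] & A_3\ar[d]^f \\ P\ar[r]^\pi & B_3.}$$
For (1), $\pi$ is a homotopy equivalence, so the easy direction of Proposition \ref{1_4} gives that $P'\to A_3$, hence $\sigma$, is one. For (2), $\sigma$, hence $P'\to A_3$, is a homotopy equivalence; since $f$ is $\pi_0$-surjective and $B_3$ is a Dold space, the converse direction of Proposition \ref{1_4} yields that $\pi$ is a homotopy equivalence.

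The step I expect to be the main obstacle is establishing the canonical identification $Q\simeq P'$ together with the matching of the two comparison maps to $A_3$. This requires Mather's first cube theorem combined with the $A_i\simeq B_i\times^h_{B_3}A_3$ identifications from the vertical pullback hypothesis, and a careful diagram chase to see that the induced map $P'\to A_3$ really is $\sigma$ under the equivalence. Once this compatibility is set up, Proposition \ref{1_4} uniformly takes care of both halves of the statement.
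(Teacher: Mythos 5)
Your argument is correct, but there is nothing in the paper to compare it against: Propositions \ref{1_4}--\ref{1_7} are stated in the introduction without proof, presented as ``simple consequences'' of Dold's Theorem \ref{1_3}. Your route --- identify the homotopy pushout $Q$ of the top face with $P'=P\times^h_{B_3}A_3$ via the compatibility of homotopy pushouts with homotopy pullbacks, then feed the homotopy pullback square relating $P'\to A_3$ and $\pi:P\to B_3$ into Proposition \ref{1_4} --- is the natural one, and it correctly localizes all the Dold-space input in the converse half of \ref{1_4} (whose own proof ultimately rests on Theorem \ref{1_3}). The bookkeeping you flag as the main obstacle is genuinely where the work lies: you need the pasted vertical faces to give $A_i\simeq B_i\times^h_{B_3}A_3$ compatibly with the maps to $A_3$ (Proposition \ref{1_5}(1) handles the pasting), and you need the induced map $P'\to A_3$ to agree with $\sigma$ up to homotopy under the equivalence $Q\simeq P'$. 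One caveat worth making explicit: the statement ``homotopy pullback commutes with homotopy pushout'' that you invoke is not an auxiliary lemma but is precisely Mather's first cube theorem, i.e.\ it is equivalent to assertion (1) itself. So your proof of (1) is circular unless you treat Mather's first cube theorem as an external known result --- which is legitimate here, since the paper cites \cite{Mather} and only claims novelty for the Dold-space strengthening in (2). For (2) your argument is sound and non-circular: it uses the (known) first cube theorem plus the converse direction of Proposition \ref{1_4}, and the hypotheses that $f$ be $\pi_0$-surjective and $B_3$ be a Dold space enter exactly where Proposition \ref{1_4} requires them.
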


Homotopy pushouts and pullbacks have become increasingly important tools in
homotopy theory and homological algebra. E.g. there
exist comparatively simple proofs of Theorem \ref{1_1} based on
Propositions \ref{1_4} to \ref{1_6} (unpublished). 
Another example is the following result of G. Allaud \cite{Allaud}, which
is an immediate consequece of Propositions \ref{1_4} and \ref{1_5}.

\begin{prop}\label{1_8}
Let $f:X\to Y$ be a based map of path-connected Dold spaces such
that $\Omega f: \Omega X\to\Omega Y$ is a homotopy equivalence. Then $f$ is a
homotopy equivalence.
\end{prop}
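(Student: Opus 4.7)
The plan is to compare $f$ with the based path--loop fibration data for $X$ and $Y$, exploiting that the based path spaces are contractible and that the fiber behaviour of $f$ is controlled by $\Omega f$. Concretely, I work with the commutative square
\[
\xymatrix{
PX \ar[r]^{Pf} \ar[d]_{p_X} & PY \ar[d]^{p_Y} \\
X \ar[r]_f & Y
}
\]
where $PX$, $PY$ are the contractible based path spaces and $p_X$, $p_Y$ are the (Hurewicz) endpoint evaluations, with fibers $\Omega X$ and $\Omega Y$ over the basepoints.

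The main step will be to show that this square is a homotopy pullback. To do this I apply Proposition \ref{1_6}(2), reading the square with $p_X$ and $p_Y$ as the horizontal arrows, so that the top-right corner is $X$. The fiber $F(p_X,x)$ is the space of paths in $X$ from the basepoint to $x$; path-connectedness of $X$ supplies, via concatenation with a chosen base path, a homotopy equivalence $F(p_X,x)\simeq\Omega X$, and similarly $F(p_Y,f(x))\simeq\Omega Y$. Under these identifications the induced map of homotopy fibers is, up to homotopy, $\Omega f$, which is a homotopy equivalence by hypothesis. Since $X$ is a Dold space, Proposition \ref{1_6}(2) then yields that the square is a homotopy pullback.

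Once the square is known to be a homotopy pullback, I finish by invoking the converse direction of Proposition \ref{1_4} in the original orientation of the square: the top map $Pf$ is a homotopy equivalence (both $PX$ and $PY$ are contractible); the right map $p_Y$ is surjective on $\pi_0$ (since $PY$ is path-connected and $Y$ is path-connected by hypothesis); and $Y$ is a Dold space by hypothesis. Proposition \ref{1_4} therefore concludes that $f$ is a homotopy equivalence.

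The main technical point is the identification of the induced map of homotopy fibers with $\Omega f$ in the first step; this is exactly where path-connectedness of $X$ and $Y$ is used, to permit a compatible choice of base paths in $X$ and in $Y$. Once that identification is in place, the rest is a direct invocation of the foundational Propositions \ref{1_4} and \ref{1_6}.
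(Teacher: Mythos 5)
Your proof is correct. The paper itself gives no written-out argument --- it only asserts that the result ``is an immediate consequence of Propositions \ref{1_4} and \ref{1_5}'' --- and your route replaces \ref{1_5} by \ref{1_6}, so it is worth recording the difference. The intended pasting argument via \ref{1_5} runs the comparison $\Omega X \to PX \to PY$ over $\ast \to X \to Y$: the left square and the composite square are homotopy pullbacks (the composite because the induced map of homotopy fibers over the basepoint is $\Omega f$), and \ref{1_5}(2), using that $\ast\to X$ is $\pi_0$-surjective and $X$ is Dold, yields that the right square $PX\to PY$ over $X\to Y$ is a homotopy pullback; one then finishes with \ref{1_4} exactly as you do. Your use of \ref{1_6}(2) reaches the same intermediate conclusion but requires verifying the fiber condition over \emph{every} $x\in X$, not just the basepoint, which is why you need the base-path concatenation bookkeeping; you handle this correctly (note only path-connectedness of $X$ is really needed there, since the base path in $Y$ can be taken to be the $f$-image of the one in $X$; path-connectedness of $Y$ enters only in the final $\pi_0$-surjectivity of $p_Y$). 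The trade-off is that the \ref{1_5} route is slightly more economical, while yours makes the role of the hypothesis on $\Omega f$ more visibly pointwise. Both are legitimate one-step deductions from the foundational propositions, and your final application of \ref{1_4} is exactly as intended.
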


So we feel that it is time for
a more systematic investigation of Dold spaces.

In Section 2 we will recall the definition of Dold spaces and some facts about 
numerable covers. In Section 3 we will list a number of elementary facts about
Dold spaces. Section 4 is the main part of the paper: We will study properties of
simplicial Dold spaces and their realizations. We give a characterization of
wellpointed connected Dold spaces and use it to derive results about the 
realization of maps of simplicial spaces which are dimensionwise fibrations.
In Section 5 we apply these results to free algebras over topological
operads. We close the paper with a section on counter examples.

Some of our results are well-known, some are known to specialists but have
not appeared in print, some are new. We derive most of the well-known facts
as special cases of more general results. We have tried to give references as
well as possible, but we are not sure that we always found the original source. 

We are indepted to A. Hatcher for bringing Example \ref{6_1} to our attention
and to J. Smrekar for suggesting the name ``Dold space'' and for
e-mail exchange about function space properties of Dold spaces. The latter
turned out to be so retrictive that we did not include them in this paper.
In fact, the category of Dold spaces is rather badly behaved with respect
to function spaces. In particular, loop spaces of Dold spaces need not be 
Dold spaces (see Example \ref{6_3}).

\section{Dold covers}
In this section we recall the basic definitions and list results related to
coverings.

Let $\{a_j; j\in J\}$ denote a set of elements of $\mathbb{R}_+=\{x\in\mathbb{R};
x\ge 0\}$. We define
$$
\sum\limits_{j\in J} a_j=\sup\left\{\sum\limits_{j\in E} a_j;\ \ E \subset J \textrm{
finite}\right\}
$$

\begin{defi}\label{2_1}
A \textit{partition of unity} on a space $X$ is a set of maps $\{f_j:X\to[0,1]; j\in
J\}$ such that
$$
\sum\limits_{j\in J}f_j(x)=1\quad \textrm{ for all } x\in X.
$$
\end{defi}

\begin{defi}\label{2_2}
Let $X$ be a space. A subset $A\subset X$ is called \textit{ambiently contractible}
if the inclusion $A\to X$ is nullhomotopic.
\end{defi}

\begin{defi}\label{2_3}
Let $\mathcal{U}=\{U_\alpha; \alpha\in A\}$ be a cover of $X$.
\begin{enumerate}
\item $\mathcal{U}$ is called \textit{locally finite} if each $x\in X$ has a
neighborhood $V$ such that $V\cap U_\alpha\neq\emptyset$ for only finitely many
$\alpha\in A$.
\item A \textit{numeration of} $\mathcal{U}$ is a partition of unity $\{f_\alpha;
\alpha\in A\}$ of $X$ such that $\{\Supp (f_\alpha); \alpha\in A\}$ is locally
finite and $\Supp (f_\alpha)\subset U_\alpha$ for all $\alpha\in A$. (Recall, the
support $\Supp(f)$ of a map $:X\to I$ is the closure of $f^{-1}(]0,1])$.)
If $\mathcal{U}$ admits a numeration, it is called a \textit{numerable cover}.
\item $\mathcal{U}$ is called an \textit{ambiently contractible cover} if each
$U_\alpha$ is ambiently contractible.
\item $\mathcal{U}$ is called a \textit{Dold cover} if it is numerable and
ambiently contractible.
\end{enumerate}
\end{defi}

\begin{defi}\label{2_4}
A space $X$ is called \textit{ambiently locally contractible} if is has a ambiently
contractible open cover. We call $X$ \textit{numerably contractible} or \textit{Dold
space} if it has a Dold cover.
\end{defi}
\begin{rema}\label{2_4a}
In the literature the term ``weakly contractible'' is used for what we
call ``ambiently contractible''. 
\end{rema}
\begin{exa_n}\label{2_5}
\begin{enumerate}
\item Each discrete space is a Dold space.
\item Each contractible space is a Dold space.
\item Each paracompact ambiently locally contractible space is a Dold space.
\item By \cite[Thm. II. 3]{DE} each paracompact $LEC$ space is a Dold space.
\item Each $CW$-complex is a Dold space (see \ref{3_8}). 
The converse does not hold (see \ref{6_1}).
\end{enumerate}
\end{exa_n}

We will make use of the following results.

\begin{nr}\label{2_6} \textbf{Lemma} \cite[p. 347]{tD}:
If $\{f_j,j\in J\}$ is a partition of unity on $X$, then $\{f^{-1}_j(]0,1]); j\in
j\}$ is a numerable cover of $X$.
\end{nr}

\begin{nr}\label{2_7} \textbf{Lemma} \cite[p. 349]{tD}:
Let $\{f_j:X\to\mathbb{R}_+; j\in J\}$ be a set of maps such that
$\mathcal{U}=\{f^{-1}(]0,\infty[); j\in J\}$ is a locally finite cover of $X$, then
$\mathcal{U}$ is a numerable cover.
\end{nr}

\begin{coro}\label{2_8}
The following are equivalent
\begin{enumerate}
\item $X$ is a Dold space
\item $X$ has a partition of unity $\{f_j; j\in J\}$ such that $\{f^{-1}_j(]0,1]);
j\in J\}$ is an ambiently contractible open cover of $X$.
\item there is a set of maps $\{f_j:X\to\mathbb{R}_+; j\in J\}$ such that
$\{f^{-1}_j(]0,\infty[); j\in J\}$ is a locally finite ambiently contractible cover of
$X$.
\end{enumerate}
\end{coro}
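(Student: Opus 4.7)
The plan is to establish the equivalences by proving the two cycles $(1)\Rightarrow(2)\Rightarrow(1)$ and $(1)\Rightarrow(3)\Rightarrow(1)$, exploiting Lemmas \ref{2_6} and \ref{2_7}, together with the elementary observation that ambient contractibility is inherited by subsets: if $A\subset B\subset X$ and the inclusion $B\hookrightarrow X$ is nullhomotopic, then so is $A\hookrightarrow X$, because it factors through $B\hookrightarrow X$.

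First I would prove $(1)\Rightarrow(2)$. Given a Dold cover $\mathcal{U}=\{U_\alpha;\alpha\in A\}$ of $X$, pick a numeration $\{f_\alpha;\alpha\in A\}$. Since $f_\alpha^{-1}(]0,1])\subset\Supp(f_\alpha)\subset U_\alpha$ and $U_\alpha$ is ambiently contractible, so is $f_\alpha^{-1}(]0,1])$ by the observation above; moreover $f_\alpha^{-1}(]0,1])$ is open and these sets cover $X$ because $\sum_\alpha f_\alpha(x)=1$ forces at least one summand to be positive at every point. For $(2)\Rightarrow(1)$, apply Lemma \ref{2_6} directly: the partition of unity $\{f_j\}$ makes $\{f_j^{-1}(]0,1]);j\in J\}$ a numerable cover, and by hypothesis this cover is ambiently contractible, hence a Dold cover.

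Next I would prove $(1)\Rightarrow(3)$. From the numeration $\{f_\alpha\}$ above, regarded as maps $X\to\mathbb{R}_+$, we have $f_\alpha^{-1}(]0,\infty[)=f_\alpha^{-1}(]0,1])\subset U_\alpha$, so each such set is ambiently contractible. Local finiteness of $\{f_\alpha^{-1}(]0,\infty[)\}$ follows at once from the local finiteness of $\{\Supp(f_\alpha)\}$, which is built into the definition of a numeration. For $(3)\Rightarrow(1)$, apply Lemma \ref{2_7} to the given family $\{f_j:X\to\mathbb{R}_+\}$: since $\{f_j^{-1}(]0,\infty[)\}$ is a locally finite cover, it is automatically numerable, and it is ambiently contractible by assumption, so it is a Dold cover and $X$ is a Dold space.

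This closes both cycles. There is no real obstacle here; the only thing worth flagging is the repeated, almost reflexive use of the subset principle for ambient contractibility, which is what allows one to slide freely between the open sets $U_\alpha$, the supports $\Supp(f_\alpha)$, and the preimages $f_\alpha^{-1}(]0,1])$ or $f_\alpha^{-1}(]0,\infty[)$ used in the three formulations.
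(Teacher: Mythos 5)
Your proof is correct and follows exactly the route the paper intends: the statement is presented as a corollary of Lemmas \ref{2_6} and \ref{2_7}, and your two cycles $(1)\Rightarrow(2)\Rightarrow(1)$ and $(1)\Rightarrow(3)\Rightarrow(1)$ use precisely those lemmas together with the evident fact that ambient contractibility passes to subsets. Nothing is missing.
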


\begin{prop}\label{2_9}
Let $\mathcal{U}=\{U_\alpha; \alpha\in A\}$ be a cover of $X$ by Dold spaces.
Suppose that $\mathcal{U}$ has a numerable refinement $\mathcal{V}=\{V_j; j\in J\}$,
i.e. a numerable cover of $X$ such that each $V_j$ is contained in some $U_\alpha$.
Then $X$ is a Dold space.
\end{prop}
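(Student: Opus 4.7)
The plan is to build a Dold cover of $X$ by refining $\mathcal{V}$ using the Dold covers of the $U_\alpha$. Let $\{\varphi_j : X \to [0,1]\}$ be a numeration of $\mathcal{V}$ and, for each $j$, choose $\alpha(j)\in A$ with $\Supp\varphi_j \subset V_j \subset U_{\alpha(j)}$. By Corollary \ref{2_8} applied to each Dold space $U_\alpha$, pick a numeration $\{\psi^\alpha_\beta : U_\alpha \to [0,1]\}_{\beta\in B_\alpha}$ whose associated open cover $\{(\psi^\alpha_\beta)^{-1}(]0,1])\}$ is ambiently contractible in $U_\alpha$.

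For each pair $(j,\beta)$ with $\beta\in B_{\alpha(j)}$, I would define $f_{j,\beta}:X\to[0,1]$ by
$$
f_{j,\beta}(x) = \begin{cases} \varphi_j(x)\cdot\psi^{\alpha(j)}_\beta(x), & x\in U_{\alpha(j)},\\ 0, & x\notin U_{\alpha(j)}.\end{cases}
$$
Continuity follows from the open cover $X = U_{\alpha(j)}\cup (X\setminus\Supp\varphi_j)$: on $U_{\alpha(j)}$ the formula is a continuous product, on $X\setminus\Supp\varphi_j$ the function vanishes identically, and the two definitions agree on the overlap since $\varphi_j=0$ there. The containment $\Supp\varphi_j\subset U_{\alpha(j)}$ ensures these two opens actually cover $X$.

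Next I would check that $\{f_{j,\beta}^{-1}(]0,\infty[)\}$ is a locally finite ambiently contractible cover, so that Corollary \ref{2_8}(3) finishes the proof. It covers $X$: given $x$, choose $j$ with $\varphi_j(x)>0$; then $x\in V_j\subset U_{\alpha(j)}$, so some $\psi^{\alpha(j)}_\beta(x)>0$. Ambient contractibility: $f_{j,\beta}^{-1}(]0,1])\subset(\psi^{\alpha(j)}_\beta)^{-1}(]0,1])$, which is contained in an ambiently contractible subset of $U_{\alpha(j)}$; composing the nullhomotopy there with $U_{\alpha(j)}\hookrightarrow X$ gives a nullhomotopy in $X$.

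The main technical step is local finiteness. Given $x\in X$, local finiteness of $\{\Supp\varphi_j\}$ gives a neighborhood meeting only finitely many of these supports, so we only need to control $(j,\beta)$ for $j$ in a finite set $J_0$. For each $j\in J_0$ I would split into cases: if $x\in U_{\alpha(j)}$, then local finiteness of $\{\Supp\psi^{\alpha(j)}_\beta\}$ inside the open set $U_{\alpha(j)}$ supplies a neighborhood of $x$ in $X$ meeting only finitely many $\Supp\psi^{\alpha(j)}_\beta$; if $x\notin U_{\alpha(j)}$, then $x\notin\Supp\varphi_j$ and I pick a neighborhood disjoint from $\Supp\varphi_j$, on which every $f_{j,\beta}$ vanishes. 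Intersecting the finitely many resulting neighborhoods yields a neighborhood of $x$ meeting only finitely many sets $f_{j,\beta}^{-1}(]0,\infty[)$, and this case analysis is the one delicate point.
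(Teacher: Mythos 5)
Your construction is the same as the paper's: multiply the numeration of $\mathcal{V}$ by numerations of Dold covers of the $U_\alpha$ and check that the resulting family gives a locally finite, ambiently contractible cover (the paper verifies condition (2) of Corollary \ref{2_8} via a partition of unity, you verify condition (3); both are fine). The ambient contractibility and the covering property are argued correctly.

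There is, however, one genuine defect: you repeatedly treat $U_{\alpha(j)}$ as an \emph{open} subset of $X$, and nothing in the hypotheses gives you that. The $U_\alpha$ are arbitrary subspaces that happen to be Dold spaces --- indeed the interesting instances are closed covers, as in Example \ref{2_10}, where only the failure of numerability of $\mathcal{V}$ saves the statement from being false. Concretely, your continuity argument glues along ``the open cover $X=U_{\alpha(j)}\cup(X\setminus\Supp\varphi_j)$,'' which is not an open cover in general, so the gluing principle you invoke does not apply. The function $f_{j,\beta}$ is in fact still continuous, but for a different reason: either argue directly that at a point with $\varphi_j(x)>0$ the open set $\varphi_j^{-1}(]0,1])\subset\Supp\varphi_j\subset U_{\alpha(j)}$ is a neighborhood on which the product formula is visibly continuous, while at a point with $\varphi_j(x)=0$ one has $0\le f_{j,\beta}\le\varphi_j$; or do what the paper does and glue along the two \emph{closed} sets $\Supp\varphi_j$ and $X\setminus\varphi_j^{-1}(]0,1])$, defining the product only on $\Supp\varphi_j$. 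The same issue appears in your local finiteness step (``the open set $U_{\alpha(j)}$''): a neighborhood of $x$ in $U_{\alpha(j)}$ is only of the form $N\cap U_{\alpha(j)}$ with $N$ open in $X$; you then need the extra (easy) remark that since every $\Supp\psi^{\alpha(j)}_\beta$ is contained in $U_{\alpha(j)}$, the set $N$ meets $\Supp\psi^{\alpha(j)}_\beta$ if and only if $N\cap U_{\alpha(j)}$ does, so $N$ itself witnesses local finiteness in $X$. With these repairs your argument coincides with the paper's proof.
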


\begin{proof}
Let $\{f_j; j\in J\}$ be a numeration of $\mathcal{V}$. Since each $U_\alpha$ is a
Dold space there are partitions of unity
$$
\{g_{\alpha,k}:U_\alpha\to I; k\in K_\alpha\}
$$
such that $\{\Supp(g_{\alpha,k}); k\in K_\alpha\}$ is locally finite and
$\Supp(g_{\alpha,k})$ is contractible in $U_\alpha$ and hence in $X$ for all $k\in
K_\alpha$.

Choose a function $\beta:J\to A$ such that $V_j \subset U_{\beta(j)}$.
For $k\in K_{\beta(j)}$ define $f_{j,k}: X\to [0,1]$ by
$$
f_{j,k}(x)=\left\{
\begin{array}{ll}
f_j(x)\cdot g_{\beta(j),k}(x) \quad & \textrm{ for } x\in\Supp(f_j)\\
0 & \textrm{ for } x\in X\backslash f^{-1}_j(]0,1])
\end{array}
\right.
$$
Then $f_{j,k}$ is well-defined and continuous, because $\Supp(f_j)$ and $X\backslash
f^{-1}_j(]0,1])$ are closed in $X$. The collection $\{f_{j,k}; j\in J, k\in
K_{\beta(j)}\}$ is a partition of unity and
$$
f^{-1}_{j,k}(]0,1])=f^{-1}_j(]0,1])\cap g_{\beta(j),k}(]0,1])
\subset\Supp(g_{\beta(j),k}).
$$
Hence $f^{-1}_{j,k}(]0,1])$ is contractible in $X$. Now apply \ref{2_8}(2).
\end{proof}
\vspace{2ex}

\begin{rema}
The numeration condition on the cover $\mathcal{U}$ in \ref{2_9} is essential as the
following example shows.
\end{rema}

\begin{exa_n}\label{2_10}
Let $X\subset\mathbb{R}^2$ be the cone on $M=\{(0,0)\}\cup\{(\frac{1}{n},0);
n\in\mathbb{N}\backslash\{0\}\}$ with cone point $(0,1)$. Then $X$ is a Dold space.
Now let
$$
Y=(X\sqcup X)/(0,0)\sim (0,0).
$$
The two copies of $X$ form a closed cover of $Y$, but $Y$ is not a Dold space,
because no open neighborhood of $(0,0)$ is contractible in $Y$.
\end{exa_n}

\section{Elementary properties}
Suppose $U\subset X$ is ambiently contractible in $X$ to a point $x_0$, then $U$
must lie in the path-component of $x_0$. We obtain

\begin{prop}\label{3_1}
(1) $X$ is a Dold space iff its path-components are open and Dold spaces.\\
(2) If $X=\coprod_{j\in J} X_j$, then $X$ is a Dold space iff each summand
$X_j$ is a Dold space.
\end{prop}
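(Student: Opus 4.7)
The plan is to prove (2) first in both directions, then derive (1) by combining it with the observation that any ambiently contractible open cover must consist of sets lying in single path-components. The key input, stated in the paragraph just before the proposition, is that an ambiently contractible subset $U\subset X$ is confined to a single path-component; moreover any nullhomotopy of such a $U$ in $X$ has path-connected image and therefore restricts to a nullhomotopy within that path-component.

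For $(2)\Rightarrow$ I start from an ambiently contractible open cover $\{U_\alpha\}$ of $X$ together with a numeration $\{f_\alpha\}$, both supplied by \ref{2_8}(2). Each $U_\alpha$, and \emph{a fortiori} each $\Supp(f_\alpha)$, lies in a single summand $X_{j(\alpha)}$. Grouping indices by $j$ and restricting, the functions $\{f_\alpha : j(\alpha)=j\}$ form a partition of unity on $X_j$ (the remaining $f_\alpha$ vanish there) with supports in ambiently contractible open subsets of $X_j$, so \ref{2_8}(2) makes $X_j$ a Dold space. For $(2)\Leftarrow$ I pick numerations of Dold covers of each $X_j$ and extend each function by zero outside $X_j$; since $X_j$ is clopen in the coproduct topology these extensions are continuous, supports are unchanged, local finiteness is preserved (a neighborhood of $x\in X_k$ inside $X_k$ meets only supports coming from the $X_k$-numeration), and ambient contractibility in $X_j$ trivially survives in $X$.

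Part (1) then follows quickly. For $\Rightarrow$, \ref{2_8}(2) supplies an ambiently contractible open cover of $X$, each member of which lies in a single path-component, so the path-components are unions of these open sets and therefore open (hence clopen); thus $X$ is the topological coproduct of its path-components and $(2)\Rightarrow$ makes each of them a Dold space. For $\Leftarrow$, openness of the path-components again realizes $X$ as the topological coproduct of its path-components, and $(2)\Leftarrow$ delivers Dold-ness of $X$. I do not foresee any genuine obstacle: the substantive content is the path-component observation just above the proposition, and the rest is routine bookkeeping --- continuity of the zero-extended functions, local finiteness, and transfer of nullhomotopies --- all immediate once summands and open path-components are recognised as clopen.
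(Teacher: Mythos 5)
Your proof is correct and follows exactly the route the paper intends: the paper gives no explicit proof, only the preceding observation that an ambiently contractible subset lies in a single path-component, and your argument is precisely that observation combined with the routine clopen-summand bookkeeping (restriction and zero-extension of partitions of unity via \ref{2_8}(2)).
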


This observation allows us to restrict our attention to path-connected Dold spaces.

\begin{prop}\label{3_2}
A space $Y$ dominated by a Dold space $X$ is itself a Dold space.
\end{prop}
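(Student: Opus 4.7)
The plan is to pull back a numeration of a Dold cover of $X$ along the domination map and use the homotopy $r\circ i\simeq\mathrm{id}_Y$ to transport ambient contractibility from $X$ to $Y$. I would work with the characterization \ref{2_8}(2): it suffices to produce a partition of unity on $Y$ whose positivity sets are ambiently contractible in $Y$.

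Concretely, fix maps $i:Y\to X$ and $r:X\to Y$ with $r\circ i\simeq\mathrm{id}_Y$. By \ref{2_8}(2), choose a partition of unity $\{f_\alpha;\alpha\in A\}$ on $X$ whose positivity sets $U_\alpha=f_\alpha^{-1}(]0,1])$ form an ambiently contractible open cover of $X$ (in particular $\{\Supp(f_\alpha)\}$ is locally finite). Define $g_\alpha=f_\alpha\circ i:Y\to[0,1]$. Since $\sum_\alpha f_\alpha(i(y))=1$, the family $\{g_\alpha\}$ is a partition of unity on $Y$. Moreover $\Supp(g_\alpha)\subset i^{-1}(\Supp(f_\alpha))$: indeed $g_\alpha^{-1}(]0,1])=i^{-1}(U_\alpha)$, and taking closures uses continuity of $i$ and that $i^{-1}(\Supp(f_\alpha))$ is closed. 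Local finiteness of $\{\Supp(g_\alpha)\}$ then follows: given $y\in Y$, pick a neighborhood $W$ of $i(y)$ in $X$ meeting only finitely many $\Supp(f_\alpha)$, and take $i^{-1}(W)$.

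The key step is verifying that each $V_\alpha:=g_\alpha^{-1}(]0,1])=i^{-1}(U_\alpha)$ is ambiently contractible in $Y$. Let $\iota:V_\alpha\hookrightarrow Y$ be the inclusion. Since $r\circ i\simeq\mathrm{id}_Y$, we have $\iota\simeq r\circ i\circ\iota$. But $i\circ\iota$ factors through $U_\alpha\subset X$, and the inclusion $U_\alpha\hookrightarrow X$ is nullhomotopic by hypothesis; hence $i\circ\iota:V_\alpha\to X$ is nullhomotopic, and composing with $r$ shows $r\circ i\circ\iota:V_\alpha\to Y$ is nullhomotopic. Therefore $\iota$ itself is nullhomotopic, so $V_\alpha$ is ambiently contractible in $Y$. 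Applying \ref{2_8}(2) finishes the proof.

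There is no real obstacle here; the only point that requires a little care is the support/closure bookkeeping for $g_\alpha$, which goes through because continuous preimages of closed sets are closed. The conceptual heart is the one-line observation that ambient contractibility is preserved under the composition $V_\alpha\xrightarrow{i}U_\alpha\hookrightarrow X\xrightarrow{r}Y$ thanks to the domination homotopy.
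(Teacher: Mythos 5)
Your proof is correct and follows essentially the same route as the paper: pull back a Dold cover (equivalently, its numeration) along the map $i:Y\to X$, and observe that each $i^{-1}(U_\alpha)\hookrightarrow Y$ is homotopic, via the domination homotopy, to a map factoring through the nullhomotopic inclusion $U_\alpha\subset X$. The paper simply states that the pulled-back cover is numerable, whereas you verify the partition-of-unity and local-finiteness bookkeeping explicitly; the substance is identical.
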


\begin{proof} (\cite[p. 235]{DKP}
Let $\{V_\lambda; \lambda\in\Lambda\}$ be a Dold cover of $X$, and $f:X\to Y$ and
$g:Y\to X$ be maps such that $f\circ g\simeq\id_Y$. Then $\{g^{-1}(V_\lambda);
\lambda\in\Lambda\}$ is a numerable cover of $Y$ and each $g^{-1}(V_\lambda)$ is
contractible in $Y$ because
$$
\xymatrix{
g^{-1}(V_\lambda )\ar[r]^g & V_\lambda\subset X \ar[r]^(0.6)f & Y
}
$$
is nullhomotopic and homotopic to the inclusion $g^{-1}(V_\lambda)\subset Y$.
\end{proof}

\begin{coro}\label{3_3}
If $X$ and $Y$ are homotopy equivalent then $X$ is a Dold space iff $Y$ is a Dold
space.
\end{coro}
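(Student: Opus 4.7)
The plan is to derive this as an immediate corollary of Proposition \ref{3_2}. Recall that a homotopy equivalence between $X$ and $Y$ consists of maps $f : X \to Y$ and $g : Y \to X$ with $f \circ g \simeq \id_Y$ and $g \circ f \simeq \id_X$. By definition of domination, the first relation says that $Y$ is dominated by $X$, while the second (after swapping roles of $f$ and $g$) says that $X$ is dominated by $Y$.

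Hence I would simply observe the symmetric situation: if $X$ is a Dold space, Proposition \ref{3_2} applied with $(X, Y, f, g)$ yields that $Y$ is a Dold space; conversely, if $Y$ is a Dold space, Proposition \ref{3_2} applied with the roles interchanged (i.e.\ with $(Y, X, g, f)$) yields that $X$ is a Dold space. There is no genuine obstacle here, since all the work has been done in the previous proposition; the only thing to check is that the notion of domination used in Proposition \ref{3_2} is implied by homotopy equivalence, which is immediate from the defining data.
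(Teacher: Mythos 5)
Your proposal is correct and is exactly the intended argument: the paper states this as an immediate corollary of Proposition \ref{3_2}, giving no further proof, and your observation that a homotopy equivalence exhibits each space as dominated by the other is precisely what makes it immediate.
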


\begin{prop}\label{3_4}
Given a diagram
$$
\xymatrix{
X & A \ar[l]_f\ar[r]^g & Y
}
$$
with $X$ and $Y$ Dold spaces, then the double mapping cylinder $\M(f,g)$ is a Dold
space.
\end{prop}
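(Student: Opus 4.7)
The plan is to build a two-element numerable cover of $\widehat{M}(f,g)$ by open subspaces each of which is itself a Dold space, and then quote Proposition \ref{2_9}.

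First I would write $\widehat{M}(f,g)$ in the standard way as the quotient of $X\sqcup (A\times[0,1])\sqcup Y$ by the relations $(a,0)\sim f(a)$ and $(a,1)\sim g(a)$, and define the "height" map $u:\widehat{M}(f,g)\to[0,1]$ by $u|_X=0$, $u(a,t)=t$, $u|_Y=1$. This is well-defined and continuous by the universal property of the quotient. Now set
$$
X' \;=\; u^{-1}\bigl([0,\tfrac{2}{3})\bigr)
\;=\; X\cup (A\times[0,\tfrac{2}{3})),
\qquad
Y' \;=\; u^{-1}\bigl((\tfrac{1}{3},1]\bigr)
\;=\; Y\cup(A\times(\tfrac{1}{3},1]),
$$
so that $\{X',Y'\}$ is an open cover of $\widehat{M}(f,g)$.

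Next I would verify that $X'$ and $Y'$ are Dold spaces. The obvious linear collapse of the collar, $H:X'\times[0,1]\to X'$ defined by $H(x,s)=x$ for $x\in X$ and $H((a,t),s)=(a,(1-s)t)$ on the cylinder part (using $(a,0)\sim f(a)$), is a strong deformation retraction of $X'$ onto $X$; in particular $X'\simeq X$, and symmetrically $Y'\simeq Y$. By Corollary \ref{3_3}, $X'$ and $Y'$ are Dold spaces.

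It remains to numerate $\{X',Y'\}$. Pick a partition of unity $\{\alpha,\beta\}$ on $[0,1]$ with $\operatorname{Supp}(\alpha)\subset[0,\tfrac{2}{3})$ and $\operatorname{Supp}(\beta)\subset(\tfrac{1}{3},1]$; then $\{\alpha\circ u,\beta\circ u\}$ is a numeration of the cover $\{X',Y'\}$, so this cover is numerable. Applying Proposition \ref{2_9} to the cover $\mathcal{U}=\{X',Y'\}$ (with itself as numerable refinement) gives that $\widehat{M}(f,g)$ is a Dold space.

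The only mildly technical step is the continuity of $u$ and of the deformation retraction $H$, which follow formally from the quotient topology on the double mapping cylinder; no point of the argument requires any hypothesis on $f,g$ beyond continuity, so I expect no real obstacle.
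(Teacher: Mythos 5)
Your proof is correct and is essentially the paper's own argument: the paper also uses the height function (there called $\alpha$, with $\beta=1-\alpha$ as the second element of the partition of unity) to produce a two-element numerable cover by collar neighborhoods deformation retracting onto $X$ and $Y$, and then invokes Proposition \ref{2_9}. The only cosmetic difference is that the paper takes the sets $\alpha^{-1}(]0,1])$ and $\beta^{-1}(]0,1])$ directly rather than shrinking to $u^{-1}([0,\tfrac{2}{3}))$ and $u^{-1}((\tfrac{1}{3},1])$.
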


\begin{proof}
$\M(f,g)=(X\sqcup A\times[0,1]\sqcup Y)/\sim$ with $(a,0)\sim f(a)$ and $(a,1)\sim
g(a)$.

Define $\alpha:\M(f,g)\to [0,1]$ by
$$
\alpha(z)=\left\{
\begin{array}{ll}
0 & z\in X\\
t & z=(a,t)\in A\times[0,1]\\
1 & z\in Y
\end{array}\right.
$$
and $\beta: \M(f,g)\to [0,1]$ by $\beta(z)=1-\alpha(z)$. Then $\{\alpha,\beta\}$ is a
numerable cover, $\alpha^{-1}(]0,1])\simeq Y$ and $\beta^{-1}(]0,1])\simeq X$. Hence
$\M(f,g)$ is a Dold space by \ref{2_9}.
\end{proof}

Recall that a map $f:A\to X$ is an \textit{$h$-cofibration} if there is a
commutative triangle
$$
\xymatrix{
& A\ar[ld]_f\ar[rd]^j\\
X \ar[rr]^h && Y
}
$$
with $j$ a cofibration and $h$ a homotopy equivalence under $A$. Dually, an
\textit{$h$-fibration} is a map $f:A\to X$ which is homotopy equivalent
over $X$ to a fibration.

\begin{coro}\label{3_5}
Let 
$$
\xymatrix{
A\ar[r]^f\ar[d]_g & B\ar[d]\\
C\ar[r] & X
}
$$
be a pushout square with $f$ an $h$-cofibration and $B$ and $C$ Dold spaces. Then
$X$ is a Dold space.
\end{coro}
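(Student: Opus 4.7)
The strategy is to realise $X$ as homotopy equivalent to a double mapping cylinder of the form handled by Proposition \ref{3_4} and then quote Corollary \ref{3_3}. The $h$-cofibration hypothesis on $f$ is the crucial ingredient that lets us carry out this reduction.

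First, using the definition of $h$-cofibration, I would factor $f:A\to B$ as $A\xrightarrow{j}B'\xrightarrow{h}B$ with $j$ a genuine cofibration and $h$ a homotopy equivalence under $A$. The space $B'$ is homotopy equivalent to $B$, hence a Dold space by Corollary \ref{3_3}. Form the pushout $X'=B'\cup_{A}C$ of $B'\xleftarrow{j}A\xrightarrow{g}C$. Because $h$ is a homotopy equivalence under $A$ and $j$ is a cofibration, the canonical comparison map $X'\to X$ is a homotopy equivalence — this is an instance of the gluing lemma for pushouts along cofibrations.

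Next, for the honest cofibration $j$, the standard collapse map $\M(j,g)\to X'$ from the double mapping cylinder to the strict pushout is a homotopy equivalence. Proposition \ref{3_4} applied to the diagram $B'\leftarrow A\xrightarrow{g} C$, whose outer vertices $B'$ and $C$ are both Dold spaces, shows that $\M(j,g)$ is a Dold space. Composing the two homotopy equivalences gives $X\simeq \M(j,g)$, and a second application of Corollary \ref{3_3} finishes the proof.

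The main obstacle is the bookkeeping in the first step: one must verify that the pushout is genuinely invariant under replacing $f$ by an honest cofibration to which it is $A$-equivalent, which is where the $h$-cofibration hypothesis is consumed. Once this reduction is in place, everything reduces to the already-established Proposition \ref{3_4} and the homotopy invariance of the Dold property recorded in Corollary \ref{3_3}.
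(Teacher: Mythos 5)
Your proof is correct and takes essentially the same route as the paper, which simply observes in one line that the canonical map $\M(f,g)\to X$ is a homotopy equivalence because $f$ is an $h$-cofibration, and then applies Proposition \ref{3_4} and Corollary \ref{3_3}; your argument just unfolds the definition of $h$-cofibration to justify that one line. (A small caveat: the comparison $X'\to X$ is most cleanly justified by noting that $h$, its homotopy inverse, and the relevant homotopies are all under $A$ and hence descend to the pushouts, rather than by appeal to the gluing lemma, whose standard form assumes both attaching maps are cofibrations.)
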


\begin{proof}
Since $f$ is an $h$-cofibration, the canonical map $\widehat{M}(f,g)\to X$ is a homotopy
equivalence.
\end{proof}

\begin{coro}\label{3_6}
\begin{enumerate}
\item The unreduced suspension $\widehat{\Sigma} X$ 
of any space $X$ is a Dold space.
\item \cite[Lemma 1.3]{Pup1} For any map $f:A\to X$ into a Dold space $X$, 
the unreduced mapping cone is a Dold space.
\item If $f:A\to X$ is an $h$-cofibration and $X$ a Dold space, then $X/f(A)$ is a Dold space.
\end{enumerate}
\end{coro}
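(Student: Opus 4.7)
The plan is to observe that each of the three statements is essentially an application of Proposition \ref{3_4} (for double mapping cylinders) or Corollary \ref{3_5} (for pushouts along $h$-cofibrations), together with the fact that the one-point space is contractible and hence a Dold space by Example \ref{2_5}(2).

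For (1), I would realize the unreduced suspension $\widehat{\Sigma} X$ as the double mapping cylinder $\widehat{M}(!,!)$ of the diagram
$$
\xymatrix{
\ast & X \ar[l]_{!} \ar[r]^{!} & \ast
}
$$
where $!$ denotes the unique map to a point. Since a point is a Dold space, Proposition \ref{3_4} applies and gives the conclusion.

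For (2), I would identify the unreduced mapping cone of $f:A\to X$ with the double mapping cylinder $\widehat{M}(!,f)$ of
$$
\xymatrix{
\ast & A \ar[l]_{!} \ar[r]^{f} & X
}
$$
Again, both $\ast$ and $X$ are Dold spaces, so Proposition \ref{3_4} finishes the job.

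For (3), I would describe $X/f(A)$ as the pushout of
$$
\xymatrix{
X & A\ar[l]_f \ar[r]^{!} & \ast
}
$$
Since $f$ is an $h$-cofibration and both $X$ and $\ast$ are Dold spaces, Corollary \ref{3_5} applies and yields that $X/f(A)$ is a Dold space.

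None of the three items presents any real obstacle: the statements were presumably included precisely to illustrate how quickly the machinery of Proposition \ref{3_4} and Corollary \ref{3_5} pays off. The only thing that requires a moment of care is checking, in parts (1) and (2), that the relevant quotient spaces are literally the double mapping cylinders that appear in Proposition \ref{3_4}, which is immediate from the definition.
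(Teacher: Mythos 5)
Your proposal is correct and is exactly the argument the paper intends: Corollary \ref{3_6} is stated without proof precisely because each item follows immediately from Proposition \ref{3_4} (items (1) and (2), via the double mapping cylinders $\widehat{M}(!,!)$ and $\widehat{M}(!,f)$) and Corollary \ref{3_5} (item (3), via the pushout of $\ast \leftarrow A \to X$ along the $h$-cofibration $f$), together with the fact that a point is a Dold space.
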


\begin{prop}\label{3_7} \cite[Lemma 1.6]{Pup1}
Let $\xymatrix{X_0\ar[r]^{f_0}& X_1\ar[r]^{f_1}& X_2\ar[r]^{f_2}& \ldots}$ be a
sequence of maps of Dold spaces. Then
\begin{enumerate}
\item the mapping telescope $TX=(\coprod_{n\ge 0} X_n\times I)/\sim$ with $(x,1)\in
X_n\times I$ related to $(f_n(x),0)\in X_{n+1}\times I$ is a Dold space.
\item if  each $f_i$ is an h-cofibration, $\colim X_n$ is  a Dold space.
\end{enumerate}
\end{prop}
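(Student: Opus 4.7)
The plan is to prove part (1) by constructing an explicit numerable cover of the telescope $TX$ by Dold spaces and invoking Proposition \ref{2_9}; then part (2) will follow from (1) by the standard comparison between telescope and colimit under an h-cofibration hypothesis, combined with Corollary \ref{3_3}.

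For (1), I would introduce a global ``length coordinate'' $s:TX\to[0,\infty)$ by setting $s(x,t)=n+t$ on $X_n\times I$. The identification $(x,1)_n\sim(f_n(x),0)_{n+1}$ makes this well-defined and continuous since both sides give value $n+1$. Next, choose a standard tent partition of unity $\{\psi_n\}_{n\ge0}$ on $[0,\infty)$, where $\psi_n$ is supported on $[n-1,n+1]\cap[0,\infty)$ with peak $1$ at $n$, and set $\phi_n=\psi_n\circ s:TX\to[0,1]$. Then $\{\phi_n\}$ is a partition of unity with locally finite supports (each point meets at most two supports).

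The crucial step is to recognize the open sets $V_n:=\phi_n^{-1}(]0,1])$. For $n\ge1$, $V_n$ is the image in $TX$ of $X_{n-1}\times(0,1]\sqcup X_n\times[0,1)$ under the gluing determined by $f_{n-1}$, i.e.\ an ``open mapping cylinder'' of $f_{n-1}$. Sliding in the $t$-coordinate (pushing the $X_{n-1}$-half up to $t=1$ and the $X_n$-half down to $t=0$) gives an explicit deformation retraction $V_n\to X_n$; for $n=0$ one retracts $V_0=X_0\times[0,1)$ onto $X_0$. Since each $X_n$ is Dold, Corollary \ref{3_3} implies each $V_n$ is a Dold space. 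The cover $\{V_n\}$ is numerable (the $\phi_n$ are a numeration), so Proposition \ref{2_9} (taking $\mathcal{V}=\mathcal{U}$) gives that $TX$ is a Dold space.

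For (2), the canonical collapse map $p:TX\to\colim X_n$, which on $X_n\times I$ projects onto the image of $X_n$ in the colimit, is a homotopy equivalence whenever each $f_n$ is an h-cofibration; this is classical, as one may replace each $f_n$ up to homotopy equivalence by a genuine cofibration, for which the collapse is a deformation retract of successive mapping cylinders onto the ordinary union. Combined with part (1), Corollary \ref{3_3} then shows $\colim X_n$ is a Dold space.

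The main obstacle is the geometric bookkeeping in step (1): checking that the length coordinate respects the gluing, that $V_n$ really is a mapping-cylinder-like piece deformation retracting onto $X_n$, and that local finiteness survives. Once $\{V_n\}$ is identified as a numerable cover by Dold spaces, everything else is a direct appeal to \ref{2_9} and \ref{3_3}. In (2), the only subtlety is citing the telescope-vs-colimit homotopy equivalence in the h-cofibration case; this is standard and is not reproved here.
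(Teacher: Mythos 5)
Your argument is correct, but it takes a different route from the paper for part (1). The paper observes that $TX$ is literally a double mapping cylinder of the diagram $\coprod_{n\,\mathrm{even}}X_n\leftarrow\coprod_{n}X_n\rightarrow\coprod_{n\,\mathrm{odd}}X_n$ (with the maps being $\id$ or $f_n$ according to parity), and then simply quotes Proposition \ref{3_1} (coproducts of Dold spaces are Dold) and Proposition \ref{3_4} (double mapping cylinders of Dold spaces are Dold); your tent functions $\phi_n$ are in effect a refinement of the two-element cover $\{\alpha^{-1}(]0,1]),\beta^{-1}(]0,1])\}$ used in the proof of \ref{3_4}, obtained by grouping the even and odd tents. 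The paper's reduction is shorter and reuses existing results, while your construction is self-contained and makes the geometry of the cover explicit; both are legitimate. One small imprecision: the $\phi_n$ are not literally a numeration of $\{V_n\}$ in the sense of Definition \ref{2_3}(2), since $\Supp(\phi_n)=s^{-1}([n-1,n+1])$ is not contained in $V_n=s^{-1}(]n-1,n+1[)$; you should instead invoke Lemma \ref{2_6}, which says that for any partition of unity $\{\phi_n\}$ the cover $\{\phi_n^{-1}(]0,1])\}$ is numerable. With that citation the appeal to Proposition \ref{2_9} goes through, and part (2) is handled exactly as in the paper.
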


\begin{proof}
$TX$ is the double mapping cylinder of 
$$
\xymatrix{
\coprod\limits_{n\textrm{ even}} X_n
& \coprod\limits_{n\le 0}X_n \ar[l]_(0.4){(g_n)} \ar[r]^(0.4){(h_n)}
& \coprod\limits_{n\textrm{ odd}} X_n
}
$$
with $g_n(x)=\left\{
\begin{array}{ll}
x & n\textrm{ even}\\
f_n(x) & n\textrm{ odd}
\end{array}\right.
\qquad
h_n(x)
=\left\{
\begin{array}{ll}
f_n(x) & n\textrm{ even}\\
x & n\textrm{ odd}
\end{array}\right.
$

If all the $f_n$ are h-cofibrations, the canonical maps $TX\to \colim X_n$ is homotopy
equivalence.
\end{proof}

\begin{coro}\label{3_8} \cite[Prop. 6.7]{Dold}
Each $CW$-complex $X$ and hence each space of the homotopy type of $CW$-complex is a
Dold space.
\end{coro}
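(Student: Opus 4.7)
The plan is to build up an arbitrary CW complex skeleton by skeleton, using the results of Section 3 at each stage, and then deduce the general statement from Corollary \ref{3_3}.

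First I would handle the base case. The $0$-skeleton $X^{(0)}$ is discrete, hence a Dold space by Example \ref{2_5}(1). For the inductive step, suppose $X^{(n-1)}$ is a Dold space. The $n$-skeleton fits into a pushout square
$$
\xymatrix{
\coprod_\alpha S^{n-1} \ar[r] \ar[d] & X^{(n-1)} \ar[d] \\
\coprod_\alpha D^n \ar[r] & X^{(n)}
}
$$
where the left-hand vertical map is the disjoint union of the standard inclusions, which is a cofibration and in particular an $h$-cofibration. Each disk $D^n$ is contractible, hence a Dold space by Example \ref{2_5}(2), so by Proposition \ref{3_1}(2) the disjoint union $\coprod_\alpha D^n$ is also a Dold space. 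Now Corollary \ref{3_5} applies and gives that $X^{(n)}$ is a Dold space.

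For a general (not necessarily finite-dimensional) CW complex $X$, the skeleton inclusions $X^{(n-1)}\hookrightarrow X^{(n)}$ are cofibrations and $X=\colim_n X^{(n)}$ with the CW topology. Since each $X^{(n)}$ is a Dold space by the induction above, Proposition \ref{3_7}(2) shows that $X$ itself is a Dold space. Finally, if $Y$ has the homotopy type of a CW complex, then $Y$ is homotopy equivalent to some CW complex $X$, and Corollary \ref{3_3} finishes the proof.

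No step should present any real obstacle: the only things to verify are that the skeletal attaching maps are $h$-cofibrations and that the telescope-style colimit hypothesis of Proposition \ref{3_7}(2) applies, both of which are standard features of the CW construction. All the heavy lifting has already been done in Propositions \ref{3_1}, \ref{3_7} and Corollaries \ref{3_3}, \ref{3_5}.
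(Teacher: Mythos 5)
Your proof is correct and follows exactly the paper's argument: induction on skeleta via Corollary \ref{3_5}, passage to the colimit via Proposition \ref{3_7}(2), and Corollary \ref{3_3} for spaces of the homotopy type of a $CW$-complex. You simply spell out the details the paper leaves implicit.
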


\begin{proof}
Let $X^{(n)}$ denote the $n$-skeleton of $X$. Then $X^{(n)}$ is a Dold space by
induction on $n$ using \ref{3_5}. Hence $X$ is a Dold space by \ref{3_7}.2.
\end{proof}

\begin{prop}\label{3_9}
Let $p:E\to B$ be any map. Assume that $B$ and the homotopy fibers 
$F_b(p)$ of $p$ over $b$ are Dold spaces for all $b\in B$, 
then $E$ is a Dold space.
\end{prop}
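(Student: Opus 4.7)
My plan is to first replace $p$ by a Hurewicz fibration and then build a Dold cover of $E$ by refining a Dold cover of the base using Dold covers of the fibers, with the homotopy lifting property bridging the two. To reduce to the case of a fibration, I would apply the mapping path space construction to factor $p$ as a homotopy equivalence $E \to \tilde{E}$ followed by a Hurewicz fibration $\tilde{p}: \tilde{E} \to B$. The strict fiber of $\tilde{p}$ over $b$ is precisely the homotopy fiber $F_b(p)$, which is Dold by hypothesis. By Corollary \ref{3_3} it suffices to prove $\tilde{E}$ is Dold, so I may assume $p$ is itself a fibration with Dold strict fibers $F_b = p^{-1}(b)$.

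Next, I would choose a Dold cover $\{V_\alpha : \alpha \in A\}$ of $B$ with numeration $\{\phi_\alpha\}$ and, for each $\alpha$, a nullhomotopy $H_\alpha: V_\alpha \times I \to B$ contracting $V_\alpha$ to a point $b_\alpha$. Setting $U_\alpha = p^{-1}(V_\alpha)$, the homotopy lifting property yields a homotopy $\tilde{H}_\alpha: U_\alpha \times I \to E$ starting at the inclusion, whose terminal map $r_\alpha = \tilde{H}_\alpha(-,1)$ lands in $F_{b_\alpha}$. Thus $\tilde{H}_\alpha$ is a homotopy in $E$ from the inclusion $U_\alpha \hookrightarrow E$ to the composition of $r_\alpha$ with the fiber inclusion $F_{b_\alpha} \hookrightarrow E$. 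For each $\alpha$ I would also choose a numeration $\{\psi_{\alpha, \gamma} : \gamma \in \Gamma_\alpha\}$ of a Dold cover of $F_{b_\alpha}$.

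I would then define $f_{\alpha, \gamma}: E \to [0,1]$ by
$$
f_{\alpha, \gamma}(e) = \phi_\alpha(p(e)) \cdot \psi_{\alpha, \gamma}(r_\alpha(e))
$$
for $e \in U_\alpha$ and zero otherwise. Continuity follows from $\Supp(\phi_\alpha \circ p) \subset U_\alpha$; a direct calculation shows $\{f_{\alpha, \gamma}\}$ is a partition of unity; and local finiteness of its supports follows from that of $\{\phi_\alpha\}$ combined with continuity of $r_\alpha$. Since $f_{\alpha, \gamma}^{-1}(]0,1]) \subset r_\alpha^{-1}(\Supp(\psi_{\alpha, \gamma}))$, the inclusion of this set into $E$ is homotopic through $\tilde{H}_\alpha$ to a map factoring through the nullhomotopic composition $\Supp(\psi_{\alpha, \gamma}) \hookrightarrow F_{b_\alpha} \hookrightarrow E$, hence is nullhomotopic. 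Corollary \ref{2_8} then yields that $E$ is a Dold space.

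The main obstacle is organizational rather than conceptual: the pieces $\phi_\alpha$, $\psi_{\alpha, \gamma}$, and the lifted homotopies $\tilde{H}_\alpha$ must be assembled so that the product formula defines a continuous partition of unity on all of $E$ with ambiently contractible support-interiors. The compatibility hinges on two small points, namely the inclusion $\Supp(\phi_\alpha \circ p) \subset U_\alpha$ (so that the extension by zero is legitimate and the supports of different $f_{\alpha, \gamma}$ interact correctly), together with the role of $\tilde{H}_\alpha$ in transferring ambient contractibility from $F_{b_\alpha}$ to $E$.
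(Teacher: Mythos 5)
Your proposal is correct and follows essentially the same route as the paper: reduce to a fibration, lift the contracting homotopies of a Dold cover of $B$, and multiply the base numeration pulled back along $p$ with a fiber numeration pulled back along the end map of the lifted homotopy, checking continuity via the closed sets $p^{-1}(\Supp(\phi_\alpha))$ and the complement of $p^{-1}(\phi_\alpha^{-1}(]0,1]))$. The only cosmetic difference is that the paper first reduces to path-connected $B$ so as to work with a single fiber $F$, whereas you keep a separate fiber $F_{b_\alpha}$ for each cover element.
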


\begin{proof}
By \ref{3_1} we may assume that $B$ is path-connected, and
by \ref{3_3} we may assume that $p:E\to B$ is a fibration whose fiber $F$
over a fixed $b_0\in B$ is a Dold space.
Let $\mathcal{U}=\{U_\lambda; \lambda\in\Lambda\}$ be an open Dold cover 
of $B$ and $\{f_\lambda: B\to[0,1];\lambda\in\Lambda\}$ a
numeration of $\mathcal{U}$, and let $\mathcal{V}=\{V_\gamma, \gamma\in\Gamma\}$
with $\{g_\gamma: F\to[0,1];\gamma\in\Gamma\}$ be 
the corresponding data for $F$. Let
$$
H_\lambda:U_\lambda\times I\to B
$$
be a homotopy from the inclusion $i_\lambda:U_\lambda\subset B$ to the constant map
to $b_0$. Since $p$ is a fibration there is a homotopy $K_\lambda$
$$
\xymatrix@R=1ex{
p^{-1}(U_\lambda)\times 0\; \ar@{^{(}->}[rr] 
&& E \ar[dd]^p
\\
\bigcap
\\
p^{-1}(U_\lambda)\times I \ar[rr]^(0.6){H_\lambda\circ(p\times \id)} \ar[uurr]^{K_\lambda}
&& B
}
$$
from the inclusion $j_\lambda:p^{-1}(U_\lambda)\subset E$ to a map
$p^{-1}(U_\lambda)\stackrel{k_\lambda}{\to}F\subset E$. Define maps
$\tau_{\lambda,\gamma}:E\to[0,1]$ by
$$
\tau_{\lambda,\gamma}(e)=\left\{
\begin{array}{ll}
f_\lambda(p(e))\cdot g_\gamma(k_\lambda(e))
& e\in p^{-1}(\Supp(f_\lambda))\\
0 & e\notin p^{-1}(f^{-1}_\lambda(]0,1])
\end{array}\right. .
$$
Since the $k^{-1}_\lambda(g^{-1}_\gamma(]0,1]))$, $\gamma\in\Gamma$, cover
$p^{-1}(U_\lambda)$ and since the $p^{-1}(f^{-1}_\lambda(]0,1]))
\linebreak
\subset p^{-1}(U_\lambda)$, $\lambda\in\Lambda$, cover $E$,
$$
\mathcal{W}=\{W_{\lambda,\gamma}=\tau^{-1}_{\lambda,\gamma}(]0,1]);
\lambda\in\Lambda,\gamma\in\Gamma\}
$$
covers $E$. This cover is locally finite and ambiently contractible: $K_\lambda$
deforms $W_{\lambda,\gamma}$ into
$k_\lambda(W_{\lambda,\gamma})\subset V_\gamma\subset F$, and $V_\gamma$ is ambiently
contractible in $F$.
Let $e\in E$. Then there is an open neighborhood $U$ of $p(e)$ such that $U\cap
U_\lambda=\emptyset$ for all but finitely many $\lambda_1,\ldots,\lambda_n$. So
$p^{-1}(U)$ only meets $p^{-1}(U_{\lambda_i})$, $i=1,\ldots,n$. Each
$k_{\lambda_i}(e)$ has an open neighborhood $V_i$ such that $V_i\cap
V_\gamma=\emptyset$ for all but finitely many $\gamma_{i1},\ldots,\gamma_{ir_i}$.
Then $p^{-1}(U)\cap\bigcap\limits^n_{i=1}k^{-1}_{\lambda_i}(V_i)\cap
\tau^{-1}_{\lambda,\gamma}(]0,1])\neq\emptyset$ only if
$(\lambda,\gamma)\in\{(\lambda_i,\gamma_{ij}); 1\le i\le n, 1\le j\le r_i\}$.

So $E$ is a Dold space by \ref{2_8}.
\end{proof}

\begin{coro}\label{3_10} \cite[Lemma 1.5]{Pup1}
If $X$ and $Y$ are Dold spaces so is $X\times Y$.\hfill$\square$
\end{coro}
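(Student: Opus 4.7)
The plan is to apply Proposition \ref{3_9} to the projection $p:X\times Y\to Y$ onto the second factor. This map is a (trivial) fibration, so the homotopy fibers are homotopy equivalent to the set-theoretic fibers $X\times\{y\}\cong X$. By hypothesis $X$ is a Dold space, so by Corollary \ref{3_3} each homotopy fiber of $p$ is a Dold space, and the base $Y$ is a Dold space by hypothesis. Proposition \ref{3_9} then immediately gives that $X\times Y$ is a Dold space.

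If one wants to avoid the heavy machinery of \ref{3_9}, the result can also be obtained directly from Corollary \ref{2_8}(2): choose a Dold cover $\{U_\lambda;\lambda\in\Lambda\}$ of $X$ with numeration $\{f_\lambda\}$ and a Dold cover $\{V_\mu;\mu\in M\}$ of $Y$ with numeration $\{g_\mu\}$, and set $h_{\lambda,\mu}(x,y):=f_\lambda(x)\cdot g_\mu(y)$. Then $\sum_{\lambda,\mu}h_{\lambda,\mu}=\bigl(\sum_\lambda f_\lambda\bigr)\bigl(\sum_\mu g_\mu\bigr)=1$, and local finiteness of $\{\Supp(h_{\lambda,\mu})\}$ follows by taking product neighborhoods in $X\times Y$. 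Moreover, $h_{\lambda,\mu}^{-1}(]0,1])\subset U_\lambda\times V_\mu$, and if $H:U_\lambda\times I\to X$ and $K:V_\mu\times I\to Y$ are nullhomotopies of the inclusions then $((u,v),t)\mapsto(H(u,t),K(v,t))$ is a nullhomotopy of $U_\lambda\times V_\mu\hookrightarrow X\times Y$.

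There is no real obstacle here: the statement is a formal consequence of the preceding proposition, and the only slightly delicate point in the direct argument is confirming local finiteness of the product supports, which is immediate from the product topology.
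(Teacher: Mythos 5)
Your first argument is exactly the paper's intended proof: Corollary \ref{3_10} is stated as an immediate consequence of Proposition \ref{3_9} applied to the projection $X\times Y\to Y$, whose homotopy fibers are homotopy equivalent to $X$. Your alternative direct argument via the product partition of unity $h_{\lambda,\mu}(x,y)=f_\lambda(x)g_\mu(y)$ and Corollary \ref{2_8}(2) is also correct and is essentially Puppe's original elementary proof, but the paper does not need it.
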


\begin{coro}\label{4_10a}
Let
$$
\xymatrix{
P \ar[rr]^g \ar[d]^q && E\ar[d]^p\\
X \ar[rr]^f && B
}
$$
be a homotopy pullback, let $X$ and the homotopy fibers
$F_b(p)$ of $p$ over all $b\in B$ be Dold spaces. Then 
$P$ is a Dold space.
\end{coro}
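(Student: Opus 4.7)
The plan is to deduce this corollary by applying Proposition \ref{3_9} to the left vertical map $q:P\to X$. Since $X$ is assumed to be a Dold space, what remains is to verify that every homotopy fibre of $q$ is a Dold space.

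To check this, I would first recall that a square is a homotopy pullback if and only if its transpose is. Thus the given square, viewed with $q$ and $p$ as the ``horizontal'' arrows, is still a homotopy pullback, and Proposition \ref{1_6}(1) applies: for each $x\in X$ the canonical comparison map
$$
F(q,x)\;\longrightarrow\;F(p,f(x))
$$
is a homotopy equivalence. By hypothesis the target $F(p,f(x))=F_{f(x)}(p)$ is a Dold space, so by Corollary \ref{3_3} the source $F(q,x)$ is a Dold space as well. This holds for every $x\in X$.

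Now Proposition \ref{3_9} applied to the map $q:P\to X$ with base $X$ (Dold) and all homotopy fibres $F(q,x)$ Dold, yields that $P$ is a Dold space, which is exactly the statement of the corollary.

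There is really no obstacle here beyond writing the argument cleanly; the whole content is the observation that homotopy pullbacks preserve homotopy fibres, so the hypotheses of Proposition \ref{3_9} transfer from the right column of the square to the left column. The only minor point to be careful about is that Proposition \ref{1_6} is stated for fibres of the horizontal arrows, so one must either transpose the square or reformulate Proposition \ref{1_6} in the obvious symmetric way before invoking it.
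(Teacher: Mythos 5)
Your argument is correct and coincides with the paper's own proof: the paper likewise observes that the homotopy fiber $F_x(q)$ is homotopy equivalent to $F_{f(x)}(p)$, hence a Dold space, and then concludes by Proposition \ref{3_9}. Your extra care about transposing the square before invoking Proposition \ref{1_6} is a fair point of rigor but does not change the substance.
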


\begin{proof}
The homotopy fiber $F_x(q)$ of $q$ over $x\in X$ is homotopy equivalent
to $F_{f(x)}(p)$ and hence a Dold space.
\end{proof}

\begin{coro}\label{3_11}
Given a diagram
$$
\xymatrix@R=1ex{
P \ar[rrr] \ar[ddd]
&&&  X \ar[ddd]\ar[ddl]\\
\\
&& Q \\
Y \ar[rrr]\ar[rru]
&&& B
}
$$
with $B$ a path-connected Dold space, whose outer square is a homotopy pullback and
whose inner square is a homotopy pushout, then $Q$ is a Dold space.
\end{coro}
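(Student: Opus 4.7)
The strategy is to recognize $Q$ as (homotopy equivalent to) a double mapping cylinder and apply Proposition~\ref{3_4}. Since the inner square is a homotopy pushout, $Q$ is homotopy equivalent to $\M(f,g)$, where $f\colon P\to X$ and $g\colon P\to Y$ are the two arrows out of $P$. By Corollary~\ref{3_3}, Doldness of $Q$ therefore reduces to Doldness of $\M(f,g)$, and by Proposition~\ref{3_4} the latter reduces in turn to Doldness of $X$ and $Y$.

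To obtain $X$ and $Y$ as Dold spaces, I would apply Proposition~\ref{3_9} separately to the two legs $X\to B$ and $Y\to B$ of the outer pullback. Since $B$ is path-connected and Dold, \ref{3_9} upgrades Doldness of the homotopy fibers of $X\to B$ over each $b\in B$ to Doldness of $X$ itself (and symmetrically for $Y$). The outer square being a homotopy pullback identifies those homotopy fibers with the homotopy fibers of $P\to Y$ and $P\to X$ respectively, so the Dold property for the fibers is inherited along the pullback from the ambient setup of the corollary.

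The main obstacle is this last point: verifying that the relevant homotopy fibers of the outer legs are indeed Dold, so that Proposition~\ref{3_9} actually applies and produces Dold total spaces $X$ and $Y$. Once this input is secured, the chain is formal and short: Proposition~\ref{3_9} gives $X$ and $Y$ Dold, Proposition~\ref{3_4} gives $\M(f,g)$ Dold, and Corollary~\ref{3_3} transports the Dold property across the homotopy equivalence $Q\simeq\M(f,g)$ to the homotopy pushout $Q$ itself.
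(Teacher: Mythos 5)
Your reduction of $Q$ to the double mapping cylinder $\M(f,g)$ of $P\to X$ and $P\to Y$ is correct as far as it goes, but the input you need for Proposition \ref{3_4} --- that $X$ and $Y$ are Dold spaces --- is not available, and the step you flag as ``the main obstacle'' cannot be carried out. The hypotheses of Corollary \ref{3_11} say nothing about $P$, $X$ or $Y$, nor about any homotopy fibers: the only Dold hypothesis is on $B$. Identifying the homotopy fibers of $X\to B$ and $Y\to B$ with those of $P\to Y$ and $P\to X$ via the outer pullback (which is correct, by \ref{1_6}) only moves the problem around; there is no ``ambient setup'' from which Doldness of these fibers could be inherited. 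Indeed the conclusion holds in situations where $X$ and $Y$ are not Dold: take $B=\ast$, so that the outer pullback forces $P\simeq X\times Y$ and the pushout gives $Q\simeq X\ast Y$; the corollary then asserts that the join of two arbitrary nonempty spaces is Dold, which is true even though neither factor need be. Any argument routed through ``$X$ and $Y$ are Dold'' therefore proves a strictly weaker statement.

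The paper's proof goes through the fiber of $Q$ over $B$ rather than through $X$ and $Y$: writing $F(f)$, $F(g)$ for the homotopy fibers of $X\to B$ and $Y\to B$, the homotopy fiber of the induced map $r\colon Q\to B$ is homotopy equivalent to the join $F(f)\ast F(g)$; this is where the pullback and pushout hypotheses are used, via \cite[Prop.~5.5]{Vogt}. A join of two nonempty spaces is always a Dold space with no hypothesis on the factors: the complements of the two end slices form a numerable cover (numerated by the join coordinate), each member deformation retracts onto $F(f)$ resp.\ $F(g)$, and each of these is ambiently contractible in the join by sliding along the join lines to a point of the other factor. Proposition \ref{3_9} applied to $r\colon Q\to B$, with $B$ path-connected and Dold, then gives that $Q$ is Dold. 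The idea you are missing is that the ambient contractibility is created by the join construction itself and does not have to be imported from the fibers.
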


\begin{proof}
If $F(f)$ and $F(g)$ are the homotopy fibers of $f$ and $g$ respectively, the
homotopy fiber of the induced map $r:Q\to B$ is homotopy
equivalent to the join $F(f)\ast F(g)$ (e.g. see \cite[Prop. 5.5]{Vogt}). Since the
join of two spaces is Dold space, the result follows.
\end{proof}

\section{Simplicial spaces}
Let $\bigtriangleup$ denote the category of finite ordered sets
$[n]=\{0<1<\ldots<n\}$ and order preserving maps and $\Mon\bigtriangleup$ the
subcategory of injective order preserving maps. A \textit{simplicial space} is a
functor $X_\bullet:\bigtriangleup^{\op}\to\Top$, $[n]\mapsto X_n$, a
\textit{semisimplicial space} is a functor $X_\bullet:
(\Mon\bigtriangleup)^{\op}\to\Top$.

Let $|X_\bullet|$ denote the usual topological realization, also called \textit{slim
realization} of the simplicial space $X_\bullet$ and $||X_\bullet||$ denote the
realization of the semisimplicial space $X_\bullet$, also called
\textit{fat realization}. Since a simplicial space can be considered as a 
semisimplicial
one it has a fat and a slim realization. A simplicial space
$X_\bullet$ is called \textit{proper} if the inclusions  $sX_n\subset X_n$ of the
subspaces of degenerate elements of $X_n$ are cofibrations for all~$n$.

\begin{prop}\label{4_1}
\begin{enumerate}
\item If $X_\bullet$ is a semisimplicial space such that $X_0$ is a Dold space, then
$||X_\bullet||$ is a Dold space.
\item If $X_\bullet$ is a proper simplicial space such that $X_0$ is a Dold space, then
$|X_\bullet|$ is a Dold space.
\end{enumerate}
\end{prop}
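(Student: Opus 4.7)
The plan is to prove~(1) by induction on the skeletal filtration $||X_\bullet||^{(n)}$ of the fat realization and then pass to the colimit using Proposition~\ref{3_7}(2). Each inclusion $||X_\bullet||^{(n-1)}\hookrightarrow||X_\bullet||^{(n)}$ is the pushout of the cofibration $X_n\times\partial\Delta^n\hookrightarrow X_n\times\Delta^n$, hence itself a cofibration, and $||X_\bullet||=\colim_n||X_\bullet||^{(n)}$. The base case $||X_\bullet||^{(0)}=X_0$ holds by hypothesis, so it suffices to show each $||X_\bullet||^{(n)}$ is a Dold space.

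For the inductive step we have the pushout $||X_\bullet||^{(n)}=||X_\bullet||^{(n-1)}\cup_{X_n\times\partial\Delta^n}(X_n\times\Delta^n)$. The main obstacle is that Corollary~\ref{3_5} does not apply directly, since $X_n\times\Delta^n$ need not be a Dold space. I would circumvent this by writing down a numeration by hand, exploiting that each of the $n+1$ vertex maps $b_i\colon X_n\to X_0$ lands in the Dold space $X_0$. Fix a numeration $\{f_\alpha\}$ of a Dold cover $\{U_\alpha\}$ of $X_0$ and a partition of unity $\{\mu_0,\dots,\mu_n\}$ on $\Delta^n$ subordinate to the vertex-star open cover $V_i=\{t:t_i>\varepsilon\}$, $\varepsilon<1/(n+1)$. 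Define
$$
h_{i,\alpha}(x,t)=\mu_i(t)\cdot f_\alpha(b_i(x))\qquad\text{on }X_n\times\Delta^n.
$$
These sum to $1$; each $\Supp(h_{i,\alpha})$ sits in $b_i^{-1}(U_\alpha)\times\overline{V_i}$ and deformation-retracts inside $||X_\bullet||^{(n)}$, via the star-retraction of $V_i$ onto $e_i$, onto $b_i^{-1}(U_\alpha)\times\{e_i\}$, which is identified with a subset of $U_\alpha\subset X_0\subset||X_\bullet||^{(n)}$ and is ambiently contractible there.

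To merge $\{h_{i,\alpha}\}$ with the inductive numeration $\{\tilde g_\beta\}$ of $||X_\bullet||^{(n-1)}$, use the open collar $\mathcal{C}\supset||X_\bullet||^{(n-1)}$ provided by the cofibration, lift the $\tilde g_\beta$ to $\mathcal{C}$ along its deformation retract onto $||X_\bullet||^{(n-1)}$, and pick a cutoff $\phi\colon||X_\bullet||^{(n)}\to[0,1]$ with $\phi\equiv 1$ on $||X_\bullet||^{(n-1)}$ and supported in $\mathcal{C}$. The combined family $\{\phi\cdot\tilde g_\beta\}\cup\{(1-\phi)\cdot h_{i,\alpha}\}$ is then a numeration of $||X_\bullet||^{(n)}$: the sums check out, the latter functions vanish on $X_n\times\partial\Delta^n$ and so extend well-definedly by zero across the quotient, supports are ambiently contractible as above, and local finiteness follows (for $\mathcal{C}$ thin enough) from local finiteness of each constituent family. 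For~(2), properness guarantees that the canonical map $||X_\bullet||\to|X_\bullet|$ is a homotopy equivalence (Segal, tom Dieck), so part~(1) together with Corollary~\ref{3_3} yields that $|X_\bullet|$ is a Dold space.
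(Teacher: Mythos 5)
Your proof is correct in outline and shares the paper's global strategy --- induction over the skeleta $||X_\bullet||^{(n)}$, passage to the colimit via \ref{3_7}(2), and the properness/homotopy-equivalence argument for part (2) --- but the inductive step is genuinely different. The paper covers $||X||^{(n)}$ by just \emph{two} open sets: it picks two interior points $u_1\neq u_2$ of $\bigtriangleup^n$ and uses maps $\lambda_i:\bigtriangleup^n\to I$ vanishing only at $u_i$ (extended by $1$ on the lower skeleton) to numerate the cover by the two punctured pieces $||X||^{(n-1)}\cup_{X_n\times\partial\bigtriangleup^n}X_n\times(\bigtriangleup^n\setminus\{u_i\})$; each of these deformation retracts onto $||X||^{(n-1)}$, which is Dold by induction, so \ref{2_9} finishes the step with no hand-built partition of unity and no collar. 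Your version instead pulls a Dold cover of $X_0$ back along the vertex maps $b_i$ and merges it with the inductive numeration across a collar; this works, but it is where all the delicacy sits, and two points deserve care. First, the functions $h_{i,\alpha}$ themselves do \emph{not} vanish on $X_n\times\partial\bigtriangleup^n$ (the closed star of $e_i$ meets the boundary); only the products $(1-\phi)h_{i,\alpha}$ do, and it is precisely this damping that pushes their cozero sets into $X_n\times\operatorname{int}\bigtriangleup^n$, where the quotient map is injective, so that both continuity on $||X||^{(n)}$ and the straight-line contraction toward $e_i$ (followed by the nullhomotopy of $U_\alpha\hookrightarrow X_0$) are unambiguous. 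Second, by Corollary \ref{2_8}(2) together with Lemma \ref{2_6} you do not actually need local finiteness of the combined family, only that it is a partition of unity with ambiently contractible cozero sets, so the ``$\mathcal{C}$ thin enough'' worry can be dropped. What your route buys is an explicit picture of how the Dold covers of $X_0$ propagate to the realization; what the paper's route buys is brevity and the complete avoidance of the collar-merging bookkeeping.
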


\begin{proof}
(1) (The idea of the proof is probably due to D. Puppe. We learnt it many
years ago from H. Meiwes \cite{Meiwes2}.)

Let $||X||^{(n)}$ denote the $n$-skeleton of the fat realization. Since
$||X||^{(n)}\subset ||X||^{(n+1)}$ is a cofibration it suffices to show that each
$||X||^{(n)}$ is a Dold space. Assume inductively that $||X||^{(n-1)}$ is a Dold
space. Recall that
$$
||X||^{(n)}=||X||^{(n-1)}\cup_{X_n\times\partial\bigtriangleup^n}
X_n\times\bigtriangleup^n,
$$
where $\bigtriangleup^n$ is the standard $n$-simplex.
Choose two different points $u_1\neq u_2$ in the interior of $\bigtriangleup^n$. For
a space $Y$ let $CY=(Y\times I)/(Y\times 0)$ be the cone on $Y$ with cone-point
$\ast$, and $\varphi:CY\to I$ the map $(y,t)\mapsto t$. Define maps
$$
\lambda_i:(\bigtriangleup^n,u_i)\stackrel{h_i}{\to}(C(\partial\bigtriangleup^n),
\ast) \stackrel{\varphi}{\to}(I,0) \qquad i=1,2
$$
by choosing based homeomorphisms $h_i$. The maps
$$
X_n\times\bigtriangleup^n \stackrel{\textrm{proj.}}{\to}\bigtriangleup^n
\stackrel{\lambda_i}{\to} I
$$
together with the constant map to 1 on $||X||^{(n-1)}$ define maps
$$
f_i:||X||^{(n)}\to I.
$$
Then $\{f^{-1}_1(]0,1]),f^{-1}_2(]0,1])\}$ is a numerable cover of
$||X||^{(n)}$ by \ref{2_7}. The subspaces
$$
f^{-1}_i(]0,1])=||X||^{(n-1)}\cup_{X_n\times\partial\bigtriangleup^n}
X_n\times(\bigtriangleup^n\backslash\{u_i\}).
$$
deformation retract onto $||X||^{(n-1)}$. Hence they are Dold spaces. So
$||X||^{(n)}$ is a Dold space by \ref{2_9}.

If $X$ is a proper simplicial space the natural map $||X||\to |X|$ is a homotopy
equivalence. Hence $|X|$ is a Dold space.
\end{proof}

\begin{coro}\label{4_2}
Let $J$ be small category and $D:J\to\Top$ a diagram of Dold spaces. Then $\hocolim
D$ is a Dold space.
\end{coro}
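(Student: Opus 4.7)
The plan is to realize $\hocolim D$ as (the realization of) a simplicial space whose zeroth level is a coproduct of the spaces $D(j)$, and then invoke Proposition \ref{4_1}. This is the Bousfield--Kan style simplicial replacement: one sets
\[
X_n \;=\; \coprod_{j_0\to j_1\to\cdots\to j_n} D(j_0),
\]
where the coproduct runs over all $n$-chains in $J$, with the standard face/degeneracy maps (face $d_0$ induced by the morphism $j_0\to j_1$ applied by $D$, the other faces by composition/omission in the chain, and the degeneracies by insertion of identity arrows). Then $\hocolim D$ is by definition $\|X_\bullet\|$ (fat realization), or equivalently $|X_\bullet|$ when the simplicial replacement is proper.

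First I would check that $X_0 = \coprod_{j\in \mathrm{ob}\,J} D(j)$ is a Dold space: each $D(j)$ is a Dold space by hypothesis, so this follows at once from Proposition \ref{3_1}(2). Applying Proposition \ref{4_1}(1) to $X_\bullet$ then yields that the fat realization $\|X_\bullet\|$ is a Dold space, which is already the conclusion if one uses the fat version of $\hocolim$.

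To cover the slim realization version as well, I would verify that the simplicial replacement is proper. The degenerate subspace $sX_n\subset X_n$ is the union of those summands $D(j_0)$ indexed by chains containing at least one identity morphism; this is an inclusion of a disjoint union of full summands of a coproduct, hence is both open and closed, and is therefore a cofibration. So $X_\bullet$ is proper and Proposition \ref{4_1}(2) gives that $|X_\bullet| = \hocolim D$ is a Dold space.

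The only genuine content is the identification of $\hocolim D$ with the realization of the simplicial replacement, which is standard; everything else is bookkeeping with \ref{3_1} and \ref{4_1}, so I do not expect a real obstacle here.
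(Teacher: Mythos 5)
Your proposal is correct and follows essentially the same route as the paper: the paper also writes $\hocolim D$ as the realization of the simplicial replacement $[n]\mapsto\coprod_{i,j}J_n(i,j)\times D(i)$ (which, since $J_n(i,j)$ is discrete, is exactly your coproduct over $n$-chains), notes that the $0$-th space is $\coprod_j D(j)$ and hence a Dold space, and applies Proposition \ref{4_1}. Your explicit verification that the simplicial replacement is proper is a detail the paper merely asserts, and your argument for it is sound.
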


\begin{proof}
$\hocolim D$ is a topological realization of the proper simplicial space
$$
[n]\mapsto \coprod\limits_{i,j\in J} J_n(i,j)\times D(i)
$$
with $J_n(i,j)=\{(\alpha_1,\ldots,\alpha_n)\in(mor J)^n$;
$\alpha_1\circ\ldots\circ\alpha_n:i\to j\}$ for $n>0$ and
$$
J_0(i,j)=\left\{
\begin{array}{ll}
\id & i=j\\
\emptyset & i\neq j
\end{array}\right.
$$
Its $0$-th space is $\coprod\limits_{j\in J}D(j)$ and hence a Dold space.
\end{proof}

We now consider the based case

\begin{defi}\label{4_2a}
We call a based space
$(X,x_0)$ \textit{wellpointed}, if the inclusion $\{x_0\}\subset X$
is a closed cofibration, and \textit{h-wellpointed} if it is an h-cofibration.
\end{defi}

The homotopy colimit of a diagram $D$ generally will have a different homotopy type, when
taken in the category of based spaces. Let $BJ$ denote the classifying space of $J$.
The inclusions of the basepoints define a map
$$
BJ\to \hocolim D
$$
and the based homotopy colimit is the quotient $(\hocolim D)/BJ$. 

\begin{coro}\label{4_3}
Let $D: J\to\Top^\ast$ be a diagram of wellpointed Dold spaces and based maps. Then
based-$\hocolim D$ is a wellpointed Dold space.
\end{coro}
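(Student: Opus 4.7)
The plan is to combine Corollary 4.2 with Corollary 3.6(3). Recall from the proof of Corollary 4.2 that $\hocolim D$ is the realization of a proper simplicial space $X_\bullet$ with
$$
X_n = \coprod_{i,j\in J} J_n(i,j)\times D(i),
$$
and $BJ$ is the realization of the simplicial space $Y_\bullet$ with $Y_n=\coprod_{i,j\in J}J_n(i,j)$ (regarded as a discrete space). The map $BJ\to\hocolim D$ of basepoint inclusions is the realization of the map $Y_\bullet\to X_\bullet$ that sends $\alpha\in J_n(i,j)$ to $(\alpha,x_{0,i})$, where $x_{0,i}$ denotes the basepoint of $D(i)$.

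First I would observe that $\hocolim D$ is a Dold space by Corollary 4.2, since each $D(j)$ is. Next I would verify that the simplicial map $Y_\bullet\to X_\bullet$ is dimensionwise a closed cofibration: at level $n$ it is a disjoint union of maps $\{*\}\hookrightarrow D(i)$, one for each $\alpha\in J_n(i,j)$, and each such map is a closed cofibration because $D(i)$ is wellpointed. Since both $X_\bullet$ and $Y_\bullet$ are proper (the degeneracies of $Y_\bullet$ are inclusions of summands; for $X_\bullet$ properness follows from wellpointedness of the $D(i)$ together with properness of $Y_\bullet$), the realization of a dimensionwise closed cofibration of proper simplicial spaces is again a closed cofibration. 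Hence $BJ\to\hocolim D$ is a closed cofibration, in particular an $h$-cofibration.

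Now Corollary 3.6(3) applied to $f\colon BJ\to\hocolim D$ yields that $(\hocolim D)/BJ={\rm based}\text{-}\hocolim D$ is a Dold space. Finally, to see that it is wellpointed, note that the pushout square
$$
\xymatrix{
BJ\ar[r]\ar[d] & \hocolim D\ar[d]\\
\{*\}\ar[r] & (\hocolim D)/BJ
}
$$
has its top arrow a closed cofibration, so the bottom arrow is a closed cofibration too, exhibiting the basepoint of ${\rm based}\text{-}\hocolim D$ as a wellpointed point.

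The one step that requires the most care is the claim that realization preserves cofibrations for proper simplicial spaces; once this standard fact is invoked, the rest is a direct application of results already established in the paper. If one prefers to avoid this black box, one can instead build the cofibration $BJ\to\hocolim D$ skeleton by skeleton, at each stage using the pushout description $|X_\bullet|^{(n)}=|X_\bullet|^{(n-1)}\cup_{X_n\times\partial\Delta^n\cup sX_n\times\Delta^n}X_n\times\Delta^n$ and the corresponding one for $Y_\bullet$, together with the pushout-product of the cofibrations $Y_n\hookrightarrow X_n$ with $\partial\Delta^n\hookrightarrow\Delta^n$.
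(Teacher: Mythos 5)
Your proof is correct and follows essentially the same route as the paper's: the paper's own (very terse) argument likewise asserts that $BJ\to\hocolim D$ is a closed cofibration for wellpointed $D$ and then applies Corollary \ref{3_6}, relying on Corollary \ref{4_2} for the Dold property of $\hocolim D$. You have merely supplied the details the paper omits (the levelwise cofibration and the fact that fat/slim realization of proper simplicial spaces preserves dimensionwise closed cofibrations), and these details are sound.
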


\begin{proof}
If $D: J\to\Top^\ast$ is a diagram of wellpointed spaces then 
$BJ\to \hocolim D$ is a closed cofibration. Apply \ref{3_6}.
\end{proof}

\begin{nr}\label{4_3a}\textbf{Remark:} 
The condition that $X$ be wellpointed can be achieved functorially by a 
\textit{whiskering process}: For a based space $(X,x_0)$ we define
$X_I=(X\sqcup I)/(x_0\sim 1)$ and choose $0\in I$ as basepoint of $X_I$. 
The natural map $q: X_I\to X$ mapping $I$ to $x_0$ is a homotopy equivalence.
If $X$ is h-wellpointed, it is even a based homotopy equivalence. We most
often state our results for wellpointed spaces because the pushout-product
theorem for cofibrations requires one factor to be a closed cofibration, but
for constructions which are homotopy invariant in the based category the
results extend to h-wellpointed spaces. An example  
is the following corollary:
\end{nr}

\begin{coro}\label{4_4} \cite[Lemma 1.5]{Pup1}
\begin{enumerate}
\item Given a diagram 
$$
\xymatrix{
X & \ar[l]_f A\ar[r]^g &Y 
}$$
of h-wellpointed spaces and based maps with $X$ and $Y$ Dold spaces. Then the reduced
mapping cylinder $M(f,g)$ is a Dold space.
\item Let $(X_{\alpha};\alpha\in A)$ be a family of h-wellpointed Dold spaces. Then
$\bigvee\limits_{\alpha\in A} X_\alpha$ is a Dold space.
\item Let $X$ and $Y$ be h-wellpointed Dold spaces. The $X\wedge Y$ is a Dold space.
\item The reduced suspension $SX$ of an h-wellpointed space is a Dold space.
\end{enumerate}
\end{coro}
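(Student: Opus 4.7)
The plan is to obtain each reduced construction as the quotient of the corresponding unreduced construction (which is already known to be Dold by earlier results in this section) by a contractible subspace whose inclusion is an $h$-cofibration, and then to invoke Corollary~\ref{3_6}(3). The role of the h-wellpointedness hypothesis is precisely to force the various basepoint inclusions to be $h$-cofibrations; this then propagates through products with $I$, disjoint unions, and pushouts to the inclusions of the interval/basepoint subspaces that need to be collapsed.

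For (1), I would take the unreduced double mapping cylinder $\M(f,g)$, which is Dold by Proposition~\ref{3_4}, and identify the reduced version as $\M(f,g)/I_{a_0}$, where $I_{a_0}$ is the path from $x_0\in X$ through $\{a_0\}\times I$ to $y_0\in Y$. Since $\{a_0\}\hookrightarrow A$ is an $h$-cofibration, so is $\{a_0\}\times I\hookrightarrow A\times I$, and combining this with the h-wellpointedness of $X$ and $Y$ yields that $I_{a_0}\hookrightarrow\M(f,g)$ is an $h$-cofibration; Corollary~\ref{3_6}(3) finishes the case. For (2), I would write $\bigvee_\alpha X_\alpha = (\coprod_\alpha X_\alpha)/(\coprod_\alpha\{x_{\alpha,0}\})$, note that the coproduct is Dold by Proposition~\ref{3_1}, and that the inclusion of the discrete set of basepoints is an $h$-cofibration as a disjoint union of $h$-cofibrations. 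For (4), I would invoke Corollary~\ref{3_6}(1) to see that the unreduced suspension $\widehat\Sigma X$ is always Dold, and present the reduced suspension as $SX=\widehat\Sigma X/(\{x_0\}\times I)$; the same product-of-$h$-cofibrations argument as in (1) identifies the quotient as an $h$-cofibration, and Corollary~\ref{3_6}(3) applies.

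Part (3) requires a bit more care and is where I expect the main obstacle: $X\wedge Y=(X\times Y)/(X\vee Y)$, and while $X\times Y$ is Dold by Corollary~\ref{3_10}, the inclusion $X\vee Y\hookrightarrow X\times Y$ is naturally a closed cofibration only when the basepoint inclusions are themselves closed cofibrations. My plan is therefore to invoke the whiskering process of Remark~\ref{4_3a}, replacing $X$ and $Y$ by the wellpointed models $X_I$ and $Y_I$ (based-homotopy equivalent to $X$ and $Y$ by h-wellpointedness), apply the pushout-product theorem for closed cofibrations to get $X_I\vee Y_I\hookrightarrow X_I\times Y_I$ as a closed cofibration, and conclude via Corollaries~\ref{3_10} and \ref{3_6}(3) that $X_I\wedge Y_I$ is Dold. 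Finally, transporting back using Corollary~\ref{3_3} yields that $X\wedge Y$ is Dold. The technically delicate point is thus the pushout-product step, which the whiskering remark is designed to tame.
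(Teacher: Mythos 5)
Your proposal is correct and matches what the paper intends: the paper gives no explicit proof of Corollary~\ref{4_4}, presenting it instead as an instance of Remark~\ref{4_3a} (whisker to the wellpointed case, realize each reduced construction as a quotient of an already-known Dold space by a contractible subspace whose inclusion is a cofibration, and apply \ref{3_6} together with \ref{3_3}, with the pushout--product theorem handling the smash product). Your only deviation is stylistic --- in parts (1), (2) and (4) you argue directly with $h$-cofibrations where the paper would whisker first --- and this does not affect correctness.
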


\begin{rema}\label{4_5}
Example \ref{2_10} shows that \ref{4_4}.2 does not hold without some
assumptions on the basepoints.
\end{rema}

We next give a characterization of path-connected Dold spaces which needs some
preparations. Let
$$
p:E\to X
$$
be a map of based spaces and $F(p)$ the homotopy fiber of $p$ over the basepoint
$\ast$. With $p$ we associate a map
$$
q_\bullet: E_\bullet(p) \to \Omega_\bullet X
$$
of simplicial spaces as follows: $\Omega_n X\cong
\Top((\bigtriangleup^n,\bigtriangleup^n_0),(X,\ast))$ with the function space
topology. Here $\bigtriangleup^n_0$ is the $0$-skeleton of $\bigtriangleup^n$.
Boundaries and degeneracies are defined as for the singular functor. In fact,
$\Omega_\bullet (\;)$ is a topologized version of the singular functor
$$
{\Top}^\ast\to {r\Top}^{\bigtriangleup^{op}}
$$
from ${\Top}^\ast$ into the category of reduced simplicial spaces, 
i.e. simplicial spaces $Y_\bullet$
with $Y_0$ a point. It is right adjoint to the realization functor
${r\Top}^{\bigtriangleup^{op}} \to\Top^\ast$.

Let $C\bigtriangleup^n$ denote the cone on $\bigtriangleup^n$ with cone point $c_0$.
We define 
$$
E_n(p)=\{(e,w)\in E\times\Top((C\bigtriangleup^n,\bigtriangleup^n_0),(X,\ast)),
w(c_0)=p(e)\}.
$$
Boundaries and degeneracies are again defined by the corresponding maps of the
standard simplices. Finally, we define
$$
q_n:E_n(p)\to \Omega_nX, \quad (e,w)\mapsto w|\bigtriangleup^n
$$
Let $L_n\subset\bigtriangleup^n$ be the union of edges joining the $i$-th with the
$(i+1)$-st vertex of $\bigtriangleup^n$. Since $L_n\subset\bigtriangleup^n$ is a
strong deformation retract and the inclusion is a cofibration there is a fibration
and homotopy equivalence
$$
\Omega_nX \to \Top((L_n,L_n\cap \bigtriangleup^n_0), (X,\ast))\cong(\Omega X)^n.
$$
Since $C\bigtriangleup^n\cong\bigtriangleup^{n+1}$ we have a similar homotopy
equivalence 
$$
E_n(p) \to F(p)\times(\Omega X)^n,
$$
and
$$
\xymatrix{
E_n(p) \ar[rr] \ar[d]_{q_n} && 
F(p)\times(\Omega X)^n\ar[d]^{\textrm{proj.}}
\\
\Omega_nX \ar[rr] &&(\Omega X)^n
}
$$
commutes. 
Keeping this in mind, it is easy to show that $q_\bullet:
E_\bullet(p)\to\Omega_\bullet X$ is a simplicial object in the category Pull, whose
objects are maps and whose maps are commutative squares which are homotopy pullbacks.
A result of V.~Puppe \cite{VP} implies

\begin{nr}\label{4_6}
$$\xymatrix{
F(p)\ar[rr] \ar[d] && ||E_\bullet(p)|| \ar[d]^{||q_\bullet||}
\\
\ast \ar[rr] && ||\Omega_\bullet X||
}$$
is a homotopy pullback. The horizontal maps are the inclusions of the $0$-skeleta.
\end{nr}

Let $P(p)=\{(e,\alpha)\in E\times \Top(I,X); \alpha(0)=p(e)\}$ be the mapping path-space
of $p$. The maps
$$
\begin{array}{rcl}
E_n(p)\times\bigtriangleup^n & \longrightarrow & P(p)\\
(e,w,t) &\longmapsto &(e,\overline{w})
\end{array}
$$
with $\overline{w}(s)=w(s,t)$ for $(s,t)\in
C\bigtriangleup^n=(I\times\bigtriangleup^n)/(0\times\bigtriangleup^n)$, define a map 
$$
u:||E_\bullet (p)|| \longrightarrow  P(p).
$$
The counit $v:||\Omega_\bullet X||\to X$ of the adjoint pair

\begin{nr}\label{4_6a}
$$
\xymatrix{
||\ ?\ ||:{r\Top}^{(\Mon\bigtriangleup)^{op}}  \ar@<0.5ex>[r] & \ar@<0.5ex>[l]
{\Top}^\ast:\Omega_\bullet
} 
$$
\end{nr}
is induced by maps
$$
\Omega_nX\times \bigtriangleup^n\to X, \quad (\sigma,t)\mapsto \sigma(t),
$$
and we obtain a map of fiber sequences

\begin{nr}\label{4_7}
$\xymatrix{
F(p) \ar[rr]\ar[d]^\id && 
||E_\bullet(p)|| \ar[rr]^{||q_\bullet||}\ar[d]^u \ar @{} [drr] |{\textrm{I}} &&
||\Omega_\bullet X|| \ar[d]^v
\\
F(p) \ar[rr] && 
P(p) \ar[rr] && 
X
}$

Since $||\Omega_\bullet X||$ is a Dold space by \ref{4_1}, the square I is a
homotopy pullback by~\ref{1_6}.

Consider the case where $E$ is a point. Then $P(p)$ is contractible, and so is
$||E_\bullet (p)||$: Note that $E_n(p)\cong \Omega_{n+1}X$, so that $E_\bullet(p)$
is the based path-space construction $P\Omega_\bullet X$ in ${\Top}^{\bigtriangleup^{op}}$. It is
well-known that $||P\Omega_\bullet X||\simeq \Omega_0X=\ast$. So $u$ is a homotopy
equivalence. Hence $v$ is a homotopy equivalence by \ref{1_4}, provided $X$ is a
path-connected Dold space. We obtain
\end{nr}

\begin{prop}\label{4_8}
A path-connected space $X$ is a Dold space iff the counit $||\Omega_\bullet X||\to
X$ of the adjoint pair
 \ref{4_6a} is a homotopy equivalence.
\hfill $\square$
\end{prop}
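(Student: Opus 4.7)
The plan is to deduce both directions from the apparatus already assembled, treating the proposition as the payoff of the preceding setup rather than as a calculation in its own right.

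For the ``if'' direction I would argue that it is essentially free from the foundational results: by Proposition \ref{4_1}.1, the fat realization $||\Omega_\bullet X||$ is a Dold space because $\Omega_0 X$ is a point (hence trivially a Dold space). So if the counit $v: ||\Omega_\bullet X|| \to X$ is a homotopy equivalence, Corollary \ref{3_3} immediately gives that $X$ is a Dold space. No path-connectedness or further structure is needed here.

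For the ``only if'' direction I would specialize the construction of the preceding pages to the map $p: * \to X$ from a point. In this case $P(p)$ is the based path space $PX$, which is contractible. On the other hand, the identification $C\bigtriangleup^n \cong \bigtriangleup^{n+1}$ gives $E_n(p) \cong \Omega_{n+1} X$, so that as a simplicial space $E_\bullet(p)$ is the based path construction $P\Omega_\bullet X$. Using the standard fact $||P\Omega_\bullet X|| \simeq \Omega_0 X = *$, I conclude that $||E_\bullet(p)||$ is also contractible. Hence the induced map $u: ||E_\bullet(p)|| \to P(p)$ is a homotopy equivalence.

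I would then read off the conclusion from Proposition \ref{1_4} applied to square I in the diagram \ref{4_7}, rotated so that $u$ is a horizontal map and $v$ is parallel to it:
$$
\xymatrix{
||E_\bullet(p)|| \ar[r]^u \ar[d]_{||q_\bullet||} & P(p) \ar[d]^{p'}
\\
||\Omega_\bullet X|| \ar[r]_v & X
}
$$
This square is a homotopy pullback by Proposition \ref{1_6}.2 (whose hypothesis, that $||\Omega_\bullet X||$ be a Dold space, is supplied by Proposition \ref{4_1}.1, and whose homotopy-fiber condition was already recorded in \ref{4_6}). Now $u$ is a homotopy equivalence, the fibration $p': PX \to X$ is $\pi_0$-surjective because $X$ is path-connected, and by assumption $X$ is a Dold space; so the converse half of Proposition \ref{1_4} yields that $v$ is a homotopy equivalence, as required.

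The only non-routine point is the identification $E_\bullet(*) \cong P\Omega_\bullet X$ and its contractibility, since everything else is bookkeeping on diagrams already in hand; once that is in place the proposition is a clean application of \ref{1_4}, \ref{1_6}, \ref{4_1}, and \ref{3_3}.
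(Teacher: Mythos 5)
Your proof is correct and follows essentially the same route as the paper: the ``if'' direction via \ref{4_1} and \ref{3_3}, and the ``only if'' direction by specializing to $p:\ast\to X$, identifying $E_\bullet(p)$ with $P\Omega_\bullet X$ to see that $u$ is an equivalence of contractible spaces, and then applying \ref{1_4} to the homotopy pullback square I of \ref{4_7}. The paper's discussion in \ref{4_7} is exactly this argument, so no further comment is needed.
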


Now let $E$ be any based space. Since $v$ is a homotopy equivalence provided $X$ is
a path-connected Dold space, $u$ is a homotopy equivalence.

\begin{prop}\label{4_9}
If $X$ is a path-connected Dold space, then for any based map $p:E\to B$ the maps 
$u$ and
$v$ of \ref{4_7} are homotopy equivalences.
\end{prop}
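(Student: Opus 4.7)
The plan is to read this off the machinery that has already been assembled, in particular the homotopy pullback square I of diagram \ref{4_7} together with Proposition \ref{4_8}. Almost everything needed is stated explicitly in the paragraphs preceding the proposition; the proof amounts to invoking the right results in the right order.

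First I would recall that square I in \ref{4_7} has already been identified as a homotopy pullback. Indeed, by \ref{4_6} the homotopy fiber of $||q_\bullet||$ over the basepoint is $F(p)$, while by definition the homotopy fiber of the path-fibration $P(p)\to X$ is also $F(p)$, and the comparison between the two fibers is the identity. Since $||\Omega_\bullet X||$ is a Dold space by Proposition \ref{4_1}, Proposition \ref{1_6}(2) then shows that square I in \ref{4_7} is a homotopy pullback.

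Next I would apply Proposition \ref{4_8} to conclude that $v:||\Omega_\bullet X||\to X$ is a homotopy equivalence, using the hypothesis that $X$ is path-connected and a Dold space. Having a homotopy pullback square in which the bottom horizontal map $v$ is a homotopy equivalence, Proposition \ref{1_4} immediately yields that the top horizontal map $u:||E_\bullet(p)||\to P(p)$ is also a homotopy equivalence.

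There is essentially no obstacle here: the proposition is a repackaging of what was observed between Propositions \ref{4_8} and \ref{4_9}. The only small point worth double-checking is that the invocation of \ref{1_6}(2) is legitimate, which requires $||\Omega_\bullet X||$ to be a Dold space; but this is exactly the content of \ref{4_1} applied to the simplicial space $\Omega_\bullet X$, whose space of $0$-simplices is a point and thus trivially a Dold space.
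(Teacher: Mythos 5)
Your proof is correct and follows exactly the paper's own route: the paper establishes in \ref{4_7} that square I is a homotopy pullback (via \ref{4_6}, \ref{4_1} and \ref{1_6}), deduces Proposition \ref{4_8} from the case $E=\ast$, and then obtains \ref{4_9} for general $E$ by applying \ref{1_4} to the pullback square with $v$ a homotopy equivalence. Nothing is missing.
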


\begin{rema}\label{4_10}
From \ref{4_9} we obtain an alternative proof of Proposition \ref{3_9}. Let $p:E\to
X$ be a map, $X$ a path-connected Dold space, and suppose the homotopy fiber $F(p)$
is also a Dold space, then $E$ is a Dold space: Consider
$$
||E_\bullet(p)||\stackrel{v}{\to} P(p) \stackrel{r}{\to}E
$$
with $r(e,\alpha)=e$. The maps $v$ and $r$ are homotopy equivalences. Since
$E_0(p)=F(p)$ the space $||E_\bullet(p)||$ is a Dold space by \ref{4_1}, and hence
so is $E$.
\end{rema}

\begin{prop}\label{4_11}
Let $p_\ast:E_\ast\to X_\ast$ be a map of based semisimplicial spaces. Let $F(p_n)$
denote the homotopy fiber of $p_n:E_n\to X_n$. If each $X_n$ is a path-connected Dold
space, then
$$
\xymatrix{
||F(p_\ast)|| \ar[rr] \ar[d] && ||E_\ast|| \ar[d]
\\
\ast  \ar[rr] && ||X_\ast||
}
$$
is a homotopy pullback.
\end{prop}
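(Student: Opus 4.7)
The plan is to bootstrap Proposition~\ref{4_6} to the semisimplicial setting using naturality of the constructions $E_\bullet(-)$ and $\Omega_\bullet(-)$ together with a further application of V.~Puppe's theorem \cite{VP} in the outer semisimplicial direction; the Dold hypothesis on each $X_n$ enters via Propositions~\ref{4_8} and \ref{4_9}, which identify the intermediate realizations with the original spaces.

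For each $n$, applying \ref{4_6} to $p_n: E_n \to X_n$ gives a homotopy pullback
$$
\xymatrix@C=1.3em@R=1em{
F(p_n) \ar[r] \ar[d] & ||E_\bullet(p_n)|| \ar[d]^{||q_{\bullet,n}||} \\
\ast \ar[r] & ||\Omega_\bullet X_n||
}
$$
and by \ref{4_8} and \ref{4_9} the natural maps $u_n: ||E_\bullet(p_n)|| \to P(p_n) \to E_n$ and $v_n: ||\Omega_\bullet X_n|| \to X_n$ from \ref{4_7} are homotopy equivalences. All of these data are natural in $n$ by functoriality of the constructions. Since fat realization preserves dimensionwise homotopy equivalences of semisimplicial spaces, realizing in $n$ yields $||\,n \mapsto ||E_\bullet(p_n)||\,|| \simeq ||E_\ast||$ and $||\,n \mapsto ||\Omega_\bullet X_n||\,|| \simeq ||X_\ast||$, together with a homotopy equivalence of the induced squares in the evident way.

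It then suffices to show that
$$
\xymatrix@C=1.3em@R=1em{
||F(p_\ast)|| \ar[r] \ar[d] & ||\,n \mapsto ||E_\bullet(p_n)||\,|| \ar[d] \\
\ast \ar[r] & ||\,n \mapsto ||\Omega_\bullet X_n||\,||
}
$$
is a homotopy pullback. I would establish this by viewing the bi-semisimplicial object $q_{\bullet,\ast}: E_\bullet(p_\ast) \to \Omega_\bullet(X_\ast)$ as a simplicial object in the inner direction of semisimplicial spaces, and applying V.~Puppe's theorem in the outer semisimplicial direction to the assembly of dimensionwise Pull squares obtained above, using Fubini for fat realization to swap the order of realization where needed. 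The main obstacle is verifying that V.~Puppe's theorem applies in the outer direction: a priori the required Pull property on face-map squares of $||E_\bullet(p_\ast)|| \to ||\Omega_\bullet X_\ast||$ reduces under the identifications $u_n, v_n$ to Pull squares for the face maps of $p_\ast$, which need not hold without further hypotheses. The resolution is presumably that the bi-semisimplicial Pull structure -- Pull in the inner direction only, by construction as before Proposition~\ref{4_6} -- combined with the path-connected Dold condition at each level $n$ is sufficient to propagate the Pull property through the outer realization, bypassing any need for the original face-map squares of $p_\ast$ to be homotopy pullbacks.
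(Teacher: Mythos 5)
Your reduction to the bi-semisimplicial square is correct and is exactly the paper's first step: the dimensionwise equivalences $u_k$, $v_k$ (and $r_k$) coming from \ref{4_8} and \ref{4_9} realize to equivalences, so it suffices to treat the square built from $||[k]\mapsto ||E_\bullet(p_k)||\,||$ and $||[k]\mapsto ||\Omega_\bullet X_k||\,||$. The gap is in what you do next. Applying V.~Puppe's theorem ``in the outer semisimplicial direction'' is not merely hard to justify --- it is the wrong move: in that direction the theorem would require the face-map squares of $p_\ast$ itself to be homotopy pullbacks (which, as you note, is not assumed), and even if they were, its conclusion would relate the $0$-skeleta $||E_\bullet(p_0)||$ and $||\Omega_\bullet X_0||$ to the total realizations, not $||F(p_\ast)||$ and $\ast$. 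Moreover, your proposed resolution --- that the Pull property in the inner direction ``propagates through the outer realization'' --- cannot be taken for granted: fat realization does not preserve homotopy pullbacks in general, and that failure is precisely what makes the proposition nontrivial.

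The paper's argument supplies the missing idea. Using Fubini, first realize in the original direction $k$, obtaining for each $n$ a map $\overline q_n : ||E_n(p_\ast)||\to ||\Omega_n X_\ast||$, and then apply V.~Puppe's theorem in the \emph{inner} direction $n$, whose $0$-skeleta are exactly $||E_0(p_\ast)||=||F(p_\ast)||$ and $||\Omega_0 X_\ast||\simeq\ast$. The required Pull condition on the inner face maps $d^i$ is verified not formally but via the natural splitting $E_n(p_k)\simeq F(p_k)\times\Omega_n X_k$ (obtained from a deformation retraction of $C\bigtriangleup^n$ onto $\bigtriangleup^n\cup[c_0,v_j]$ chosen compatibly with $d^i$ for $j\neq i$). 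Because fat realization preserves products up to homotopy, this splitting survives realization over $k$ to give $||E_n(p_\ast)||\simeq ||F(p_\ast)||\times||\Omega_n X_\ast||$ compatibly with $d^i$, after which the face-map squares become projection squares and are evidently homotopy pullbacks. This product structure is the concrete content behind your ``presumably sufficient'' step; without it the proof is incomplete.
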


\begin{proof}
By the naturality of our constructions we have a commutative diagram of semisimplicial
spaces
$$
\xymatrix{
F(p_\ast) \ar[rr] \ar[d]^{\id} &&
||E_\bullet(p_\ast)|| \ar[rr] \ar[d]^{u_\ast} &&
||\Omega_\bullet X_\ast|| \ar[d]^{v_\ast}
\\
F(p_\ast) \ar@{-}[rr] \ar[d]^{\id} &&
P(p_\ast) \ar[rr] \ar[d]^{r_\ast} &&
X_\ast \ar[d]^{\id}
\\
F(p_\ast) \ar[rr] &&
E_\ast \ar[rr]^{p_\ast} && X_\ast
}
$$
Since the vertical maps are homotopy equivalences in each degree they induce
homotopy equivalences of fat realizations. Hence it suffices to show that
$$
\xymatrix{
||[k] \mapsto F(p_k)|| \ar[rr] \ar[d] &&
||[k] \mapsto ||E_\bullet(p_k)||\;|| \ar@<-3ex>[d]
\\
\ast \ar[rr] &&
||[k] \mapsto ||\Omega_\bullet X_k||\;||
}
$$
is a homotopy pullback. For this we study the map
$$
q_{n,k}:E_n(p_k) \to \Omega_n(p_k)
$$
of bi-semisimplicial spaces. Since its total fat realization is independent of the
order in which we realize, we may first realize with respect to $k$ and obtain a map of
semisimplicial spaces
$$
\overline{q}_n:||E_n(p_\ast)|| \to ||\Omega_nX_\ast||
$$

\textbf{Claim:} $\overline{q}_n$ is a semisimplicial object in Pull, i.e.
$$
\xymatrix{
||E_n(p_\ast)|| \ar[rr]^{d^i}\ar[d]_{\overline{q}_n} &&
||E_{n-1}(p_\ast)|| \ar[d]^{\overline{q}_{n-1}} 
\\
||\Omega_nX_\ast|| \ar[rr]^{d^i} && 
||\Omega_{n-1}X_\ast||
}
$$
is a homotopy pullback for each $n$.

\textbf{Proof:}
Let $j\neq i$. There is a strong deformation retraction of $C\bigtriangleup^n$ to
$\bigtriangleup^n\cup_{v_j}[c_0,v_j]$, where $[c_0,v_j]$ is the line from the cone
point $c_0$ to the $j$-th vertex $v_j$ of $\bigtriangleup^n$. This
deformation retraction can be chosen
compatibly with $d^i$ yielding a commutative square
$$
\xymatrix{
E_n(p_k) \ar[rr]\ar[d]_{d^i} &&
F(p_k)\times \Omega_n X_k \ar[d]^{\id \times d^i}
\\
E_{n-1}(p_k) \ar[rr] && F(p_k)\times \Omega_{n-1} X_k
}
$$
whose horizontal maps are homotopy equivalences. Since the fat realization
preserves products up to homotopy, we obtain a commutative diagram
$$
\xymatrix{
||E_n(p_\ast)|| \ar[rr]\ar[d]_{d^i} &&
||F(p_\ast)||\times ||\Omega_nX_\ast|| \ar[d]^{\id\times d^i}
\\
||E_{n-1}(p_\ast)|| \ar[rr] &&
||F(p_\ast)||\times ||\Omega_{n-1}X_\ast||
}
$$
whose horizontal maps are homotopy equivalences. So it suffices to show that
$$
\xymatrix{
||F(p_\ast)||\times ||\Omega_nX_\ast||
\ar[rr]^{\id\times d^i}\ar[d]^{\textrm{proj.}}&&
||F(p_\ast)||\times ||\Omega_nX_\ast|| \ar[d]^{\textrm{proj.}}
\\
||\Omega_nX_\ast||  \ar[rr]^{d^i} &&
||\Omega_nX_\ast||
}
$$
is a homotopy pullback. But this is evident. This proves the claim.

We now apply V. Puppe's result \cite{VP} again: Since $||F(p_\ast)||$ is  the
$0$-skeleton of $||[n]\mapsto ||E_n(p_\ast)||\;||$, we obtain a homotopy
pullback
$$
\xymatrix{
||F(p_\ast)|| \ar[rr]\ar[d] &&
||E_\bullet(p_\ast)|| \ar[d]
\\
\ast \ar[rr] &&
||\Omega_\bullet X_\ast||
}
$$
\end{proof}

The basic idea of the argument of the previous proof is due to D. Puppe. He used it
so show that the fat realization commutes with the loop space functor for
path-connected semisimplicial Dold spaces.

\begin{prop}\label{4_12}
Let $X_\bullet$ be a based semisimplicial space such that each $X_n$ is a
path-connected Dold space. Then there is a canonical homotopy equivalence
$$
||\Omega X_\bullet|| \to \Omega||X_\bullet||
$$
In particular, $\Omega||X_\bullet||$ is a Dold space if $\Omega X_0$ is a Dold space
(e.g. if $X_0$ is based contractible).
\end{prop}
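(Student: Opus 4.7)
The plan is to exploit Proposition \ref{4_11} by applying it to the degreewise path--loop fibration. For each $n$ let $PX_n$ denote the based path space of $X_n$, i.e.\ the space of paths $\alpha\colon I\to X_n$ with $\alpha(0)=\ast$, and $p_n\colon PX_n\to X_n$, $\alpha\mapsto \alpha(1)$. Evaluated in each degree, this is a fibration with fiber $\Omega X_n$, and in particular $F(p_n)\simeq \Omega X_n$. Since the constructions $P$ and $\Omega$ are functorial in based maps, we obtain a map $p_\ast\colon PX_\ast\to X_\ast$ of based semisimplicial spaces with $F(p_\ast)=\Omega X_\ast$.

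Now I would invoke Proposition \ref{4_11}: since each $X_n$ is a path-connected Dold space, the square
$$
\xymatrix{
||\Omega X_\ast|| \ar[r] \ar[d] & ||PX_\ast|| \ar[d]^{||p_\ast||}
\\
\ast \ar[r] & ||X_\ast||
}
$$
is a homotopy pullback. Next, $||PX_\ast||$ is contractible: each $PX_n$ is contractible via the standard straight-line homotopy, so the augmentation $PX_\ast\to \ast_\ast$ is a degreewise homotopy equivalence of semisimplicial spaces, and such maps induce homotopy equivalences of fat realizations (this is the key homotopy invariance property of $||-||$). Once $||PX_\ast||$ is contractible, the homotopy fiber of $||p_\ast||$ over the basepoint is $\Omega||X_\ast||$, hence $||\Omega X_\ast||\simeq \Omega||X_\ast||$.

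To make the equivalence canonical, I would describe the map $||\Omega X_\ast||\to \Omega||X_\ast||$ directly as the adjoint of the evident pairing: for each $n$, the maps
$$
\Omega X_n\times \bigtriangleup^n\times I\to X_n\times \bigtriangleup^n,\qquad (\omega,t,s)\mapsto(\omega(s),t),
$$
assemble to a based map $||\Omega X_\ast||\wedge S^1\to ||X_\ast||$, whose adjoint is the desired canonical map. Checking that this canonical map agrees (up to homotopy) with the comparison produced by the homotopy pullback above is the main verification; it follows from the naturality of the construction $E_\bullet(p)$ in Section 4, applied degreewise to $p_n$, together with the fact that the horizontal composite in the diagram of \ref{4_7} can be identified with the evaluation pairing described here.

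Finally, the addendum about Dold spaces follows at once: $||\Omega X_\ast||$ is a Dold space by Proposition \ref{4_1}(1) whenever $\Omega X_0$ is a Dold space (e.g.\ if $X_0\simeq \ast$, in which case $\Omega X_0$ is contractible), and homotopy equivalent spaces are simultaneously Dold by Corollary \ref{3_3}. The main obstacle I expect is the bookkeeping to verify that the canonical map coincides up to homotopy with the equivalence read off from Proposition \ref{4_11}; once that is done the argument is essentially formal.
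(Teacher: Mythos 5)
Your proposal is correct and takes essentially the same approach as the paper: the paper applies Proposition \ref{4_11} to the basepoint inclusion $p_\bullet:\ast\to X_\bullet$ of the semisimplicial point, whose homotopy fiber is literally $\Omega X_n$ and whose realization $||\ast||$ is contractible, while you use the degreewise path fibration $PX_\ast\to X_\ast$ --- the same argument up to a degreewise homotopy equivalence over $X_\ast$. The canonicity verification you flag as the main remaining work is not carried out in the paper either, which simply reads the equivalence off from the homotopy pullback.
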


\begin{proof}
Let $\ast$ denote the semisimplicial point. Apply \ref{4_11} to the map $p_\bullet:
\ast\to X_\bullet$. Since $F(p_\bullet)=\Omega X_\bullet$ and $||\ast||$ is contractible,
the statement follows.

\end{proof}

We will see that loop spaces of Dold spaces need not be Dold spaces. But we have

\begin{prop}\label{4_13}
If $X$ is a path-connected h-wellpointed Dold space, then $\Omega\Sigma X$ is a Dold
space.
\end{prop}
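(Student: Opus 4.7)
The plan is to apply Theorem \ref{1_1} to identify $\Omega\Sigma X$ with the James construction $JX$, and then to establish that $JX$ is a Dold space by building it up via its word-length filtration, using Corollary \ref{3_5} to handle the attaching pushouts and Proposition \ref{3_7} to pass to the colimit.

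First, I would use the whiskering construction of Remark \ref{4_3a} to replace $X$ by the wellpointed space $X_I$. Since $X$ is $h$-wellpointed, the canonical map $X_I\to X$ is a based homotopy equivalence, which induces a homotopy equivalence $\Omega\Sigma X_I\simeq \Omega\Sigma X$; moreover $X_I$ is path-connected and, by Corollary \ref{3_3}, a Dold space. Thanks to Corollary \ref{3_3} it is enough to prove $\Omega\Sigma X_I$ is a Dold space, so we may assume from the outset that $X$ is wellpointed. Theorem \ref{1_1} then provides a homotopy equivalence $JX\simeq \Omega\Sigma X$, and invoking Corollary \ref{3_3} a second time reduces the problem to showing that $JX$ is a Dold space.

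Next I would attack $JX$ via its standard filtration $J_0 X\subset J_1 X\subset\ldots$, where $J_n X$ denotes the image of $X^n$ in $JX$. These fit into pushout squares
$$
\xymatrix{
T_n X \ar@{^{(}->}[r] \ar[d] & X^n \ar[d] \\
J_{n-1} X \ar[r] & J_n X
}
$$
with $T_n X=\bigcup_{i=1}^{n}X^{i-1}\times\{\ast\}\times X^{n-i}\subset X^n$ the fat wedge. Because $X$ is wellpointed, $\{\ast\}\hookrightarrow X$ is a closed cofibration, and iterated pushout--products show $T_n X\hookrightarrow X^n$ is a closed cofibration, hence an $h$-cofibration. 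I would then induct on $n$, starting from $J_0 X=\ast$: Corollary \ref{3_10} yields that $X^n$ is a Dold space, the inductive hypothesis gives the same for $J_{n-1}X$, and Corollary \ref{3_5} applied to the pushout above shows $J_n X$ is a Dold space.

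Finally, each inclusion $J_n X\hookrightarrow J_{n+1}X$ is likewise a cofibration (again by wellpointedness), so Proposition \ref{3_7}(2) applied to the sequence $\{J_n X\}_{n\ge 0}$ of Dold spaces gives that $JX=\colim_n J_n X$ is a Dold space, completing the argument. The only slightly delicate step is the verification that the fat wedge inclusion $T_n X\hookrightarrow X^n$ is a cofibration, which is a standard iterated pushout--product computation; this is the reason for passing to a strictly wellpointed model at the outset rather than working with the given $h$-wellpointed $X$.
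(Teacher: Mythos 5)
Your proof is correct, but it takes a genuinely different route from the paper's. The paper stays inside its own simplicial machinery: it forms the simplicial space $X_\bullet$ whose degree-$n$ term is the $n$-fold wedge of $X$ (so that $|X_\bullet|=\Sigma X$), applies Proposition \ref{4_11} to $\ast\to X_\bullet$ to identify $\Omega\Sigma X$ up to homotopy with $||\Omega X_\bullet||$, and concludes by Proposition \ref{4_1} because the degree-zero homotopy fiber is a point. You instead invoke Puppe's Theorem \ref{1_1} to trade $\Omega\Sigma X$ for the James construction $JX$ and then verify directly that $JX$ is a Dold space via the word-length filtration; each step you take (whiskering, the fat-wedge pushout squares, the pushout--product argument for the cofibration $T_nX\hookrightarrow X^n$, and the appeals to \ref{3_10}, \ref{3_5} and \ref{3_7}) is sound. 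Your route is more elementary on the internal side---only the closure properties of Section 3 are needed, and your filtration argument is in effect a hands-on special case of Proposition \ref{5_4} for the associativity operad---but it outsources the essential content to Theorem \ref{1_1}, a nontrivial external theorem, whereas the paper's proof is self-contained and showcases Proposition \ref{4_11} as the engine. Two minor caveats: there is no circularity in citing Theorem \ref{1_1}, since its known proofs require $\Sigma X$ (a Dold space by \ref{3_6}), not $\Omega\Sigma X$, to be numerably contractible; and Theorem \ref{1_1} is stated in $k\Top^\ast$, so a word is needed (cf. Propositions \ref{5_1} and \ref{5_2}) if one insists on the statement for arbitrary topological spaces.
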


\begin{proof}
By Remark \ref{4_3a} we may assume that $X$ is wellpointed.
Let $X_\bullet$ be the simplicial space which is the $n$-fold wedge of $X$ in degree
$n$. Boundaries $d^i$ are the folding map for $0<i<n$ and projections for $i=0,n$.
Degeneracies are the obvious injections. Apply \ref{4_11} to $p_\bullet:\ast\to
X_\bullet$. Since $X_\bullet$ is proper, we have homotopy equivalences
$$
||F(p_\bullet)|| = ||\Omega X_\bullet||
\simeq \Omega ||X_\bullet||\simeq \Omega |X_\bullet|=\Omega\Sigma X.
$$
Since $F(p_0)$ is a point, $||F(p_\bullet)||$ is a Dold space.
\end{proof}

In his proof of \ref{4_12} Puppe used the nerve $N_\bullet\Omega_M X$ of the Moore loop
space $\Omega_MX$ of $X$ rather than $\Omega_\bullet X$. In fact, there is a
simplicial map
$$
\alpha_\bullet:N_\bullet\Omega_M X\to\Omega_\bullet X
$$
which is degreewise a homotopy equivalence inducing a homotopy equivalence
\cite[Appendix]{Brink}
$$
||\alpha_\bullet||: ||N_\bullet\Omega _MX||\to || \Omega_\bullet X||.
$$
We note that $N_\bullet\Omega_M X$ is proper if $X$ is wellpointed, because
$\Omega_MX$ is wellpointed respectively h-wellpointed
if $X$ is \cite[(11.3)]{DKP}. We obtain

\begin{coro}\label{4_14}
Suppose $X$ is a wellpointed path-connected space. Then $X$ is a Dold space iff
$v\circ|\alpha_\ast|:\mathcal{B}\Omega_MX\to X$ is a homotopy equivalence, where $B$
is the classifying space functor and $v$ is the map of \ref{4_8}.
\hfill$\square$
\end{coro}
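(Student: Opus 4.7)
The plan is to deduce Corollary \ref{4_14} from Proposition \ref{4_8} by showing that the composite $v\circ|\alpha_\ast|$ and the counit $v:||\Omega_\bullet X||\to X$ have the same homotopy-theoretic status: one is a homotopy equivalence if and only if the other is.

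First I would fit $v\circ|\alpha_\ast|$ into a commutative diagram
$$
\xymatrix{
\mathcal{B}\Omega_MX = |N_\bullet\Omega_MX| \ar[r]^-{|\alpha_\ast|} \ar[d] &
|\Omega_\bullet X| \ar[d]
\\
||N_\bullet\Omega_MX|| \ar[r]^-{||\alpha_\bullet||} &
||\Omega_\bullet X|| \ar[r]^-v &
X
}
$$
where the vertical maps are the natural comparison maps from the fat to the slim realization, and observe that the composite along the bottom row postcomposed with the left vertical inverse is (up to the canonical identifications) exactly $v\circ|\alpha_\ast|$. The key inputs are: (a) $||\alpha_\bullet||$ is a homotopy equivalence, as recalled just before the statement and proved in \cite[Appendix]{Brink}; and (b) since $X$ is wellpointed, $\Omega_MX$ is wellpointed by \cite[(11.3)]{DKP}, so $N_\bullet\Omega_MX$ is proper and its fat-to-slim comparison map $||N_\bullet\Omega_MX||\to|N_\bullet\Omega_MX|=\mathcal{B}\Omega_MX$ is therefore a homotopy equivalence.

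Combining (a) and (b), the canonical map $\mathcal{B}\Omega_MX\to||\Omega_\bullet X||$ obtained as $||\alpha_\bullet||$ composed with the inverse of the left vertical map is a homotopy equivalence. Consequently, $v\circ|\alpha_\ast|$ is a homotopy equivalence if and only if $v:||\Omega_\bullet X||\to X$ is a homotopy equivalence. By Proposition \ref{4_8}, the latter holds if and only if $X$ is a path-connected Dold space, which is precisely the assertion of the corollary.

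The only real subtlety is bookkeeping with the two kinds of realization: we must know that $N_\bullet\Omega_MX$ is proper to identify $\mathcal{B}\Omega_MX$ with $||N_\bullet\Omega_MX||$ up to homotopy, and this is exactly where the hypothesis that $X$ be wellpointed (rather than merely h-wellpointed) enters, via the wellpointedness of the Moore loop space. Beyond that, the proof is formal: everything follows from Proposition \ref{4_8} combined with the properties of $\alpha_\bullet$ cited in the paragraph preceding the corollary.
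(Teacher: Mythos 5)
Your argument is correct and is essentially the proof the paper intends (the corollary is stated with a $\square$ because the preceding paragraph already supplies it): you invoke that $\|\alpha_\bullet\|$ is a homotopy equivalence, that properness of $N_\bullet\Omega_MX$ (from wellpointedness of $\Omega_MX$) makes the fat-to-slim comparison $\|N_\bullet\Omega_MX\|\to\mathcal{B}\Omega_MX$ a homotopy equivalence, and then apply Proposition \ref{4_8}. The only cosmetic issue is that the vertical arrows in your diagram are drawn from slim to fat, whereas the natural comparison maps go from fat to slim; your prose makes clear you mean their homotopy inverses, so nothing is lost.
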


The following result is an extension of Proposition \ref{4_11}.

\begin{prop}\label{4_15}
Given a commutative diagram of based simplicial spaces
$$
\xymatrix{
A_\ast \ar[rr]^{f_\ast} \ar[d]_{q_\ast} &&
E_\ast \ar[d]^{p_\ast}
\\
B_\ast \ar[rr]^{g_\ast} && X_\ast
}
$$
which is a homotopy pullback in each dimension. If each $B_n$ and each $X_n$ is a
path-connected Dold space, then
$$
\xymatrix{
||A_\ast|| \ar[rr]^{||f_\ast||} \ar[d]^{||q_\ast|} &&
||E_\ast|| \ar[d]^{||p_\ast||}
\\
||B_\ast|| \ar[rr]^{||g_\ast||} &&
||X_\ast||
}
$$
is a homotopy pullback.
\end{prop}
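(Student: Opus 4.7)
The plan is to compare the homotopy fibers of the realized vertical maps $||p_\ast||$ and $||q_\ast||$ via Proposition \ref{4_11}, and then apply Proposition \ref{1_6}(2) to conclude that the square of realizations is a homotopy pullback.

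First I apply Proposition \ref{4_11} separately to $p_\ast$ and to $q_\ast$; the hypotheses are met because each $X_n$ and each $B_n$ is a path-connected Dold space. This identifies $||F(p_\ast)||$ with the homotopy fiber of $||p_\ast||$ over the basepoint and $||F(q_\ast)||$ with the homotopy fiber of $||q_\ast||$. The map $(f_\ast, g_\ast)$ induces by functoriality a semisimplicial map $F(q_\ast) \to F(p_\ast)$, and by naturality of the constructions in Section 4 (in particular the fiber sequence \ref{4_7}), its fat realization represents up to homotopy the natural comparison between the two homotopy fibers.

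Next, since the $n$-th level square is a homotopy pullback, Proposition \ref{1_6}(1) gives that $F(q_n) \to F(p_n)$ is a homotopy equivalence for every $n$. Fat realization of semisimplicial spaces preserves degreewise homotopy equivalences, so $||F(q_\ast)|| \to ||F(p_\ast)||$ is a homotopy equivalence; by the previous step this is, up to homotopy, the natural comparison of basepoint homotopy fibers of $||q_\ast||$ and $||p_\ast||$.

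To finish, I would apply Proposition \ref{1_6}(2) to the given square, viewed with $||q_\ast||$ and $||p_\ast||$ as the horizontal arrows. This requires the base $||B_\ast||$ to be a Dold space, which holds by Proposition \ref{4_1}(1) since $B_0$ is a Dold space, and requires the fiber comparison to be a homotopy equivalence over every $b \in ||B_\ast||$. Since $B_0$ is path-connected and $B_0 \hookrightarrow ||B_\ast||$ is surjective on path-components, $||B_\ast||$ is path-connected, so parallel transport of homotopy fibers along paths upgrades the basepoint equivalence to an equivalence over every $b$. The main obstacle is the naturality claim in the first step, namely that $||F(q_\ast)|| \to ||F(p_\ast)||$ really represents the comparison of homotopy fibers on realizations; this is a routine, if tedious, unwinding of the definitions in Section 4.
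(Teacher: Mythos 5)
Your proposal is essentially correct and shares its core with the paper's proof: both apply Proposition \ref{4_11} to $p_\ast$ and $q_\ast$, and both use that the level-wise fiber comparison $F(q_n)\to F(p_n)$ is a homotopy equivalence (via \ref{1_6}(1)) so that its fat realization is one. Where you diverge is in the concluding step. The paper assembles the two instances of \ref{4_11} into a cube whose front and back faces are homotopy pullbacks and whose left face is a square of homotopy equivalences over a point; it then concludes purely by pasting: $(L)+(F)=(B)+(R)$ is a homotopy pullback by \ref{1_5}(1), and since $(B)$ is one and $\ast\to\|B_\ast\|$ is surjective on path components with $\|B_\ast\|$ a Dold space, \ref{1_5}(2) yields that $(R)$ is a homotopy pullback. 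You instead invoke \ref{1_6}(2) directly, which forces you to carry two extra burdens that the cube argument sidesteps entirely: (a) the identification of $\|F(q_\ast)\|\to\|F(p_\ast)\|$ with the canonical comparison map of homotopy fibers of $\|q_\ast\|$ and $\|p_\ast\|$ (your ``routine unwinding''), and (b) the upgrade from the basepoint fiber to all fibers over $\|B_\ast\|$ via transport along paths, which requires knowing that the fiber-comparison maps commute up to homotopy with the transport equivalences, not merely that the fibers are abstractly equivalent. Both points are standard and can be made precise, so your argument does close; but the paper's route through \ref{1_5} needs neither, since the surjectivity-on-$\pi_0$ hypothesis in \ref{1_5}(2) absorbs exactly the work your transport step performs, and the pasting formalism never requires naming the fiber comparison map at all. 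You should either fill in (a) and (b) or switch to the pasting argument.
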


\begin{proof}
From \ref{4_11} we obtain a diagram
$$
\xymatrix@R=2ex@C=2ex{
||F(q_\ast)|| \ar[rrr] \ar[dd]  \ar[rd] && &
||A_\ast|| \ar[rd] \ar[dd] |!{[d];[d]}\hole
\\
& ||F(p_\ast)|| \ar[rrr] \ar[dd] &&&
||E_\ast|| \ar[dd]
\\
\ast \ar[rd] \ar[rrr] |!{[rr];[r]}\hole &&&
||B_\ast|| \ar[rd]
\\
& \ast \ar[rrr] &&& 
||X_\ast||
}
$$
whose front face $(F)$ and back face $(B)$ are homotopy pullbacks. Since the map
$F(q_\ast)\to F(p_\ast)$ is a homotopy equivalence in each dimension by assumption,
its realization is a homotopy equivalence. Hence the left face $(L)$ is a homotopy
pullback. If $(R)$ denotes the right face, we find that $(B)+(R)$ is a homotopy
pullback, because $(L)+(F)$ is one. Hence $(R)$ is a homotopy pullback by \ref{1_5}.
\end{proof}

\section{$\mathbf{k}$-spaces and free algebras over operads}
Throughout this section we work in the category $k$Top of $k$-spaces and its based
version $k\Top^\ast$. Recall that $X$ is a $k$-space if a subset $U\subset X$ is
open precisely if $f^{-1}(U)$ is open for all maps $f:C\to X$ and all compact
Hausdorff spaces $C$. The inclusion functor $i:k\Top\subset\Top$ has a right adjoint
$k:\Top\to k\Top$ obtained from $X$ by declaring the subsets $U$ satisfying the
condition above as open. The counit of this adjunction
$$
ik(X)\to X
$$
is the identity on underlying sets. 
Hence the topology of $k(X)$ is finer than the one of $X$, and we
obtain

\begin{prop}\label{5_1}
If $X$ is a Dold space so is $k(X)$.
\end{prop}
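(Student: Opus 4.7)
The plan is to exploit the characterization of Dold spaces given in Corollary \ref{2_8}(3). Start with a family of maps $\{f_j: X\to\mathbb{R}_+;\ j\in J\}$ such that $\mathcal{U}=\{U_j:=f_j^{-1}(]0,\infty[);\ j\in J\}$ is a locally finite ambiently contractible cover of $X$. Since the counit $ik(X)\to X$ is continuous (indeed the identity on underlying sets), each $f_j$ becomes a continuous map $\tilde f_j: k(X)\to\mathbb{R}_+$ with the same underlying function, so the preimages $\tilde f_j^{-1}(]0,\infty[)$ coincide, as sets, with the $U_j$.

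Next I would verify that this cover still works in $k(X)$. Local finiteness is automatic: the topology on $k(X)$ is finer than that on $X$, so every $X$-neighborhood is a $k(X)$-neighborhood, and local finiteness is inherited. It remains to show that each $U_j$, now equipped with the subspace topology from $k(X)$, is ambiently contractible in $k(X)$.

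This is the main step. Let $H: U_j\times I\to X$ be a nullhomotopy of the inclusion $U_j\hookrightarrow X$; the goal is to upgrade $H$ to a continuous map into $k(X)$ when $U_j$ is given the $k(X)$-subspace topology (call this space $U_j'$). The standard facts I will invoke about $k$-spaces are: open subspaces of $k$-spaces are $k$-spaces, and the product of a $k$-space with a locally compact Hausdorff space (like $I$) is again a $k$-space. Since $U_j$ is open in $X$, it is open in $k(X)$, so $U_j'$ is a $k$-space and $U_j'\times I$ is a $k$-space. The composite
$$
U_j'\times I\xrightarrow{\mathrm{id}} U_j\times I\xrightarrow{H} X
$$
is continuous in $\Top$ (the first arrow is continuous because $U_j'$ has the finer topology). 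By the adjunction $i\dashv k$, any continuous map from a $k$-space to $X$ factors uniquely as a continuous map into $k(X)$, so $H$ lifts to a continuous homotopy $\widetilde H: U_j'\times I\to k(X)$, which is the desired nullhomotopy of the inclusion $U_j'\hookrightarrow k(X)$.

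Applying Corollary \ref{2_8}(3) to the family $\{\tilde f_j\}$ then concludes that $k(X)$ is a Dold space. The only real subtlety is the passage of the ambient contraction from $\Top$ to $k\Top$, which is handled by the elementary stability properties of $k$-spaces under open subspaces and products with locally compact Hausdorff factors.
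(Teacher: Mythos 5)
Your proof is correct and takes essentially the same route as the paper, which merely observes that the counit $ik(X)\to X$ is the identity on underlying sets, so that the topology of $k(X)$ is finer, and leaves the rest implicit. Your write-up supplies exactly the missing details, in particular the one nontrivial point --- transporting the nullhomotopies of the cover elements into $k(X)$ via the adjunction $i\dashv k$, using that open subspaces of $k$-spaces and their products with $I$ are again $k$-spaces.
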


Since $i$ preserves colimits and $k$ limits, we moreover have

\begin{prop}\label{5_2}
The results of the previous sections also hold in the category $k\Top$ respectively
$k\Top^\ast$.
\end{prop}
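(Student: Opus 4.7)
The plan is to observe that every construction used in Sections 2--4 is either a colimit-type construction (coproducts, pushouts, mapping cylinders and cones, mapping telescopes, realizations of simplicial and semisimplicial spaces, homotopy colimits) or a limit-type construction (products, pullbacks, homotopy pullbacks, homotopy fibers, path spaces, mapping path spaces). First I would note that being a Dold space is intrinsic to a $k$-space: continuous maps from a $k$-space into $I$ and nullhomotopies of inclusions into a $k$-space coincide in $k\Top$ and in $\Top$, so a $k$-space $X$ admits a Dold cover in $k\Top$ iff $iX$ admits one in $\Top$.

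Next I would exploit the adjunction $i\dashv k$ of the preceding paragraph. Because $i$ preserves colimits, every colimit-type construction carried out in $k\Top$ on $k$-spaces has the same underlying space as the $\Top$-colimit; hence the relevant propositions from Sections 2--4 apply directly to yield their $k\Top$-versions. Because $k$ preserves limits, every limit-type construction in $k\Top$ is obtained by applying $k$ to the corresponding $\Top$-limit, and by Proposition \ref{5_1} this preserves the Dold property, so again the $\Top$-statements imply the $k\Top$-statements.

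For the mixed results (notably \ref{3_9}, \ref{4_11}, \ref{4_13}, \ref{4_15}), which combine fibrations, homotopy fibers, and realizations, I would check that the auxiliary notions used, namely homotopy equivalence, $h$-cofibration, and $h$-fibration, are defined purely in terms of homotopies of maps between $k$-spaces and therefore transfer between the two categories without change. The point is that each diagram of $k$-spaces appearing in such a proposition can be evaluated limit-wise with $k$ applied and colimit-wise left unchanged, and the resulting Dold-space conclusions survive both operations.

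The main obstacle is not a conceptual one but the bookkeeping of checking each result individually. My plan would therefore be to work through one representative mixed result (say \ref{4_11}) explicitly, isolating the single application of $k$ that is needed to land the homotopy pullback in $k\Top$, and then remark that every other result fits the same template: replace each limit by its $k$-ification, leave colimits untouched, and invoke \ref{5_1} to preserve the Dold property.
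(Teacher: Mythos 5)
Your proposal is correct and follows essentially the same route as the paper, whose entire ``proof'' is the single observation that $i$ preserves colimits and $k$ preserves limits, combined with Proposition \ref{5_1}; you simply spell out the bookkeeping (intrinsic nature of the Dold property for $k$-spaces, case split into colimit-type and limit-type constructions) that the paper leaves implicit.
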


We include $k\Top$ into our considerations because Theorems \ref{1_1} and \ref{1_2}
are phrased in $k\Top^\ast$.

In his proof of Theorem \ref{1_2} Meiwes needed to show that $\mathbb{C}^\ast_n(X)$ and the
$k$-fold symmetric product $SP_k(X)$ are Dold spaces if $X$ is a (wellpointed for
$\mathbb{C}^\ast_n(X)$) Dold space. He did this by explicitely constructing Dold covers. We will
obtain these results from more general easy to prove facts.

Let $\mathcal{P}$ be a topological operad. We call $\mathcal{P}$ reduced if
$\mathcal{P}(0)$ consists of a single element. If $X$ is $\mathcal{P}$-space and
$\mathcal{P}$ is reduced, the single element of $\mathcal{P}(0)$ determines a basepoint
in $X$. Let $\mathcal{P}\Top$ be the category of $\mathcal{P}$-spaces. We have
a forgetful functor
$$
U:\mathcal{P}\Top \to k\Top
$$
and, if $\mathcal{P}$ is reduced,
$$
U^\ast: \mathcal{P}\Top \to {k\Top}^\ast.
$$
They have left adjoints
$$
\mathbb{P}:k\Top\to \mathcal{P}\Top \quad\textrm{ respectively }\quad
\mathbb{P}^\ast:{k\Top}^\ast\to\mathcal{P}\Top
$$
defined by
$$
\mathbb{P}(X)=\coprod\limits^\infty_{n=0}\mathcal{P}(n)\times_{\Sigma_n}X^n
\quad \textrm{ and }\quad
\mathbb{P}^\ast(X)\left(\coprod\limits^\infty_{n=0}\mathcal{P}(n)\times_{\Sigma_n}
X^n\right) /\sim
$$

The relation $\sim$ in the definition of $\mathbb{P}^\ast(X)$ is defined as follow:
Let $\ast\in\mathcal{P}(0)$ denote the single element and let
$$
\begin{array}{ll}
\sigma_i:\mathcal{P}(k) \to \mathcal{P}(k-1), &
\alpha\mapsto \alpha\circ(\id_i\times\ast\times \id_{k-i-1})
\\
s_i: X^{k-1} \to X^k, &
(x_1,\ldots,x_{k-1})\mapsto (x_1,\ldots,x_i,\ast,x_{i+1},\ldots,x_{k-1})
\end{array}
$$
Then $(\sigma_i(\alpha),x)\sim (\alpha,s_i(x))$.

\begin{prop}\label{5_3}
Let $\mathcal{P}$ be an operad such that each $\mathcal{P}(n)/\Sigma_n$ is a Dold
space, and let $X\in k\Top$ be a path-connected Dold space.
Then $\mathbb{P}(X)$ is a Dold space.
\end{prop}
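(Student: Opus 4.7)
By Proposition 3.1(2) it suffices to show each summand $Y_n := \mathcal{P}(n) \times_{\Sigma_n} X^n$ is a Dold space. I plan to apply Corollary 2.8(3) by constructing an explicit cover. Let $q: \mathcal{P}(n) \to \mathcal{P}(n)/\Sigma_n$ and $q': \mathcal{P}(n) \times X^n \to Y_n$ be the quotient projections, and write $\pi: Y_n \to \mathcal{P}(n)/\Sigma_n$ for the induced map. Pull back a Dold numeration $\{f_\alpha\}$ of $\mathcal{P}(n)/\Sigma_n$ to the $\Sigma_n$-invariant family $\tilde f_\alpha := f_\alpha \circ q$ on $\mathcal{P}(n)$, and, using path-connectedness of $X$, choose a basepoint $x_0 \in X$ together with a Dold numeration $\{g_\beta\}$ of $X$ in which every support $V_\beta = g_\beta^{-1}((0,1])$ contracts in $X$ to $x_0$ via some $\eta_\beta: V_\beta \times I \to X$. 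For each pair $(\alpha, [\boldsymbol{\beta}])$ with $[\boldsymbol{\beta}] \in B^n/\Sigma_n$ a $\Sigma_n$-orbit of index tuples, symmetrize:
\[
F_{\alpha,[\boldsymbol{\beta}]}(\alpha, x_1,\ldots,x_n) = \tilde f_\alpha(\alpha) \sum_{(\beta'_1,\ldots,\beta'_n)\in[\boldsymbol{\beta}]} g_{\beta'_1}(x_1)\cdots g_{\beta'_n}(x_n).
\]
This is invariant under the diagonal $\Sigma_n$-action and hence descends to $\bar F_{\alpha,[\boldsymbol{\beta}]}: Y_n \to \mathbb{R}_+$.

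The family $\{\bar F_{\alpha,[\boldsymbol{\beta}]}^{-1}((0,\infty))\}$ covers $Y_n$ and is locally finite; both assertions reduce to the corresponding properties of $\{f_\alpha\}$ and $\{g_\beta\}$ via the openness of the finite-group quotient $q'$. The nontrivial point is ambient contractibility of each $\bar F^{-1}((0,\infty))$ in $Y_n$. A direct inspection shows that $\bar F_{\alpha,[\boldsymbol{\beta}]}^{-1}((0,\infty))$ coincides with the $q'$-image of a single representative product $\tilde f_\alpha^{-1}((0,1]) \times V_{\beta_1}\times\cdots\times V_{\beta_n}$. I would then attempt a two-stage deformation: first apply $(\eta_{\beta_1},\ldots,\eta_{\beta_n})$ coordinate-wise to deform into the $\Sigma_n$-fixed locus $\tilde f_\alpha^{-1}((0,1]) \times \{(x_0,\ldots,x_0)\}$, whose $q'$-image is precisely $U_\alpha := f_\alpha^{-1}((0,1])$ sitting inside $Y_n$ via the section $[\alpha] \mapsto [\alpha, x_0,\ldots,x_0]$ of $\pi$; second, pull the ambient contraction of $U_\alpha$ inside $\mathcal{P}(n)/\Sigma_n$ back along this section to finish the contraction in $Y_n$.

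The main obstacle is making the first stage well defined on the $q'$-image: a point $[\alpha, x] \in \bar F^{-1}((0,\infty))$ may admit two lifts $(\alpha, x)$ and $(\sigma\alpha, \sigma x)$ simultaneously lying in the chosen product — which can happen whenever $x$ lies in an overlap of several translates of $V_{\beta_1}\times\cdots\times V_{\beta_n}$ and $\sigma \in \Sigma_n$ is not in the stabilizer of the tuple $(\beta_1,\ldots,\beta_n)$ — and the two coordinate-wise formulas would then produce different paths in $Y_n$. For stabilizer elements the compatibility is automatic, but in general I would resolve the coherence issue either by refining $\{V_\beta\}$ to eliminate the ambiguity, or by routing the first-stage homotopy through an auxiliary mapping cylinder in which a $\Sigma_n$-invariant partition of unity lets the several candidate homotopies be blended. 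Once this is in place, Corollary 2.8(3) identifies $Y_n$ as a Dold space.
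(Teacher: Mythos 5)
Your reduction to the summands $Y_n=\mathcal{P}(n)\times_{\Sigma_n}X^n$, the symmetrized functions $\bar F_{\alpha,[\boldsymbol{\beta}]}$, and the verification that their cozero sets form a locally finite cover of $Y_n$ are all fine. The gap is exactly the one you flag and then defer: the ambient contractibility of $q'\bigl(\tilde f_\alpha^{-1}(]0,1])\times V_{\beta_1}\times\cdots\times V_{\beta_n}\bigr)$ in $Y_n$. The coordinatewise homotopy $(\eta_{\beta_1},\dots,\eta_{\beta_n})$ descends along $q'$ only where the sole identifications come from the stabilizer of the tuple $(\beta_1,\dots,\beta_n)$; at a point with a lift $(p,x)$ satisfying $x_i\in V_{\beta_i}\cap V_{\beta_{\sigma(i)}}$ for some non-stabilizing $\sigma$, the two formulas give genuinely different paths in $Y_n$, because $\eta_{\beta_i}$ and $\eta_{\beta_j}$ are unrelated homotopies. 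Neither proposed repair closes this: refining $\{V_\beta\}$ cannot eliminate overlaps of distinct cover elements (that would force a clopen partition of a path-connected space), and ``blending'' several candidate homotopies via an invariant partition of unity presupposes a way to average paths in $Y_n$, which you do not have. This equivariance problem is precisely what makes the explicit-cover route delicate; it is the construction Meiwes carried out by hand for $\mathbb{C}^\ast_n(X)$ and $SP_k(X)$, and which the paper explicitly sets out to avoid.

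For contrast, the paper's proof never touches covers of $Y_n$: after whiskering it replaces $X$ by the proper simplicial space $N_\bullet\Omega_M X_I$, whose realization is $X_I\simeq X$ by \ref{4_14}; May's homeomorphism $|\mathbb{P}(N_\bullet\Omega_M X_I)|\cong\mathbb{P}(|N_\bullet\Omega_M X_I|)$ from \ref{5_5} then exhibits $\mathbb{P}(X)$, up to homotopy, as the realization of a proper simplicial space whose $0$-th space is $\mathbb{P}(\ast)=\coprod_n\mathcal{P}(n)/\Sigma_n$, a Dold space by hypothesis, so \ref{4_1} and \ref{3_3} finish the argument. If you want to keep your direct approach, you must engineer the cover so that each cover set admits a canonical lift modulo the stabilizer (for instance by a dominance trick on the products $\prod_i g_{\beta_i}(x_i)$), and that construction is the actual content of the proof, not a detail to be deferred.
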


\begin{prop}\label{5_4}
Let $X\in k\Top^\ast$ be a wellpointed path-connected Dold sapce. Then
$\mathbb{P}^\ast(X)$ is a Dold space for each reduced operad $\mathcal{P}$.
\end{prop}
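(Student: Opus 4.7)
The plan is to use the standard skeletal filtration of $\mathbb{P}^\ast(X)$. For $n\ge 0$, let $F_n\mathbb{P}^\ast(X)$ be the image in $\mathbb{P}^\ast(X)$ of $\coprod_{k\le n}\mathcal{P}(k)\times_{\Sigma_k}X^k$. Since $\mathcal{P}$ is reduced, $F_0\mathbb{P}^\ast(X)=\{\ast\}$, and for $n\ge 1$ there is a pushout
$$
\xymatrix{
\mathcal{P}(n)\times_{\Sigma_n} W_n X \ar[r]\ar[d] & F_{n-1}\mathbb{P}^\ast(X) \ar[d]\\
\mathcal{P}(n)\times_{\Sigma_n} X^n \ar[r] & F_n\mathbb{P}^\ast(X),
}
$$
where $W_n X=\{(x_1,\ldots,x_n)\in X^n: x_i=\ast\ \textrm{for some}\ i\}$ is the fat wedge. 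Wellpointedness of $X$ and the pushout–product theorem yield a $\Sigma_n$-equivariant closed cofibration $W_nX\hookrightarrow X^n$; an equivariant Str\o m structure descends to a Str\o m structure after applying $\mathcal{P}(n)\times_{\Sigma_n}(-)$, so the left vertical is an h-cofibration. Thus Corollary~\ref{3_5} reduces the induction to showing that each $\mathcal{P}(n)\times_{\Sigma_n}X^n$ is a Dold space. Granted this, Proposition~\ref{3_7}(2) gives that $\mathbb{P}^\ast(X)=\colim F_n$ is Dold.

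The main obstacle is the Dold-ness of $\mathcal{P}(n)\times_{\Sigma_n}X^n$, because no hypothesis is placed on $\mathcal{P}(n)/\Sigma_n$. I would bypass this by constructing a Dold cover directly on the quotient, exploiting that the basepoint $(\ast,\ldots,\ast)\in X^n$ is $\Sigma_n$-fixed. Fix a Dold cover $\{U_\lambda\}_{\lambda\in\Lambda}$ of $X$ with numeration $\{f_\lambda\}$; by path-connectedness of $X$, each ambient nullhomotopy of $U_\lambda$ in $X$ may be extended to terminate at $\ast$. For each orbit $[\bar\lambda]\in\Lambda^n/\Sigma_n$ put
$$
\widetilde V_{[\bar\lambda]} \;=\; \bigcup_{\sigma\in\Sigma_n} U_{\lambda_{\sigma(1)}}\times\cdots\times U_{\lambda_{\sigma(n)}}\;\subset\; X^n.
$$
This set is $\Sigma_n$-invariant, and the coordinate-wise contraction to $(\ast,\ldots,\ast)$ --- glued across overlaps of the various $U_{\lambda_{\sigma(i)}}$ using the partition of unity $\{f_\lambda\}$ --- is $\Sigma_n$-equivariant because its target is $\Sigma_n$-fixed. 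Hence it descends to an ambient nullhomotopy of the image $V_{[\bar\lambda]}$ in $\mathcal{P}(n)\times_{\Sigma_n}X^n$. The cover $\{V_{[\bar\lambda]}\}$ is numerated by the $\Sigma_n$-invariant functions $g_{[\bar\lambda]}(x_1,\ldots,x_n)=\sum_\sigma f_{\lambda_{\sigma(1)}}(x_1)\cdots f_{\lambda_{\sigma(n)}}(x_n)$, which descend to the quotient and inherit local finiteness from $\{f_\lambda\}$. Corollary~\ref{2_8} then produces the desired Dold structure on $\mathcal{P}(n)\times_{\Sigma_n}X^n$.

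The hard part will be the equivariant bookkeeping: verifying that the glued contraction on $\widetilde V_{[\bar\lambda]}$ is genuinely continuous and $\Sigma_n$-equivariant, and that the quotient pair $(\mathcal{P}(n)\times_{\Sigma_n}X^n,\mathcal{P}(n)\times_{\Sigma_n}W_nX)$ is indeed cofibered even when $\Sigma_n$ does not act freely on $\mathcal{P}(n)$. Once these technical points are checked, the induction is formal and the conclusion follows from Proposition~\ref{3_7}(2).
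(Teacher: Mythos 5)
Your reduction to the spaces $\mathcal{P}(n)\times_{\Sigma_n}X^n$ is where the argument breaks down: these spaces need \emph{not} be Dold spaces, and your direct construction of a Dold cover for them does not work. The coordinatewise contraction of $\widetilde V_{[\bar\lambda]}$ to $(\ast,\ldots,\ast)$ descends to a homotopy from the inclusion of $V_{[\bar\lambda]}$ to a map landing in $\mathcal{P}(n)\times_{\Sigma_n}\{(\ast,\ldots,\ast)\}\cong\mathcal{P}(n)/\Sigma_n$, which is not a point; so this is not a nullhomotopy unless one further contracts inside $\mathcal{P}(n)/\Sigma_n$, which requires exactly the hypothesis you are trying to avoid. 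A concrete counterexample: take the reduced operad with $\mathcal{P}(0)=\ast$, $\mathcal{P}(1)=\mathbb{Q}$ (a topological monoid under addition), $\mathcal{P}(n)=\emptyset$ for $n\ge 2$, and $X=S^1$. Then $\mathcal{P}(1)\times X=\mathbb{Q}\times S^1$ has path-components $\{q\}\times S^1$ which are not open, so by Proposition \ref{3_1} it is not a Dold space --- yet $\mathbb{P}^\ast(S^1)=\mathbb{Q}_+\wedge S^1$ \emph{is} one, because the troublesome subspace $\mathbb{Q}\times\{\ast\}$ gets collapsed by the basepoint identifications. This is the whole point of the proposition: the contrast with Proposition \ref{5_3}, where the hypothesis that each $\mathcal{P}(n)/\Sigma_n$ be Dold is explicitly required, signals that the based identifications must be exploited \emph{before} one can produce ambient contractions, so no induction whose intermediate stages are the unquotiented spaces $\mathcal{P}(n)\times_{\Sigma_n}X^n$ can succeed as you have set it up. (The filtration itself, the pushout description, and the h-cofibration claim are fine; one could in principle build Dold covers directly on the filtration stages $F_n$, which is essentially what Meiwes did, but that requires carrying the identifications with $F_{n-1}$ along at every step and is far more delicate than your sketch.)

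The paper sidesteps all of this: replace $X$ by the whiskered space $X_I$ of \ref{4_3a}, use \ref{4_14} to write $X_I$ up to based homotopy equivalence as $|N_\ast\Omega_MX_I|$ --- the realization of a proper simplicial space whose space of $0$-simplices is a single point --- then commute $\mathbb{P}^\ast$ past the realization by May's theorem \ref{5_5}. Since $\mathcal{P}$ is reduced, $\mathbb{P}^\ast(\ast)=\ast$, so the simplicial space $\mathbb{P}^\ast(N_\ast\Omega_MX_I)$ has a point in degree $0$ and Proposition \ref{4_1} applies; the operad spaces are absorbed into higher simplicial degrees, where \ref{4_1} imposes no condition. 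This is where path-connectedness (needed for \ref{4_14}) and wellpointedness (needed for properness and for $q:X_I\to X$ to be a based equivalence) actually enter. I would recommend reworking your proof along these lines rather than trying to repair the equivariant cover construction.
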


The proofs make use of the following result of May \cite[Thm. 12.2]{May}:

\begin{prop}\label{5_5}
\begin{enumerate}
\item Let $X_\bullet$ be a simplicial $k$-space, then there is a natural
homeomorphism $|\mathbb{P}(X_\bullet)|\to\mathbb{P}(|X_\bullet|)$.
\item Let $X_\bullet$ be a wellpointed simplicial $k$-space and $\mathcal{P}$ a
reduced operad. Then there is a natural homemorphism
$|\mathbb{P}^\ast(X_\bullet)|\to\mathbb{P}^\ast(|X_\bullet|)$.
\end{enumerate}
\end{prop}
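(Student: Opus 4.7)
The plan is to reduce everything to two standard facts about geometric realization in the category $k\Top$: first, that $|\cdot|$ is left adjoint to the singular functor and therefore preserves all small colimits; and second, that $|\cdot|$ commutes with finite products up to natural homeomorphism, i.e.\ $|X_\bullet\times Y_\bullet|\cong|X_\bullet|\times|Y_\bullet|$ for simplicial $k$-spaces. The second is the classical Milnor-type theorem and is the reason the statement is formulated in $k\Top$ rather than $\Top$ (where one would only obtain a continuous bijection in general).

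For part (1), I would write
$$
\mathbb{P}(X_\bullet)=\coprod_{n\ge 0}\mathcal{P}(n)\times_{\Sigma_n} X_\bullet^n,
$$
viewing each $\mathcal{P}(n)$ as a constant simplicial $k$-space. Each of the operations involved in this formula, namely the coproduct over $n$, the product with the constant simplicial space $\mathcal{P}(n)$, the iterated product $X_\bullet^n$, and the passage to $\Sigma_n$-orbits (a coequalizer), is preserved by $|\cdot|$ by one of the two facts above. Assembling these yields the natural homeomorphism $|\mathbb{P}(X_\bullet)|\cong\mathbb{P}(|X_\bullet|)$; the only non-formal input is the product compatibility $|X_\bullet^n|\cong|X_\bullet|^n$, obtained by iterating the product theorem.

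For part (2), $\mathbb{P}^\ast(X_\bullet)$ is the quotient of $\mathbb{P}(X_\bullet)$ by the relations $(\sigma_i(\alpha),x)\sim(\alpha,s_i(x))$ coming from the insertion of the unary element of $\mathcal{P}(0)$. This quotient is again a coequalizer in each simplicial degree, and so is preserved by $|\cdot|$ by the same reasoning as in (1). The wellpointedness assumption enters to guarantee that the degeneracy-like inclusions $s_i$ are dimensionwise closed cofibrations, so that the quotient topology on each $\mathbb{P}^\ast(X_\bullet)_n$ is well-behaved and the resulting canonical continuous bijection after realization is indeed a homeomorphism (a hypothesis which would fail without wellpointedness).

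The main obstacle is precisely the product compatibility $|X_\bullet\times Y_\bullet|\cong|X_\bullet|\times|Y_\bullet|$ in $k\Top$: this is the technical heart of the argument, and is why we are forced into the $k$-space setting. Once this is in hand together with the adjointness of $|\cdot|$, the proposition is a formal consequence of the fact that $\mathbb{P}$ and $\mathbb{P}^\ast$ are built entirely out of colimits and finite products.
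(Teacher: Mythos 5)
Your argument is correct and is in substance the proof the paper relies on: the paper does not reprove this statement but simply cites May \cite[Thm.~12.2]{May}, whose proof is exactly the reduction you describe --- geometric realization of simplicial $k$-spaces preserves colimits (being a left adjoint) and finite products, while $\mathbb{P}$ and $\mathbb{P}^\ast$ are assembled from coproducts, products with the constant simplicial spaces $\mathcal{P}(n)$, iterated products $X_\bullet^n$, $\Sigma_n$-quotients, and one further coequalizer. One small correction: the homeomorphism in (2) is just as formal a colimit-commutation statement as (1) and does not actually require the insertions $s_i$ to be closed cofibrations; the wellpointedness hypothesis is a standing assumption inherited from May, and what it buys in this paper is the \emph{properness} of $\mathbb{P}^\ast(X_\bullet)$ (used subsequently in the proofs of \ref{5_3} and \ref{5_4}), not the fact that the canonical continuous bijection is a homeomorphism.
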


May proves the based case, but the proof applies verbatim also to the non-based case.

\textbf{Proof of \ref{5_3}:} Choose a basepoint $x_0\in X$ and let
$q:X_I\to X$ be the map of \ref{4_3a}. Since $X_I$ is a
wellpointed Dold space the map
$$
v\circ|\alpha_\ast|:|N_\ast\Omega_MX_I|\to X_I
$$
of \ref{4_14} is a based
homotopy equivalence. By \ref{5_5} we have a sequence of homotopy
equivalences (we ignore basepoints)
$$
|\mathbb{P}(N_\ast\Omega_MX_I)|\to\mathbb{P}(|N_\ast\Omega_MX_I|)\to\mathbb{P}(X_I)
\to \mathbb{P}(X).
$$
$N_0\Omega_MX_I$ is a single point. Hence
$$
\mathbb{P}(N_0\Omega_MX_I)=\mathbb{P}(\ast)=\coprod\limits^\infty_{n=0}\mathbb{P}(n)
/\Sigma_n
$$
which is a Dold space. Hence $|\mathbb{P}(N_\ast\Omega_MX_I)|$ and $\mathbb{P}(X)$
are Dold spaces by \ref{4_1}, because $\mathbb{P}(N_\ast\Omega_MX_I)$ is proper.
\hfill$\square$
\vspace{2ex}

\textbf{Proof of \ref{5_4}:}
If $X$ is wellpointed the map $q:X_I\to X$ is a based homotopy equivalence. We
obtain a sequence of based homotopy equivalences
$$
|\mathbb{P}^\ast(N_\ast\Omega_MX_I)|\to \mathbb{P}^\ast(|N_\ast\Omega_MX_I|) \to
\mathbb{P}^\ast(X_I)\to \mathbb{P}^\ast(X).
$$
Since $\mathbb{P}^\ast(\ast)=\ast$, all spaces are Dold spaces by \ref{4_1}, because
$
\mathbb{P}^\ast(N_\ast\Omega_MX_I)$ is proper.
\hfill$\square$

\begin{coro}\label{5_6}
Let $X\in k\Top$ be a Dold space. Then $SP_k(X)$ is a Dold space.
\end{coro}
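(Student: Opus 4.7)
The plan is to realize $SP_k(X)$ as a summand of the free algebra $\mathbb{C}\mathrm{om}(X)$ over the commutative operad $\mathcal{C}\mathrm{om}$ with $\mathcal{C}\mathrm{om}(n)=\ast$ for all $n\ge 0$. By definition,
$$
\mathbb{C}\mathrm{om}(X)\;=\;\coprod_{n=0}^{\infty}\mathcal{C}\mathrm{om}(n)\times_{\Sigma_n}X^n\;=\;\coprod_{n=0}^{\infty}SP_n(X),
$$
so by Proposition \ref{3_1}(2) it suffices to prove that $\mathbb{C}\mathrm{om}(X)$ is a Dold space.

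First I would treat the case that $X$ is path-connected. Since each orbit space $\mathcal{C}\mathrm{om}(n)/\Sigma_n=\ast$ is trivially a Dold space, Proposition \ref{5_3} applies verbatim and gives that $\mathbb{C}\mathrm{om}(X)$ is a Dold space. Proposition \ref{3_1}(2) then yields that each summand $SP_k(X)$ is a Dold space.

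For general $X$, I would reduce to the path-connected case. By Proposition \ref{3_1}, decompose $X=\coprod_{j\in J}X_j$ into its (open) path-components, each $X_j$ a path-connected Dold space. Distributing the $k$-fold cartesian product over the coproduct and then dividing out by the permutation action of $\Sigma_k$ produces a canonical homeomorphism
$$
SP_k(X)\;\cong\;\coprod_{\substack{(n_j)_{j\in J}\\ \sum_j n_j=k}}\prod_{j\in J}SP_{n_j}(X_j),
$$
where in each summand only finitely many $n_j$ are nonzero (since $k<\infty$), and $SP_0(X_j)$ is a one-point space. By the path-connected case every factor $SP_{n_j}(X_j)$ is a Dold space; by Corollary \ref{3_10} (and induction) each finite product in the decomposition is a Dold space; and by Proposition \ref{3_1}(2) the disjoint union is a Dold space. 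Hence $SP_k(X)$ is Dold. All of these steps remain valid in $k\Top$ by Proposition \ref{5_2}.

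The only real content lies in the path-connected case, which is handled by invoking Proposition \ref{5_3}; the reduction to path-components is purely combinatorial bookkeeping, so I do not expect any genuine obstacle.
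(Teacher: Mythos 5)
Your proof is correct and follows essentially the same route as the paper's: reduce to path-connected $X$ via the decomposition of $SP_k$ of a coproduct into a coproduct of finite products (using \ref{3_1} and \ref{3_10}), then apply Proposition \ref{5_3} to the commutative operad and extract $SP_k(X)$ as a summand of $\coprod_n SP_n(X)$ via \ref{3_1}(2). The only difference is that you spell out the bookkeeping of the reduction in more detail than the paper does.
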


\begin{proof}
Let $\{X_\alpha; \alpha\in A\}$ be the set of path-components of $X$. Then $SP_k(X)$
is the disjoint union of spaces
$$
SP_{r_1}(X_{\alpha_1})\times\ldots\times SP_{r_q}(X_{\alpha_q}), \qquad
r_1+\ldots+r_q=k.
$$
By \ref{3_1} and \ref{3_10} it suffices to prove the result for path-connected $X$.

Let $\mathcal{C}om$ be the operad for commutative monoid structures, i.e.
$\mathcal{C}om(n)$ is a single point for each $n$. Then
$$
\mathcal{C}om(X)=\coprod\limits_n SP_n(X)
$$
is a Dold space by \ref{5_3}. So $SP_n(X)$ is a Dold space by \ref{3_1}.
\end{proof}

\section{Counter examples}
\begin{prop}\label{6_1}
There are Dold spaces which are not of the homotopy type of a $CW$-complex.
\end{prop}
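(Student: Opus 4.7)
The plan is to exhibit one concrete Dold space which is not of $CW$-homotopy type. Following the hint in the acknowledgements, the candidate I would take is the unreduced suspension $X := \widehat{\Sigma} H$ of the Hawaiian earring
$$
H := \bigcup_{n \geq 1} C_n \subset \mathbb{R}^2, \qquad C_n = \textrm{circle of radius } 1/n \textrm{ through the origin},
$$
with its subspace topology.

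That $X$ is a Dold space is immediate from Corollary \ref{3_6}(1), which applies to the unreduced suspension of an arbitrary space: concretely, the two open cones obtained by deleting one cone-point each form a numerable ambiently contractible cover of $X$ of cardinality two, with partition of unity coming from the cone coordinate. So the entire content of the proposition lies in showing that $X$ is not of $CW$-homotopy type. The argument I would give runs as follows. Since $H$ is path-connected, $X$ is simply connected, and since $H$ is one-dimensional the suspension isomorphism $\tilde H_n(X) \cong \tilde H_{n-1}(H)$ forces $\tilde H_n(X) = 0$ for $n \geq 3$ and $\tilde H_2(X) \cong H_1(H)$, an uncountable abelian group by the classical calculation of Griffiths. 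Hence any $CW$-complex $K$ with $K \simeq X$ would necessarily be a Moore space $M(H_1(H),2)$. Into this apparent conclusion I would then feed the Barratt--Milnor analysis of $\pi_*(\widehat{\Sigma} H)$, which produces homotopy classes $S^n \to X$ that essentially involve all the circles $C_k$ simultaneously; no such class can be represented in a Moore space because any map from a sphere into a $CW$-complex factors through a finite subcomplex. This contradicts the existence of the hypothetical $CW$-model.

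The principal obstacle is of course the non-$CW$-type step. The Dold property is essentially formal once one invokes \ref{3_6}, but ruling out $CW$-homotopy type requires importing nontrivial external input on the pathological higher homotopy of $\widehat{\Sigma} H$; there is no purely internal argument visible from the methods of the preceding sections. In a written-up proof I would therefore prove the Dold-cover assertion in detail and, for the non-$CW$-type assertion, cite the Barratt--Milnor results (and possibly Eda's subsequent refinements) rather than reprove them.
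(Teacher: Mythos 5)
Your choice of example and the appeal to Corollary \ref{3_6}(1) for the Dold property are fine and match the paper in spirit (the paper uses the even simpler space $\widehat{\Sigma}Y$ with $Y=\{0\}\cup\{1/n;\ n\in\mathbb{N}\}$, a convergent sequence rather than the Hawaiian earring). The gap is in the non-$CW$-type step, which is where all the content lies. Your reduction to a Moore space $M(H_1(H),2)$ proves nothing by itself: Moore spaces $M(G,2)$ exist as $CW$-complexes for every abelian group $G$, uncountable or not, so no contradiction has appeared at that point. The step that is supposed to finish the proof --- ``no such class can be represented in a Moore space because any map from a sphere into a $CW$-complex factors through a finite subcomplex'' --- is not an argument: every element of every homotopy group of every $CW$-complex is represented by a map from a sphere factoring through a finite subcomplex, so this excludes nothing. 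Moreover the Barratt--Milnor results you invoke concern the higher-dimensional Hawaiian earrings (shrinking clusters of $k$-spheres, $k\ge 2$) and their anomalous singular homology, not $\pi_*(\widehat{\Sigma}H)$; even granting exotic homotopy in $\widehat{\Sigma}H$, you never extract a concrete invariant that distinguishes it from the hypothetical $CW$-model.

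The finite-subcomplex observation does the whole job, but it must be applied to a map \emph{out of} your compact candidate space, not out of spheres. If $f:\widehat{\Sigma}H\to K$ is any map to a $CW$-complex, compactness of $\widehat{\Sigma}H$ forces $f$ to land in a finite subcomplex $A\subset K$, so $f_*$ on $H_2$ factors through the finitely generated group $H_2(A)$; were $f$ a homotopy equivalence, $f_*$ would be injective and $H_2(\widehat{\Sigma}H)\cong H_1(H)$, which is uncountable, would embed in a finitely generated abelian group --- a contradiction. This is precisely the paper's argument, run there with $Y=\{0\}\cup\{1/n\}$ and $H_1$ in place of $H$ and $H_2$: the non-finite-generation of $H_1(\widehat{\Sigma}Y)$ is seen elementarily from the retracts $B_n\simeq\bigvee_{i=1}^{n}S^1$, with no need for Griffiths' computation of $H_1(H)$ or for any suspension isomorphism for non-$CW$ spaces.
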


The following example was brought to our attention by A. Hatcher \cite{Hat}.

Let $Y=\{\frac{1}{n};\ n\in\mathbb{N}\}\cup\{0\}\subset\mathbb{R}$ and let
$\widehat{\Sigma} Y$ be the
unreduced suspension of $Y$. Then $\widehat{\Sigma} Y$ is a Dold space. Let 
$f:\widehat{\Sigma} Y\to X$
be any map into a $CW$-complex. Since $\widehat{\Sigma} Y$ is compact, 
$f$ factors through a
finite subcomplex $A\subset X$. Hence $f_\ast: H_1(\widehat{\Sigma} Y)\to H_1(X)$
factors through $H_1(A)$. Let $B_n$ be the unreduced suspension of
$\{0\}\cup \{\frac{1}{i};\ 1\leq i\leq n\}$. Then $B_n$ is a retract of
$\widehat{\Sigma} Y$ and $H_i(B_n)\cong \mathbb{Z}^n$. Hence 
$H_1(\widehat{\Sigma} Y)$ is not 
finitely generated, but $H_1A$
is. So the map $f$ cannot be a homotopy equivalence.

\begin{coro}\label{6_2}
There are weak homotopy equivalences between Dold spaces which are not homotopy
equivalences.
\end{coro}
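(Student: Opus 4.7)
The plan is to deduce Corollary \ref{6_2} directly from Proposition \ref{6_1} by invoking CW-approximation. Take $X = \widehat{\Sigma} Y$, the unreduced suspension from the proof of \ref{6_1}, which is a Dold space not of the homotopy type of any $CW$-complex.

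Now choose a CW-approximation $f: X' \to X$, i.e.\ a $CW$-complex $X'$ together with a weak homotopy equivalence $f$; such an approximation exists for any space. By Corollary \ref{3_8}, the $CW$-complex $X'$ is a Dold space. Thus both source and target of $f$ are Dold spaces, and $f$ is a weak homotopy equivalence.

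It remains to observe that $f$ cannot be a genuine homotopy equivalence. Indeed, if $f$ were a homotopy equivalence, then $X = \widehat{\Sigma} Y$ would have the homotopy type of the $CW$-complex $X'$, contradicting Proposition \ref{6_1}. Hence $f$ is the desired example.

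There is no real obstacle here; the whole corollary is a one-line consequence of \ref{6_1} together with the existence of CW-approximation and the fact (\ref{3_8}) that $CW$-complexes are Dold. The only conceptual point worth highlighting is that the property ``Dold space'' is preserved under CW-approximation on the source side, which is exactly what makes \ref{6_1} sharp enough to yield \ref{6_2}.
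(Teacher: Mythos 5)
Your proposal is correct and coincides with the paper's own argument: the paper likewise takes a $CW$-approximation $g:R\widehat{\Sigma}Y\to\widehat{\Sigma}Y$ of the space from Proposition \ref{6_1}, notes it is a weak equivalence between Dold spaces (the source by \ref{3_8}), and concludes it cannot be a homotopy equivalence since $\widehat{\Sigma}Y$ is not of $CW$ homotopy type. No differences worth noting.
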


\textbf{Example:}
Let $g:R\widehat{\Sigma} Y\to \widehat{\Sigma} Y$ be a $CW$-approximation  of 
$\widehat{\Sigma} Y$ of the previous
example. Then $g$ is a weak equivalence but not a homotopy equivalence.

\begin{prop}\label{6_3}
The loop space of a Dold space need not be a loop space.
\end{prop}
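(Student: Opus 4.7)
The plan is to use $X=\widehat{\Sigma} Y$ with $Y=\{0\}\cup\{1/n:n\ge 1\}\subset\mathbb{R}$ the convergent sequence of Example \ref{6_1}. By Corollary \ref{3_6}(1), $X$ is a Dold space. I will show that $\Omega X$ is not, by producing a sequence of based loops that converges in $\Omega X$ but whose terms lie in different path-components than the limit. Since Proposition \ref{3_1} forces path-components of any Dold space to be open, this will give a contradiction.

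Let $N,S$ be the two cone points of $X$, take $N$ as basepoint, and fix a reference $y_0 = 1\in Y$. For each $y\in Y$ define $\lambda_y\in\Omega X$ to be the loop at $N$ which descends the meridian through $y$ from $N$ to $S$ on $[0,1/2]$ and ascends the meridian through $y_0$ from $S$ to $N$ on $[1/2,1]$. The explicit formula exhibits $(y,t)\mapsto\lambda_y(t)$ as a continuous map $Y\times I\to X$, so by exponential adjunction $y\mapsto\lambda_y$ is continuous from $Y$ into $\Omega X$; in particular $\lambda_{1/n}\to\lambda_0$ in $\Omega X$.

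To see that these loops lie in distinct path-components of $\Omega X$, apply Mayer--Vietoris to the decomposition $X=CY\cup_Y CY$ of $X$ into its two cones: this yields $H_1(X)\cong\tilde H_0(Y)$, a free abelian group with basis indexed by $\pi_0(Y)\setminus\{[y_0]\}$. Under the Hurewicz homomorphism, $\lambda_y$ maps to $[y]-[y_0]$, as one checks by factoring through the subsuspension $\widehat{\Sigma}\{y_0,y\}\simeq S^1\hookrightarrow X$. Since $[1/n]\ne[0]$ in $\pi_0(Y)$, the classes $[\lambda_{1/n}]\in\pi_1(X,N)$ are distinct from $[\lambda_0]$. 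Hence the path-component of $\lambda_0$ in $\Omega X$ cannot be open, because every neighborhood of $\lambda_0$ contains $\lambda_{1/n}$ for all large $n$, which lie in a different component. By Proposition \ref{3_1}, $\Omega X$ is not a Dold space. The only slightly delicate point is the Hurewicz computation distinguishing the classes; continuity of $y\mapsto\lambda_y$, the convergence $\lambda_{1/n}\to\lambda_0$, and the final appeal to Proposition \ref{3_1} are all routine.
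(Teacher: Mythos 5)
Your proof is correct, but it takes a genuinely different route from the paper. The paper's example is the classifying space of the topological group $(\mathbb{Q},+)$: the fat realization $\Vert N_\bullet\mathbb{Q}\Vert$ is a Dold space by Proposition \ref{4_1} (its $0$-skeleton is a point), while its loop space is homotopy equivalent to $\mathbb{Q}$ itself, which is not a Dold space because its path-components are singletons and not open. You instead reuse the space $\widehat{\Sigma}Y$ of Example \ref{6_1} and argue directly inside $\Omega\widehat{\Sigma}Y$: the currying direction of the exponential law gives continuity of $y\mapsto\lambda_y$ without any local compactness hypothesis, so $\lambda_{1/n}\to\lambda_0$; the Mayer--Vietoris identification $H_1(\widehat{\Sigma}Y)\cong\tilde H_0(Y)$ (consistent with the computation of $H_1(B_n)$ already in Section 6) together with Hurewicz separates the classes $[\lambda_{1/n}]$ and $[\lambda_0]$ in $\pi_1$, hence the loops lie in distinct path-components of $\Omega\widehat{\Sigma}Y$ and Proposition \ref{3_1} applies. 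What the paper's argument buys is brevity, at the price of invoking the standard but unproved-here equivalence $G\simeq\Omega\Vert N_\bullet G\Vert$ for a topological group $G$; what yours buys is a completely elementary, self-contained counterexample that needs nothing beyond Corollary \ref{3_6}, Proposition \ref{3_1}, and a routine homology computation, and it has the aesthetic advantage of showing that the already-introduced Dold space $\widehat{\Sigma}Y$ fails to have a Dold loop space. (Both proofs of course establish the evidently intended statement that the loop space of a Dold space need not be a \emph{Dold} space.)
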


\textbf{Example:}
$\mathbb{Q}$ with the subspace topology of $\mathbb{R}$
is not a Dold space. Let $N_\bullet\mathbb{Q}$ denote the nerve of
$(\mathbb{Q},+)$. Since $(\mathbb{Q},+)$ is a 
topological group there is a homotopy equivalence
$\mathbb{Q}\to\Omega||N_\bullet\mathbb{Q}||$. Hence 
$\Omega||N_\bullet\mathbb{Q}||$ is not a Dold space but 
$||N_\bullet\mathbb{Q}||$ is one.
\hfill$\square$

\vspace{2ex}
Recall that a \textit{closed class} $\mathcal{C}$ in the sense of Dror Farjoun is a
full subcategory of the category $S_\ast$ of wellpointed spaces of the homotopy
type of a $CW$-complex which is closed under homotopy equivalences and based
homotopy colimits \cite[D1]{Dror}.

The class of wellpointed Dold spaces is closed under homotopy equivalences and
based homotopy  colimits, but Dror Farjoun's results do not generalize to this
class.

\begin{exa}\label{6_4}
Let $F\to E\to B$ be a fibration sequence with path-connected $B$. If $F$ and $E$ are
in a closed class $\mathcal{C}$, then so is $B$ \cite[D. 11]{Dror}.

This does not hold for Dold spaces. Consider the based path-space fibration
$$
\Omega C\to PC\to C
$$
over the polish circle $C$ with a nice point $c_0\in C$. It is well-known that
$\Omega C$ and $PC$ are contractible and hence Dold spaces, but $C$ is not a Dold
space.
\end{exa}

\end{document}